\documentclass[11pt,a4paper]{article}
\usepackage[utf8]{inputenc}
\usepackage[T1]{fontenc}
\usepackage{lmodern}
\usepackage{a4wide} 
\usepackage{graphicx} 

\usepackage{amsmath}
\usepackage{amsfonts}
\usepackage{amssymb}
\usepackage{amsthm}
\usepackage[numbers]{natbib}
\usepackage{color}

\usepackage{enumerate} 
\usepackage{hyperref}
\usepackage{todonotes} 

\usepackage{fourier} 

\theoremstyle{plain}
\newtheorem{thm}{Theorem}[section]  
\newtheorem{prop}[thm]{Proposition}
\newtheorem{cor}[thm]{Corollary}
\newtheorem{lem}[thm]{Lemma}

\theoremstyle{definition}

\theoremstyle{remark}
\newtheorem*{rem}{Remark}

\numberwithin{equation}{section}

\DeclareMathOperator{\integers}{\mathbb{Z}}
\DeclareMathOperator{\reals}{\mathbb{R}}

\DeclareMathOperator{\nat}{\mathbb{N}}

\DeclareMathOperator{\Prob}{\mathbb{P}}

\DeclareMathOperator{\ind}{\mathbf{1}}

\newcommand{\norm}[1]{\left\|#1\right\|}


\newcommand{\set}[1]{\left\{ #1 \right\}}

\newcommand{\Latd}{\mathbb{Z}^d}

\title{Law of large numbers for random walks\\ on attractive spin-flip dynamics}


\author{Stein Andreas Bethuelsen \footnotemark[1]
\\  Markus Heydenreich \footnotemark[2] }

\footnotetext[1]{Technische Universit\"at M\"unchen, Fakult\"at f\"ur Mathematik, Boltzmannstr. 3, 85748 Garching, Germany. Email: stein.bethuelsen@tum.de}
\footnotetext[2]{Mathematisches Institut, Universit\"at M\"unchen, Theresienstr. 39, 80333 M\"unchen, Germany. Email: m.heydenreich@lmu.de}





\begin{document}

\maketitle

\begin{abstract}
We prove a law of large numbers for certain random walks on certain attractive dynamic random environments when initialised from all sites equal to the same state. This result applies to random walks on $\Latd$ with $d\geq1$. We further provide sufficient mixing conditions under which the assumption on the initial state can be relaxed, and obtain estimates on the large deviation behaviour of the random walk. 

As prime example we study the random walk on the contact process, for which we obtain a law of large numbers in arbitrary dimension. For this model, further properties about the speed are derived.
\end{abstract}

\vspace{0.5cm}
\emph{MSC2010.} Primary 82C41; Secondary 82C22, 60K37, 60F10, 60F15, 39B62.\\
\emph{Key words and phrases.} Random walks, dynamic random environments, strong law of large numbers, large deviation estimates, coupling, monotonicity, sub-additivity, contact process.

\bigskip



\section{Introduction and main results} 
\subsection{Background and outline}
Random walks in random environment (RWRE) gained much interest throughout the last de\-cades. Such models serve as natural extensions of the classical random walk model and have broad applications in physics, chemistry and biology.

RWRE models show significantly different behaviours than the simple random walk model. This was already observed in one of the first models studied, \citet{SolomonRWRE1975}, where it was shown that the random walk can behave sub-ballistically. Non-Gaussian scaling limits were established for the same model in \citet*{KestenKozlovSpitzerRWRE1975} and \citet{SinaiRWRE1982}. These characteristics are due to trapping phenomena.

RWRE models on $\integers$ are by now well understood in great generality whenever the environment is static, i.e.\ it does not change with time. On the other hand, for RWRE models on $\integers^d$, $d\geq 2$, the analysis of trapping phenomena becomes much more delicate and less is known.
See for instance \citet{ZeitouniRWRE2004} or \citet{SznitmanRWRE2004} for an overview of results, and \citet{DrewitzRamirezRWRE2013} for a monograph with focus on recent developments.\newpage

In the last decade, much focus has been devoted to models where the random environment evolves with time, i.e.\ random walks in \emph{dynamic} random environments (RWDRE). It is believed that the extent to which trapping phenomena occur for RWDRE models depends on the correlation structure of the dynamics.

At a rigorous level, it is known to great generality that RWDRE models scale diffusively when the environment is only weakly correlated in space-time; see for instance
\citet{RedigVolleringRWDRE2013}. These results are not restricted to random walks on $\integers$, but are valid in any dimension. Here weakly correlated essentially means that the environment becomes approximately independent of its starting configuration within a space-time cone, also known as cone mixing environment.\

Little is known at a general level when the environment has a non-uniform correlation structure, though trapping phenomena are conjectured to occur for some specific models (\citet{AvenaThomannRWDREsimulations2012}).  \citet*{AvenaHollanderRedigRWDRELDP2009} have shown rigorously that a random walk on the one-dimensional exclusion process exhibits trapping phenomena at the level of large deviations under drift assumptions. On the other hand, several other models with a non-uniform correlation structure have been shown to possess diffusive scaling limits, for example 
\citet*{AvenaSantosVolleringRWDRESSEP2013}, \citet*{HilarioHollanderSidoraviciusSantosTeixeiraRWRW2014},  
den Hollander and dos Santos \cite{HollanderSantosRWCP2013}, \citet{HuveneersSimenhausRWSEP2014} and
 \citet{MountfordVaresDCP2013}.

In this paper we present a strong law of large numbers (SLLN) for random walks on certain attractive (or monotone) interacting particle system (IPS). For this, restrictions on both the random walk and the IPS are required. In particular, we assume that the sites of the IPS take values $0$ or $1$ and that the IPS has a graphical representation coupling which is monotone 
with respect to the initial configuration. One class of IPS satisfying the latter assumption are additive and attractive spin-flip systems.

The SLLN is obtained when the IPS is initialised at time $0$ from a  configuration where all sites have the same value, assuming in addition that the jump transitions of the random walk  only depend on the state of the IPS at the position of the random walk. Under certain mixing conditions, we are able to relax the restriction on the starting configuration. 

An important feature of the SLLN is that it does not rely directly on the correlation structure of the environment, but rather assumes monotonicity. In particular, the SLLN applies to a large class of models with non-uniform correlation structure not previously  considered in the literature.
Furthermore, the SLLN applies to random walks on $\mathbb{Z}^d$ for any $d\geq1$ and is not restricted to nearest neighbour jumps.

We also provide large deviation estimates for the random walk. In particular, we show that no trapping phenomena occur for our model at the level of large deviations throughout the cone mixing regime.

The supercritical contact process is an example of an IPS satisfying the above requirements and having non-uniform correlation structure. For the random walk on this process, we prove the SLLN throughout the supercritical regime when started from the upper invariant measure (as well as many other initial configurations). Further properties about the speed are also derived. These results extend upon the SLLN obtained in \cite{HollanderSantosRWCP2013} beyond the one-dimensional nearest-neighbour setting.

\subsubsection*{Outline}
The rest of the paper is organised as follows. In the next subsection we give a precise definition of our model and in Subsection \ref{sec results Markus} we present our main results. Subsection \ref{sec disc} contains a discussion of related literature.
Section \ref{sec construct} is devoted to a particular coupling construction of the environment and the random walk, yielding a monotonicity property, important for our results. In Section \ref{sec proofs Markus} we present the proofs of our main theorems for general attractive environments. Section \ref{sec contact Markus} is devoted to the special case of a random walk on the contact process.

\subsection{The model}
We first introduce the environment. For this, let $d\geq1$ and denote by $\Omega = \set{0,1}^{\mathbb{Z}^d}$ the configuration space and by $D_{\Omega}[0,\infty)$ the corresponding path space, that is, the set of c\`adl\`ag functions on $[0,\infty)$ taking values in $\Omega$.

As the environment we consider an IPS,  $\xi = (\xi_t)_{t \geq 0}$, such that $\xi_t =\left\{\xi_t(x) \colon x \in \mathbb{Z}^d\right\}$ is in $\Omega$, $t \in [0,\infty)$, and $\xi \in D_{\Omega}[0,\infty)$. 
The process $\xi$ starting from $\xi_0=\eta$ is denoted by $\xi^{\eta}$ and its law is given by $P^{\eta}$. When $\xi_0$ is drawn from $\mu \in \mathcal{P}(\Omega)$, the set of probability measures on $\Omega$, we write $\xi^{\mu}$ for the corresponding process. Its law is denoted by $P^{\mu}$ and is given by
\begin{equation}
P^{\mu}(\cdot)= \int_{\Omega} P^{\eta}(\cdot)\mu(d\eta).
\end{equation}
We assume throughout that $\xi$ is translation invariant, that is,
\begin{equation}\label{eq stationarity}
P^{\eta}(\theta_x \xi_t \in \cdot) = P^{\theta_x \eta}( \xi_t \in \cdot)
\end{equation}
with $\theta_x$ denoting the shift operator $\theta_x\eta(y) =\eta(y-x)$, $\eta \in \Omega$.

Further, to the configuration space $\Omega$ we associate the partial ordering such that $\xi \leq \eta$ with $\xi,\eta \in \Omega$ if and only if $\xi(x) \leq \eta(x)$ for all $x\in \mathbb{Z}^d$. A function $f \colon \Omega \rightarrow \reals$ is called increasing if $\xi\leq\eta$ implies $f(\xi)\leq f(\eta)$.
For two measures $\mu_1,\mu_2 \in \mathcal{P}(\Omega)$, $\mu_2$ \emph{stochastically dominates} $\mu_1$, written $\mu_1 \leq \mu_2$, provided that
\begin{equation}
\int_{\Omega} f d\mu_1 \leq \int_{\Omega} f d\mu_2
\end{equation}
for all increasing continuous functions $f$ on $\Omega$. We denote by $\delta_{\bar{0}},\delta_{\bar{1}}\in \mathcal{P}(\Omega)$  the extremal measures which put all their weight on the configurations $\bar{1}$ and $\bar{0}$, respectively, where $\bar{i}(x)=i$ for all $x \in \mathbb{Z}^d$, $i \in \set{0,1}$. Obviously, it holds that $\delta_{\bar{0}} \leq \delta_{\bar{1}}$.

For a fixed realisation of $(\xi_t)_{t \geq 0}$, let $(W_t)_{t \geq 0}$ be the time-inhomogeneous Markov process on $\mathbb{Z}^d$ that, given $W_t=x$, jumps to $x+z$ at rate $\alpha(\xi_t(x),z)$ for some function $\alpha\colon\{0,1\}\times\mathbb Z^d\to[0,\infty)$. We call this process \emph{the random walk}. Further, we assume throughout that
\begin{equation}\label{eq jump rate walker}
\gamma:= \max_{i \in \{0,1\}} \left\{\alpha(i,o) + \sum_{z \in \mathbb{Z}^d} \norm{z}_1 \alpha(i,z) 
\right\} <\infty,
\end{equation}
where $o \in \mathbb{Z}^d$ denotes the origin.
 Thus, the speed  of the simple random walk seeing only occupied sites ($i=1$) or only vacant sites ($i=0$) is  given by the local drifts 
\begin{equation}\label{eq speed SRW}
u_i := \sum_{z \in \mathbb{Z}^d} \alpha(i,z) z, \quad i\in \set{0,1}.
 \end{equation} 
We say that $(W_t)$ is \emph{elliptic} if, for the unit vectors $\{\pm e_j\}_{j=1,\dots,d}$, we have
  \begin{equation}\label{eq elliptic}
  \min_{i \in \set{0,1}} \min_{j \in \{1,\dots,d\}} \left\{  \alpha(i,\pm e_j) \right\} >0.
  \end{equation} 
  We also say that $(W_t)$ has \emph{finite second moments} if
   \begin{equation}\label{eq finite second mom}
\max_{i \in \set{0,1}} \left\{ \sum_{z \in \mathbb{Z}^d} \alpha(i,z) \norm{z}_1^2 \right\} < \infty
\end{equation} and that it   
    has \emph{finite exponential moments} if there exist $\kappa>0$ such that,
 \begin{equation}\label{eq finite exp mom}
\max_{i \in \set{0,1}} \left\{ \sum_{z \in \mathbb{Z}^d} \alpha(i,z) \exp \left( \epsilon\norm{z}_1 \right) \right\} < \infty, \quad \text{ for all } 0< \epsilon < \kappa.
 \end{equation}
 
 Lastly, for $\xi \in D_{\Omega}[0,\infty)$ and $x\in \mathbb{Z}^d$, let $\Prob^{\xi}_{x}$ denote the law of $(W_t)$ starting from $W_0=x$ in a fixed environment $\xi$, which is the quenched law of $W$. The annealed law of $W$ is given by
 \begin{equation}
 \Prob_{\mu,x}(\cdot) = \int_{ D_{\Omega}[0,\infty) } \Prob_x^{\xi} (\cdot) P^{\mu}(d\xi).
 \end{equation}

\subsection{Main results}\label{sec results Markus}

General IPS can formally be constructed by defining a generator, see \citet[Chapter I.1-3]{LiggettIPS1985}. Alternatively, one can describe an IPS via a countable set of Poisson processes $\mathcal I$, yielding a more probabilistic description, see \citet{DurrettIPS1995}. The probabilistic construction has the advantage that it yields a natural coupling, denoted by $\widehat{P}$, of the dynamics starting from any configuration on a joint probability space. For many interacting particle systems this coupling can be constructed explicitly and is known as the graphical representation.
Important to our approach is the existence of such a coupling $\widehat{P}$  of the dynamic environment $\xi$ which satisfies the following  monotonicity property, 
\begin{align}\label{eq agr}
\widehat{P} \left( \xi_t^{\eta} \leq \xi_t^{\omega}, \: \forall t>0 \text{ and } \eta,\omega \in \Omega \text{ satisfying } \eta \leq \omega \right)=1.
\end{align}
A coupling $\widehat{P}$ satisfying \eqref{eq agr} is said to be an \emph{attractive graphical representation coupling}. 

\begin{thm}[Strong law of large numbers]\label{thm main}
Assume that  $\xi$ has an attractive graphical representation coupling.
 Then, for each $i \in \{0,1\}$, there exists $\rho_i \in [0,1]$ such that
\begin{equation}\label{eq thm main}
\lim_{t \rightarrow \infty} \frac{1}{t}W_t  = \rho_i u_1 + (1-\rho_i) u_0,\quad  \Prob_{\delta_{\bar{i}},o}
\text{-a.s.\ and in } L^1.
\end{equation}
\end{thm}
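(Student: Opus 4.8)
The plan is to reduce the claim to a law of large numbers for the fraction of time the walk spends on occupied sites, and to obtain the latter from the time‑monotonicity of the environment by a subadditivity argument.

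\textit{Reduction to an occupation‑time statement.} Put $Y_s:=\xi_s(W_s)\in\{0,1\}$ and $R_t:=\int_0^t Y_s\,ds$. Compensating the jumps of $W$, the $\reals^d$‑valued process $M_t:=W_t-\int_0^t u_{Y_s}\,ds$ is a martingale, and $\int_0^t u_{Y_s}\,ds=R_t\,u_1+(t-R_t)\,u_0$, so
\begin{equation*}
\tfrac{1}{t}W_t=\tfrac{1}{t}M_t+\tfrac{R_t}{t}\,u_1+\bigl(1-\tfrac{R_t}{t}\bigr)u_0 .
\end{equation*}
Using only $\gamma<\infty$ from \eqref{eq jump rate walker} I would show $t^{-1}M_t\to 0$ a.s.\ and in $L^1$: split the jumps into those with $\norm{z}_1\le K$ and the rest; the first contribution is an $L^2$‑martingale with quadratic variation $O(t)$, hence negligible after division by $t$, while the second contributes, uniformly in $t$, at most $\delta_K\,t$ in absolute value with $\delta_K:=\max_i\sum_{\norm{z}_1>K}\norm{z}_1\alpha(i,z)\to 0$; letting $K\to\infty$, and passing through a polynomially growing sequence of times for the a.s.\ statement (using that $t\mapsto W_t-M_t$ is $\gamma$‑Lipschitz), concludes. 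Hence it suffices to show $R_t/t\to\rho_i$ a.s.\ and in $L^1$ for a constant $\rho_i$; since $R_t/t\in[0,1]$ the limit automatically lies in $[0,1]$, and $L^1$‑convergence follows from the a.s.\ convergence by bounded convergence.

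\textit{Mean convergence and the upper bound.} Take $i=1$; the case $i=0$ is symmetric, with the roles of $0$ and $1$ interchanged and the inequalities reversed. Since the environment starts from $\bar1$ we have $\xi^{\bar1}_s\le\bar1$ for every $s$, so by the Markov property and the coupling $\widehat P$ of the pair $(\xi,W)$ constructed in Section~\ref{sec construct} — which compares the continuation of the present process after time $s$ with a fresh copy started from $\bar1$, monotonically in the initial environment — the occupation time on $1$‑sites accumulated on $[s,s+t]$ is, conditionally on $\mathcal F_s$, stochastically dominated by an independent copy of $R_t$. Taking expectations, $g(t):=\Exp_{\delta_{\bar1},o}[R_t]$ is subadditive, so by Fekete's lemma $g(t)/t\to\rho_1:=\inf_{t>0}g(t)/t$. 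Moreover the conditional domination lets one couple $(R_{ks})_k$ so that $R_{ks}$ is bounded above by a sum of $k$ i.i.d.\ copies of $R_s$; the classical SLLN then gives $\limsup_k R_{ks}/(ks)\le g(s)/s$ a.s., and since $t\mapsto R_t$ is $1$‑Lipschitz this passes to all $t$, whence, taking the infimum over $s$, $\limsup_t R_t/t\le\rho_1$ a.s.

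\textit{The matching lower bound, and the main obstacle.} What remains is $\liminf_t R_t/t\ge\rho_1$ a.s., and I expect this to be the crux: starting from $\bar1$ the environment is only stochastically decreasing in time, which is exactly what delivered the upper bound above but provides no comparable lower bound on the future occupation on $1$‑sites. The route I would take is to exploit that the law of $\xi^{\bar1}_s$ decreases to — and in particular dominates — the upper invariant measure $\bar\nu$ as $s\to\infty$, so that, in the sense made precise by the coupling of Section~\ref{sec construct}, a restart after a large time $s$ sees an environment at least as favourable as a fresh start from $\bar\nu$; this bounds $\liminf_t R_t/t$ from below by $\lim_t\Exp_{\bar\nu,o}[R_t]/t$, and one checks that the latter equals $\rho_1$ using that $g((j+1)s)-g(js)$ is non‑increasing in $j$ with Cesàro average $g(ks)/k\to\rho_1 s$, so these increments, hence their limiting value $\Exp_{\bar\nu,o}[R_s]$, converge to $\rho_1 s$. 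The delicate point throughout is the interaction between this stochastic domination and the random recentring $\theta_{-W_s}$ of the environment as seen from the walk, which is precisely why the explicit coupling of Section~\ref{sec construct} is needed. Combining the last two paragraphs yields $R_t/t\to\rho_1$ a.s.; bounded convergence upgrades this to $L^1$; and substitution into the identity of the first step gives \eqref{eq thm main}.
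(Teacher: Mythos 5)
Your first two steps are sound and broadly parallel to the paper's strategy (the paper reduces to the jump-time occupation count $\rho(N_t,\xi)$ rather than $R_t=\int_0^t\xi_s(W_s)\,ds$, and recovers $W_t$ from i.i.d.\ jump variables rather than by a martingale decomposition, but these are cosmetic differences). One technical caveat already there: the monotonicity provided by Section~\ref{sec construct} (Lemma~\ref{lem mono}) is for the \emph{count of occupied observations at jump times}, not for the time integral $R_t$. Under that coupling the pathwise ordering of $R_t$ can genuinely fail: if the two walks first disagree at a jump (count deficit $1$), the lower environment's walk may sit on an occupied site for a long inter-jump interval while the upper one sits on a vacant site, so $R_t(\eta)>R_t(\omega)$ even though $\eta\le\omega$; the count ordering survives only because its increments are $\{0,1\}$-valued. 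This is repairable by working with $\gamma^{-1}\rho(N_t,\xi)$ throughout (and relating it to $R_t$ by a compensator/martingale estimate), but as written your conditional stochastic domination of the restarted occupation time is not justified by the coupling you invoke.

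The genuine gap is the lower bound, exactly where you locate "the crux". The paper never needs a separate lower-bound argument: it defines the restarted quantity $X_{t,t+s}$ by running a \emph{fresh environment started from the deterministic configuration $\bar 1$} at the walker's space-time location $(W_t,t)$, using the shifted graphical representation and the unused $O,V,N$ randomness. Because the restart configuration is deterministic, $(X_{(k-1)t,kt})_k$ is i.i.d.\ and stationary (Lemma~\ref{lem subadd}), and Liggett's subadditive ergodic theorem delivers the a.s.\ and $L^1$ limit in one stroke -- both halves at once. Your proposed substitute, dominating the restarted environment from below by $\bar\nu$, does not survive the two difficulties you yourself flag but do not resolve: (i) the law of the environment \emph{seen from} $W_s$ is not the annealed law of $\xi_s^{\bar 1}$ recentred at a deterministic point, so the stochastic domination $\mathcal{L}(\xi_s^{\bar 1})\ge\bar\nu$ does not transfer to the walker's viewpoint without an argument; and even coupling a $\bar\nu$-distributed configuration below $\xi^{\bar1}_{ks}$ makes the block lower bounds dependent on the past (the conditional law given $\FF{ks}$ is not $\Prob_{\bar\nu,o}$), so no law of large numbers for these blocks is available; (ii) the identification $\lim_j\bigl(g((j+1)s)-g(js)\bigr)=\Exp_{\bar\nu,o}[R_s]$ and the monotonicity in $j$ of these increments are asserted but not proved, and under the hypotheses of Theorem~\ref{thm main} alone (no mixing) there is no reason the environment seen from the walker equilibrates towards $\bar\nu$; relating the $\bar\nu$-started speed to $\rho_1$ is precisely what Theorem~\ref{prop coupling speed} needs extra mixing assumptions for. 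So as it stands your argument proves only $\limsup_t R_t/t\le\rho_1$; to close the gap you should set up the restart process from $\bar 1$ as in the paper and invoke the subadditive ergodic theorem (or reprove its hard half), rather than restart from $\bar\nu$.
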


 Note that Theorem \ref{thm main} does not require $(W_t)$ to be elliptic nor set restrictions on finite range jump transitions, technical assumptions often present in the literature.
Further, \citet[Theorem 2.5]{DurrettIPS1995} yields a large class of IPS with spin-flip dynamics having an attractive graphical representation coupling to which Theorem \ref{thm main} applies. 

The proof of Theorem \ref{thm main} makes use of a particular coupling construction of the random walk together with the sub-additive ergodic theorem. The coupling construction (given in Section \ref{sec construct}) enables us to transfer monotonicity properties of the environment to a functional of $(W_t)$. Informally, this functional counts the number of occupied sites the random walk observes at its jump times, as a function of time. The monotonicity property of this functional together with the graphical representation of the environment naturally leads to a sub-additive structure which we use to obtain a SLLN by employing the sub-additive ergodic theorem (with limit equal to $\rho_i$ as in \eqref{eq thm main}). This is the content of  Theorem \ref{cor main} below. As a last step, the SLLN in Theorem \ref{cor main} is transferred into a SLLN for $(W_t)$, whose proof is given in  Section \ref{sec proof lln}.

The restriction to the extremal starting configurations in Theorem \ref{thm main} can in many cases be relaxed.
For $m > 0$, let 
\begin{equation}\label{eq cone m}
V_m := \set{ (x,t) \in \mathbb{Z}^d \times [0,\infty) : \: \norm{x}_{1} < mt} 
\end{equation}
be a  cone of inclination $m$ opening upwards in space-time. Let $(\xi_t)$ be an IPS with an attractive graphical representation coupling and, for $\eta \in \Omega$, $i \in \{0,1\}$ and $T \in [0,\infty)$, define 
\begin{equation}
\phi_i^{(m)}(\eta,T) :=\widehat{P} \left( \exists (x,t) \in V_m \cap \mathbb{Z}^d \times [T,\infty) : \:\xi_t^{\eta}(x) \neq \xi_t^{\bar{i}}(x) \right).
\end{equation}
Note that $\xi$ is cone mixing, as defined in \cite[Definition 1.1]{AvenaHollanderRedigRWDRELLN2011}, if
\begin{equation}\label{conemixing}
\lim_{ T \rightarrow \infty} \phi_1^{(m)}(\bar{0},T) = 0, \quad \forall \: m \in [0,\infty).
\end{equation}
Further, denote by $\bar{\nu}_0 , \bar{\nu}_1 \in \mathcal{P}(\Omega)$ the ``lower'' and ``upper'' invariant measures  to which $(\eta_t)$ converges when initialised from ${\bar{0}}$ and ${\bar{1}}$ respectively. That is, we have
 $\bar{\nu}_0 = \lim_{t \rightarrow \infty} P^{\bar{0}}(\xi_t \in \cdot) $ and  $\bar{\nu}_1 = \lim_{t \rightarrow \infty} P^{\bar{1}}(\xi_t \in \cdot) $. These limits exist and are invariant under $(\xi_t)$ (see \cite[Theorem III.2.3]{LiggettIPS1985}).
Lastly, recalling \eqref{eq speed SRW}, we denote the convex hull of $u_0$ and $u_1$ by
\begin{equation}
U(u_0,u_1) := \text{conv} \left( u_0,u_1 \right).
\end{equation}

\begin{thm}\label{prop coupling speed}
Assume that  $\xi$ has an attractive graphical representation coupling.
Let $i \in \set{0,1}$ and assume that for some $\epsilon,m>0$ we have $U(u_0,u_1) \times \{1/\gamma\} \subset V_{m(1-\epsilon)}$ and 
\begin{equation}\label{eq conemixing}  \lim_{ T \rightarrow \infty} \phi_i^{(m)}(\eta,T) = 0, \quad\text{for } \bar{\nu}_i-a.e. \: \eta \in \Omega. \end{equation}
Then, for all $\nu\in \mathcal{P}(\Omega)$ such that $ \nu \leq \bar{\nu}_{i}$ (if $i=0$) or $\bar{\nu}_{i}\leq \nu$ (if $i=1$),
\begin{equation}
\lim_{t \rightarrow \infty} \frac{1}{t} W_t = \rho_i u_1 + (1-\rho_i)u_0 \quad \Prob_{\nu,o} \text{-a.s. and in } L^1.
\end{equation}
\end{thm}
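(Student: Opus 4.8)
The plan is to deduce the statement from Theorem~\ref{cor main} by a monotonicity argument. I treat $i=1$; the case $i=0$ is entirely analogous after interchanging $0\leftrightarrow1$, $\bar 0\leftrightarrow\bar 1$ and reversing the partial orders. Fix $\nu$ with $\bar\nu_1\le\nu$; since $\delta_{\bar 1}$ dominates every measure, $\bar\nu_1\le\nu\le\delta_{\bar 1}$. Recall from the proof of Theorem~\ref{thm main} the occupation functional $N^\eta_t$, counting the occupied sites the walk observes at its jump times up to $t$, built from the coupling construction of Section~\ref{sec construct}: it is monotone in $\eta$, satisfies $\tfrac1t N^{\bar i}_t\to\rho_i$ a.s.\ and in $L^1$ (Theorem~\ref{cor main}), and Section~\ref{sec proof lln} converts such a strong law for $N$ into the strong law $\tfrac1t W_t\to\rho_iu_1+(1-\rho_i)u_0$.

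I would first couple three initial configurations $\eta^{(0)}\sim\bar\nu_1\le\eta^{(1)}\sim\nu\le\eta^{(2)}=\bar 1$ (Strassen) and drive them with $\widehat{P}$, so that \eqref{eq agr} gives $\xi^{\bar\nu_1}_t\le\xi^\nu_t\le\xi^{\bar 1}_t$ for all $t$. Build the walk and the functional $N^\nu$ on this space; monotonicity gives $N^\nu_t\le N^{\bar 1}_t$, and from the construction of Section~\ref{sec construct} I want the sharper bound $N^{\bar 1}_t-N^\nu_t\le D_t$, where $D_t$ is the number of jump times $s\le t$ at which the walk occupies a site $x$ with $\xi^\nu_s(x)\ne\xi^{\bar 1}_s(x)$. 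Since $0\le N^{\bar 1}_t-N^\nu_t\le D_t$ and $\tfrac1t N^{\bar 1}_t\to\rho_1$, everything reduces to showing $\tfrac1t D_t\to0$ a.s.\ (the $L^1$ statement then follows from uniform integrability, $\tfrac1t N^\nu_t$ being dominated by $\tfrac1t N^{\bar 1}_t$).

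To control $D_t$ I need two ingredients. The first, which I expect to be the main obstacle, is that the walk is eventually confined to $V_m$: from $\gamma<\infty$ and the time scale built into the construction one gets the a priori bound $\mathrm{dist}\bigl(\tfrac1t W_t,\overline{U(u_0,u_1)}\bigr)\to0$, and the hypothesis $U(u_0,u_1)\times\{1/\gamma\}\subset V_{m(1-\epsilon)}$ is precisely what then forces $\Prob_{\nu,o}$-a.s.\ the existence of a finite $T_0$ with $(W_s,s)\in V_m$ for all $s\ge T_0$; making the quantitative drift hypothesis (in particular the role of $1/\gamma$) mesh with the construction is where the work lies. The second ingredient is cone mixing. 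Because $\xi^{\bar\nu_1}_s\le\xi^\nu_s\le\xi^{\bar 1}_s$, the set of sites where $\xi^\nu$ and $\xi^{\bar 1}$ disagree at time $s$ lies inside the set where $\xi^{\bar\nu_1}$ and $\xi^{\bar 1}$ disagree, so
\begin{equation*}
\widehat{P}\bigl(\exists\,(x,s)\in V_m\cap(\mathbb{Z}^d\times[T,\infty)):\ \xi^\nu_s(x)\ne\xi^{\bar 1}_s(x)\bigr)\ \le\ \int_\Omega\phi_1^{(m)}(\eta,T)\,\bar\nu_1(d\eta),
\end{equation*}
which tends to $0$ as $T\to\infty$ by \eqref{eq conemixing} and dominated convergence. (This is the only place $\bar\nu_1\le\nu$ is used.) Choosing $T_j\to\infty$ along which the left side is summable, Borel--Cantelli gives $\widehat{P}$-a.s.\ a finite random $T^*$ with $\xi^\nu\equiv\xi^{\bar 1}$ on $V_m\cap(\mathbb{Z}^d\times[T^*,\infty))$.

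Combining the two: $\Prob_{\nu,o}$-a.s., for every $s\ge\max(T_0,T^*)$ the walk sits at a site where $\xi^\nu$ and $\xi^{\bar 1}$ agree, so $D_t$ is bounded by the ($t$-independent, a.s.\ finite) number of jump times before $\max(T_0,T^*)$; hence $\tfrac1t D_t\to0$ a.s. Therefore $\tfrac1t N^\nu_t\to\rho_1$ a.s.\ and in $L^1$, and the transfer of Section~\ref{sec proof lln} yields $\tfrac1t W_t\to\rho_1u_1+(1-\rho_1)u_0$ $\Prob_{\nu,o}$-a.s.\ and in $L^1$. Beyond the cone-mixing hypothesis — which enters cleanly through the Borel--Cantelli step — the two inputs needing genuine care are the a priori confinement of the walk to $V_m$ (the delicate point) and the sharp monotone bound $N^{\bar 1}_t-N^\nu_t\le D_t$ expected from the construction of Section~\ref{sec construct}.
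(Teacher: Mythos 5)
Your two ingredients (eventual confinement of the walk to the cone, and the $\bar\nu_1$-averaged cone estimate $\int\phi_1^{(m)}(\eta,T)\,\bar\nu_1(d\eta)\to0$ via attractiveness and dominated convergence) are exactly the paper's events $D_T$ and $\Gamma_T$, and your use of $\bar\nu_1\le\nu\le\delta_{\bar 1}$ is the right sandwich. The gap is the step $N^{\bar 1}_t-N^\nu_t\le D_t$. The two functionals are computed along \emph{different} walk trajectories: in the coupling of Section \ref{sec construct}, the walker in environment $\xi^\nu$ and the walker in environment $\xi^{\bar 1}$ occupy the same site only as long as their occupation counts agree (this is precisely the case analysis in the proof of Lemma \ref{lem mono}). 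After a single jump time at which the $\bar 1$-walker sees a $1$ and the $\nu$-walker a $0$, the counts differ, the trajectories separate permanently, and from then on the difference $\rho(k,\xi^{\bar 1})-\rho(k,\xi^\nu)$ can keep increasing at steps where \emph{both} walkers sit at sites on which the two environments agree (one at a site occupied in both, the other at a site vacant in both). So the difference is not controlled by the number of visits to disagreement sites, and the conclusion $\tfrac1t N^\nu_t\to\rho_1$ does not follow from Lemma \ref{lem mono} plus your $D_t$ bound. Redefining $D_t$ along the $\nu$-walk only (comparing its observations of $\xi^\nu$ versus of $\xi^{\bar 1}$) does give a valid bound, but then the comparison quantity is not the functional to which Theorem \ref{cor main} applies, so the argument becomes circular.

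What is missing is a matching \emph{lower} bound, and this is where the paper works: Lemma \ref{lem mono} gives $\rho_t(\xi^{\bar\nu_1})\le\rho_t(\xi^\nu)\le\rho_t(\xi^{\bar 1})$, the upper end converges by Theorem \ref{cor main}, and the lower end is handled by a separate argument at the \emph{stationary} law $\bar\nu_1$: one conditions on the event that neither walker jumps before time $T$ (harmless by stationarity of $\bar\nu_1$ and independence of the Poisson clock), and drives the $\bar\nu_1$-walker and a $\delta_{\bar 1}$-walker with the \emph{same} uniforms and jump times after $T$. On $\Gamma_T$ (cone confinement plus agreement of $\xi^{\bar\nu_1}$ and $\xi^{\bar 1}$ on $V_m$ after $T$) the two trajectories then literally coincide, hence the two $\rho$-functionals are equal and $\tfrac1t\rho_t(\xi^{\bar\nu_1})\to\rho_1$ with probability at least $\widehat{\Prob}(\Gamma_T)\to1$. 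In other words, the restart/trajectory-coincidence argument (and with it the stationarity of $\bar\nu_i$) cannot be bypassed by discrepancy counting; your proposal as written skips it, and that step would fail.
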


Theorem \ref{prop coupling speed} relaxes the assumption on the starting configuration in Theorem \ref{thm main}. Note that \eqref{eq conemixing}  is weaker than cone mixing (in the sense of  \eqref{conemixing}) and applies to IPS with non-uniform correlation structure. The proof of Theorem \ref{prop coupling speed} uses a different coupling construction than that needed for the proof of Theorem \ref{thm main} and is given in Section \ref{sec proofs coupling}.

From the proof of Theorem \ref{thm main}, we also obtain certain large deviation estimates. These are presented in Section \ref{sec proofs ldp}. In particular, we have the following theorem.

\begin{thm}[Large deviation estimates]\label{thm ldp estimate W}
Assume that  $\xi$ has an attractive graphical representation coupling. Further, assume $\rho_0=\rho_1 =: \rho$ (with $\rho_0$ and $\rho_1$ as in  Theorem \ref{thm main}). Consequently, $(W_t)$ satisfies \eqref{eq thm main} $\Prob_{\mu,o}$-a.s with limiting speed $v:= \rho u_1 +(1-\rho)u_0$, irrespectively of $\mu \in \mathcal{P}(\Omega)$. If, in addition, $(W_t)$ has finite exponential moments,
then for any $\epsilon>0$ there exists $C(\epsilon)>0$ such that
\begin{equation}
\Prob_{\mu,o} \left( \norm{W_t-tv}_1 > \epsilon t \right) \leq \exp \left(- C(\epsilon) t\right), \quad \text{ for all } \mu \in \mathcal{P}(\Omega).
\end{equation}
\end{thm}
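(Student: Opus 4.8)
The plan is to revisit the coupling construction behind Theorem~\ref{thm main} (Section~\ref{sec construct}) and to upgrade the sub-additive-ergodic input used for Theorem~\ref{cor main} to an exponential estimate. First I uniformise the walk: let $(\tau_k)_{k\ge1}$ be the points of a rate-$\gamma$ Poisson process, independent of $\xi$, set $N_t:=\#\set{k:\tau_k\le t}$, and let the walk observe $b_k:=\xi_{\tau_k}(W_{\tau_k^-})\in\set{0,1}$ at time $\tau_k$ and then move by an independent increment $Z_k$ of law $\nu_{b_k}$, where $\nu_i(\set z)=\alpha(i,z)/\gamma$ for $z\ne o$; then $W_t=\sum_{k\le N_t}Z_k$ and $\Exp[Z_k\mid b_k]=u_{b_k}/\gamma$ by \eqref{eq speed SRW}. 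Let $M_t:=\sum_{k\le N_t}b_k$ be the (suitably normalised) functional from Section~\ref{sec construct} counting the occupied sites seen by $(W_t)$, so that Theorem~\ref{cor main} gives $\tfrac1t M_t^{\bar i}\to\gamma\rho_i$, $\Prob_{\delta_{\bar i},o}$-a.s.\ and in $L^1$. Since
\[
 W_t-tv=\sum_{k\le N_t}\bigl(Z_k-\Exp[Z_k\mid b_k]\bigr)+u_1\Bigl(\tfrac{M_t}{\gamma}-\rho t\Bigr)+u_0\Bigl(\tfrac{N_t-M_t}{\gamma}-(1-\rho)t\Bigr),
\]
it suffices to prove, for every $\delta>0$ and uniformly in $\mu$, exponential decay of (i) $\Prob_{\mu,o}(\abs{M_t-\gamma\rho t}>\delta t)$, (ii) $\Prob(\abs{N_t-\gamma t}>\delta t)$, and (iii) $\Prob_{\mu,o}\bigl(\norm{\sum_{k\le N_t}(Z_k-\Exp[Z_k\mid b_k])}_1>\delta t\bigr)$.

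Step (i) is the crux, and the only place $\rho_0=\rho_1$ is used. Extending $\widehat P$ to the walk as in Section~\ref{sec construct}, the count $M_t^{\eta}$ is nondecreasing in $\eta$, so $M_t^{\bar 0}\le M_t^{\eta}\le M_t^{\bar 1}$ $\widehat P$-a.s., whence $\Prob_{\mu,o}(M_t\ge a)\le\widehat P(M_t^{\bar 1}\ge a)$ and $\Prob_{\mu,o}(M_t\le a)\le\widehat P(M_t^{\bar 0}\le a)$ for every $\mu$. Restarting at time $t$ and using $\xi_t^{\bar 1}\le\bar 1$, attractiveness \eqref{eq agr} (together with translation invariance \eqref{eq stationarity} and the independence after time $t$ of the clocks, the increments, and the graphical-representation marks) forces the number of $1$'s seen on $(t,t+s]$ to be stochastically dominated by an independent fresh copy of $M_s^{\bar 1}$, so $M_{t+s}^{\bar 1}\le_{\mathrm{st}}M_t^{\bar1}+\widetilde M_s^{\bar1}$ with $\widetilde M_s^{\bar1}\perp M_t^{\bar1}$; symmetrically $\xi_t^{\bar0}\ge\bar0$ gives $M_{t+s}^{\bar0}\ge_{\mathrm{st}}M_t^{\bar0}+\widetilde M_s^{\bar0}$. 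Since $0\le M_t^{\bullet}\le N_t$, the functions $g^{\bar1}_\lambda(t):=\log\Exp e^{\lambda M_t^{\bar1}}$ ($\lambda\ge0$) and $g^{\bar0}_\lambda(t):=\log\Exp e^{\lambda M_t^{\bar0}}$ ($\lambda\le0$) are finite and sub-additive in $t$, hence $\tfrac1t g^{\bar i}_\lambda(t)\to\Lambda_i(\lambda):=\inf_t\tfrac1t g^{\bar i}_\lambda(t)$; $\Lambda_i$ is convex with $\Lambda_i(0)=0$, and combining Jensen's bound $g^{\bar i}_\lambda(t)\ge\lambda\Exp M_t^{\bar i}$ with $\Lambda_i(\lambda)\le\tfrac1t g^{\bar i}_\lambda(t)$ and $\tfrac1{\lambda t}g^{\bar i}_\lambda(t)\to\tfrac1t\Exp M_t^{\bar i}$ as $\lambda\to0$ pins the one-sided derivative of $\Lambda_i$ at $0$ to $\gamma\rho_i$ (using that $\Exp M_t^{\bar i}/t$ is itself sub-/super-additive with limit $\gamma\rho_i$). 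A Chernoff bound $\Prob(M_t^{\bar1}\ge at)\le e^{-\lambda a t+g^{\bar1}_\lambda(t)}$, optimised over small $\lambda>0$ where $\lambda a>\Lambda_1(\lambda)$ for $a=\gamma(\rho_1+\delta)>\Lambda_1'(0^+)$, then gives $\widehat P(M_t^{\bar1}\ge\gamma(\rho_1+\delta)t)\le e^{-Ct}$, and likewise $\widehat P(M_t^{\bar0}\le\gamma(\rho_0-\delta)t)\le e^{-Ct}$; with $\rho_0=\rho_1=\rho$ this is (i), uniformly in $\mu$.

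Step (ii) is the elementary Chernoff bound for the Poisson variable $N_t$, which also yields $\Prob(N_t>2\gamma t)\le e^{-Ct}$. For (iii) I condition sequentially: with respect to the filtration $\mathcal F_k$ generated by the clocks and the environment up to $\tau_{k+1}$ and by $Z_1,\dots,Z_k$, the variable $b_k$ is $\mathcal F_{k-1}$-measurable and $Z_k$ has conditional law $\nu_{b_k}$, so $D_k:=Z_k-u_{b_k}/\gamma$ is a martingale difference sequence with, coordinatewise, $\Exp[e^{\lambda D_k^{(\ell)}}\mid\mathcal F_{k-1}]\le e^{\psi(\lambda)}$ for $\abs\lambda$ small, where $\psi(\lambda)=O(\lambda^2)$ is finite by \eqref{eq finite exp mom} (only $b_k\in\set{0,1}$ occur, and the holding atom at $o$ is harmless). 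Applying Doob's inequality to the submartingale $\exp(\lambda\sum_{k\le n}D_k^{(\ell)})$ on the event $\set{N_t\le2\gamma t}$ from (ii) and optimising over $\lambda$ gives $\Prob_{\mu,o}\bigl(\max_{n\le2\gamma t}\abs{\sum_{k\le n}D_k^{(\ell)}}>\delta t\bigr)\le e^{-Ct}$, and a union bound over the $d$ coordinates yields (iii), again uniformly in $\mu$.

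Finally, on the complement of the union of the (finitely many, $\mu$-uniform, exponentially small) events from (i)--(iii), the opening display bounds $\norm{W_t-tv}_1$ by $(1+\norm{u_1}_1+2\norm{u_0}_1)\delta t$, which is $<\epsilon t$ for $\delta=\delta(\epsilon)$ small enough ($\norm{u_i}_1\le\gamma$ by \eqref{eq jump rate walker}); hence $\Prob_{\mu,o}(\norm{W_t-tv}_1>\epsilon t)$ is at most the probability of that union, which is $\le e^{-C(\epsilon)t}$ for all large $t$ and thus, after decreasing $C(\epsilon)$, for all $t\ge0$, uniformly in $\mu$. I expect Step (i) to be the main obstacle: the sub-additive ergodic theorem underlying Theorem~\ref{thm main} gives only almost-sure convergence, so one must genuinely re-extract the sub-/super-additive structure of $M_t^{\bar1}$ and $M_t^{\bar0}$ at the level of moment generating functions — exploiting that the uniformising Poisson clock makes all exponential moments finite — and it is precisely the coincidence $\rho_0=\rho_1$ that lets the two resulting one-sided bounds squeeze $M_t$ for every initial law $\mu$.
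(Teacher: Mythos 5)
Your proposal is correct, and it shares the paper's overall skeleton — exponential concentration of the occupied-site count $\rho_t(\xi)$ (your $M_t/\gamma$) coming from the sub-additive structure of Lemma \ref{lem subadd}, the monotone sandwich $M_t^{\bar 0}\leq M_t^{\mu}\leq M_t^{\bar 1}$ from Lemma \ref{lem mono} to get uniformity in $\mu$, the hypothesis $\rho_0=\rho_1$ to turn two one-sided bounds into two-sided concentration, and finite exponential moments to transfer to $W_t$ — but both halves are implemented differently. For the concentration of $M_t$, the paper (Theorem \ref{thm ldp estimate}, following Grimmett--Kesten) fixes a scale $T$ with $T^{-1}\widehat{\mathbb{E}}[X_{0,T}]\leq\rho_1+\epsilon$, uses that the block increments $X_{(i-1)T,iT}$ are i.i.d.\ and Poisson-dominated, and applies an exponential Chebyshev bound to the centred blocks; you instead push the stochastic sub-additivity with independent increments to the level of log-moment generating functions, apply Fekete, and identify the slope at $0$ via $\Lambda_1(\lambda)\leq g^{\bar 1}_\lambda(T)/T$ together with $\rho_1=\inf_T T^{-1}\widehat{\mathbb{E}}[\rho_T(\xi)]$ from Theorem \ref{cor main} — the same two ingredients, packaged as a Chernoff bound rather than a block estimate, and equally valid. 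For the transfer to $W_t$, the paper writes $W_t$ as the two sandwiched i.i.d.\ sums of $\tilde O_i$ and $\tilde V_j$, restricts $\rho(N_t,\xi)$ to the window $t\gamma(\rho\pm\delta)$, and applies Cram\'er's theorem with a union bound over the $O(t)$ admissible values of the count; your martingale decomposition $W_t-tv=\sum_{k\leq N_t}(Z_k-u_{b_k}/\gamma)+u_1(M_t/\gamma-\rho t)+u_0((N_t-M_t)/\gamma-(1-\rho)t)$ with conditional exponential moments, Doob's maximal inequality on $\{N_t\leq 2\gamma t\}$ and Poisson concentration of $N_t$ avoids that union bound and is arguably cleaner; its correctness rests on the (valid) observation that the uniform variable consumed at each jump is fresh and independent of $\sigma(\xi,N)$ and of the previously consumed uniforms, so $Z_k$ given the past has law $\nu_{b_k}$. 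One small caveat: your closing claim that the bound extends to \emph{all} $t\geq0$ by merely decreasing $C(\epsilon)$ is not literally true when $\norm{v}_1>\epsilon$ (for very small $t$ the event has probability one); like the paper's own proof, which ends with bounds of the form $C_1e^{-c_1t}$, the estimate should be read for $t$ large or with an adjusted prefactor, so this is a shared imprecision rather than a gap in your argument.
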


The proof of Theorem \ref{thm ldp estimate W} is given in  Section \ref{sec proofs ldp}, where we also prove some additional large deviation properties. The key observation for the proof is that the sub-additive structure obtained for the functional used in the proof of Theorem \ref{thm main} yields one-sided large deviations estimates for this functional. The assumption that $\rho_0=\rho_1$ implies two-sided large deviation estimates, however, for the same functional. An additional argument is needed in order to conclude large deviation estimates for $(W_t)$. For this, we use the assumption that $(W_t)$ has finite exponential moments.

\subsubsection*{Random walk on the contact process}\label{sec intro cp}

One classical IPS having an attractive graphical representation coupling is the contact process $\xi = (\xi_t)_{t\geq 0}$. Given $\lambda \in (0,\infty)$, the contact process on $\Latd$ with ``infection rate'' $\lambda$ is defined via its local transition rates, which are given by
\[ \eta \rightarrow \eta^x \text{ with rate } \left\{
	\begin{array}{ll}
		1, & \text{if }\eta(x) =1, \\
		\lambda \sum_{x \sim y} \eta(y), & \text{if } \eta(x)=0.
	\end{array}
\right.\]
Here, $\eta^x$ is defined by $\eta^x(y) := \eta(y)$ for $y \neq x$, and $\eta^x(x) := 1 - \eta(x)$, and $\sum_{x \sim y} $ denotes the sum over nearest neighbours. 

Much is known about the contact process; see \cite[Chapter 1]{LiggettSIS1999} for a thorough introduction. In particular, the empty configuration $\bar{0}$ is an absorbing state for the contact process. On the other hand, starting from the full configuration $\bar{1}$, the contact process evolves towards an equilibrium measure $\bar{\nu}_{\lambda}$, called the ``upper invariant measure'', which is stationary and ergodic with respect to $(\xi_t)$. Further, there is a critical threshold $\lambda_c(d) \in (0,\infty)$, depending on the dimension $d$, such that $\bar{\nu}_{\lambda}=\delta_{\bar{0}}$ for $\lambda \in (0,\lambda_c(d)]$ and, for all $\lambda \in (\lambda_c(d),\infty)$, we have $\bar{\nu}_{\lambda}(\eta(o) =1)>0$.
In particular, for $\lambda > \lambda_c(d)$, the contact process does not satisfy \eqref{conemixing}.

\begin{thm}\label{prop contact process}
Consider the contact process on $\integers^d$ with $d\geq 1$ and infection rate $\lambda \in (\lambda_c(d),\infty)$.
\begin{description}\label{eq lln contact}
\item[a)]  There exists $\rho=\rho(\lambda) \in [0,1]$  such that for all $\nu \in \mathcal{P}(\Omega)$ with $\bar{\nu}_{\lambda} \leq \nu$,
\begin{equation}\label{eq cp lln}
\lim_{t \rightarrow \infty} \frac{1}{t} W_t = u_1 \rho + (1-\rho)u_0  \quad \Prob_{\nu,o} \text{-a.s. and in } L^1.
\end{equation}
\item[b)] The function $\rho\colon (\lambda_c(d), \infty)\to [0,1]$, $\lambda \mapsto \rho(\lambda)$, is non-decreasing and right-continuous in $\lambda$. Moreover, if $(W_t)$ has finite second moments, then
\begin{equation} \label{eq nontrivial rho}
\rho(\lambda) \in (0,1) \quad  
\text{ and }  \quad
\lim_{\lambda \rightarrow \infty} \rho(\lambda) = 1.\end{equation}
\end{description}
\end{thm}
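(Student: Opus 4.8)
The plan is to obtain part a) as a direct application of Theorem \ref{prop coupling speed} with $i=1$, after verifying its hypotheses for the supercritical contact process, and to obtain part b) by combining the sub-additive construction behind Theorem \ref{thm main} with monotonicity in $\lambda$ and a comparison argument. For part a), the contact process has the well-known graphical representation built from independent Poisson processes (recovery marks at each site, infection arrows along each oriented edge), and this coupling is attractive in the sense of \eqref{eq agr}; this is the classical additive structure. Hence Theorem \ref{thm main} already gives $\rho_0,\rho_1$. Since $\bar 0$ is absorbing for the contact process, we have $\bar\nu_0=\delta_{\bar 0}$, so $\rho_0 = 0$ is not what we want; instead we work with $i=1$ and $\bar\nu_1 = \bar\nu_\lambda$. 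The remaining hypothesis of Theorem \ref{prop coupling speed} to check is the one-sided cone-mixing condition \eqref{eq conemixing} for $i=1$: for $\bar\nu_\lambda$-a.e. $\eta$, the coupled processes $\xi^\eta$ and $\xi^{\bar 1}$ agree inside the cone $V_m\cap(\mathbb Z^d\times[T,\infty))$ with probability tending to $1$ as $T\to\infty$. This is where the essential input enters: it follows from the exponential relaxation of the supercritical contact process started above $\bar\nu_\lambda$ (equivalently, the complete-convergence / exponential-ergodicity results for the supercritical contact process in all dimensions, going back to work on the contact process in the supercritical regime). One also needs $U(u_0,u_1)\times\{1/\gamma\}\subset V_{m(1-\epsilon)}$, which is a purely geometric statement: choosing $m$ large enough (which is allowed, since \eqref{eq conemixing} must hold for the relevant $m$ only, and large cones are easier), the segment $\{1/\gamma\}\times\mathrm{conv}(u_0,u_1)$ lies strictly inside $V_{m(1-\epsilon)}$ because $\|u_i\|_1\le\gamma$ by \eqref{eq jump rate walker}. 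Then Theorem \ref{prop coupling speed} gives \eqref{eq cp lln} for all $\nu\ge\bar\nu_\lambda$, with $\rho=\rho_1$.

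For the monotonicity statement in part b), the key is that the contact processes with different infection rates can be coupled simultaneously in a single graphical representation: use the same recovery Poisson processes at all sites, and for infection across an edge at rate $\lambda$ thin a master Poisson process of rate $\lambda_{\max}$; then $\lambda\le\lambda'$ implies $\xi^{\bar 1,\lambda}_t\le\xi^{\bar 1,\lambda'}_t$ for all $t$, pathwise. Feeding this into the coupling construction of Section \ref{sec construct}, the functional counting occupied sites seen by the walk at its jump times is monotone in $\lambda$, and since (by the proof of Theorem \ref{thm main}) $\rho(\lambda)$ is the limit of the normalised expectation of that sub-additive functional started from $\bar 1$, monotonicity of the functional in $\lambda$ transfers to $\rho$. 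Right-continuity should follow from continuity of the coupled construction under decreasing $\lambda\downarrow\lambda_0$: the thinned Poisson configurations converge, the functional converges monotonically, and one exchanges the limit in $\lambda$ with the limit in $t$ using monotonicity (a standard diagonal/monotone-limit argument), noting that along a decreasing sequence $\rho(\lambda_n)\downarrow$ some limit $\ge\rho(\lambda_0)$, while the pathwise monotone convergence of the functionals gives the reverse inequality.

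For the non-triviality in \eqref{eq nontrivial rho} under finite second moments: to see $\rho(\lambda)<1$, note that $\bar\nu_\lambda(\eta(o)=0)>0$ for every finite $\lambda$, and in fact the graphical representation guarantees that starting from $\bar 1$ the site at the walker's position is vacant a positive fraction of the time — more precisely, $\rho_1$ equals the long-run density of "occupied" observations, and one can lower-bound the density of vacant observations by a Borel--Cantelli / ergodic argument exploiting that recovery marks rain down at rate $1$ everywhere and that the walk has bounded speed, so with positive probability it sits on a recently-recovered (hence empty) site at a jump time. Symmetrically $\rho(\lambda)>0$ because infections occur at positive rate. The limit $\lim_{\lambda\to\infty}\rho(\lambda)=1$ should come from the fact that as $\lambda\to\infty$ the contact process started from $\bar 1$ stays equal to $\bar 1$ for longer and longer times (the first recovery that is not immediately re-infected is pushed to later times), so the walk sees occupied sites with density tending to $1$; making this quantitative is where finite second moments of the walk are used, to control how far the walk can travel and hence how many distinct sites it must rely on being occupied. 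I expect the main obstacle to be precisely this last point — turning "$\lambda$ large $\Rightarrow$ environment stays near $\bar 1$" into a uniform lower bound on $\rho(\lambda)$ that tends to $1$ — together with the careful justification of right-continuity via interchange of limits; the verification of \eqref{eq conemixing} for part a), while the conceptual heart, can be quoted from the existing supercritical contact process literature.
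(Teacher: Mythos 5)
Your part a) follows the paper's route (attractive graphical representation, then Theorem \ref{prop coupling speed} with $i=1$), but you quote the key mixing input from the literature, whereas the paper actually proves \eqref{eq conemixing} by a self-contained duality argument (Lemmas \ref{lem dual} and \ref{lem coupling of CP}: conditioned on survival the dual cluster has at least $at$ sites, so a Bernoulli-distributed initial configuration intersects it up to an exponentially small error, then a union bound over the cone, then Liggett--Steif domination of a product measure by $\bar\nu_\lambda$). Note also that your aside ``large cones are easier'' is backwards: enlarging $m$ makes the geometric condition $U(u_0,u_1)\times\{1/\gamma\}\subset V_{m(1-\epsilon)}$ easier but the agreement requirement in $\phi_1^{(m)}$ \emph{harder}; it works here only because the contact process estimate holds for every $m$. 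Your monotonicity and right-continuity arguments for b) are essentially the paper's (monotone coupling in $\lambda$, plus $\rho(\lambda)=\inf_T T^{-1}\widehat{\mathbb{E}}_{\delta_{\bar 1},o}[\rho_T(\xi(\lambda))]$ as a decreasing limit of non-decreasing continuous functions).

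The genuine gap is the non-triviality \eqref{eq nontrivial rho}, which is where most of the paper's work lies. Your claim that ``symmetrically $\rho(\lambda)>0$ because infections occur at positive rate'' does not work: the two directions are not symmetric. The bound $\rho(\lambda)<1$ is easy because recoveries are spontaneous, rate-$1$, environment-independent events, so the walker sees a freshly vacated site with probability bounded below regardless of its trajectory. By contrast, occupation at the walker's location requires an infection path from elsewhere, and the walker's position is correlated with the environment (it could, a priori, track vacant regions), so $\bar\nu_\lambda(\eta(o)=1)>0$ alone gives nothing. The paper rules this out with a substantial argument: for $d=1$, a comparison with the rightmost infection front $r_{s,t}$ using the generalised construction of Section \ref{sec construct general}, stochastic domination of the configuration left of the walk by $\bar\nu_\lambda$, and finite mean of the inter-observation times $\tau_k$ (large deviations for the front plus Chebyshev, which is where finite second moments enter); for $d\ge2$, survival of the supercritical contact process in (tilted) space-time slabs (Proposition \ref{prop Grimmett1}, an adaptation of Bezuidenhout--Grimmett) combined with a scheme in which the walk only ``observes'' the environment upon entering a new slab. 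Likewise your heuristic for $\lim_{\lambda\to\infty}\rho(\lambda)=1$ --- that the process ``stays equal to $\bar 1$ longer'' --- is false: holes appear at the walker's site at rate $1$ for every $\lambda$; the correct mechanism is quantitative (front speed $\alpha(\lambda)\to\infty$ in $d=1$; slab survival with large tilt $L$ and observation probability tending to $1$ in $d\ge2$). Without an argument of this kind, \eqref{eq nontrivial rho} is unproven in your proposal.
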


Theorem \ref{prop contact process} extends the law of large numbers of \cite{HollanderSantosRWCP2013}, obtained for the nearest neighbour random walk on the supercritical contact process on $\integers$, to higher dimensions and beyond the nearest neighbour assumption. 

Concerning the proof of Theorem \ref{prop contact process}, note that a) follows immediately from Theorem \ref{thm main} and the graphical representation coupling of the contact process when started from $\bar{1}$. To extend this to any measure stochastically dominated by $\bar{\nu}_{\lambda}$, we prove that the contact process satisfies \eqref{eq conemixing}.

The function $\rho(\cdot)$ in Theorem \ref{prop contact process}b) is the same functional as considered in the proof of Theorem \ref{thm main}. That this is non-decreasing and right-continuous is not difficult to show and follows by monotonicity considerations. Most of the proof of Theorem \ref{prop contact process}b) goes about showing that \eqref{eq nontrivial rho} holds. Note that this result implies that the SLLN in \eqref{eq cp lln} is non-trivial in the sense that the speed of $(W_t)$ is neither $u_0$ nor $u_1$. For the proof of \eqref{eq nontrivial rho}, we treat the two cases $d=1$ and $d\geq2$ separately. For $d=1$ we  extend an argument of  \cite{HollanderSantosRWCP2013} beyond nearest neighbour jumps. 
For $d\geq2$ we use that the supercritical contact process survives in certain (tilted) space-time slabs  together with the monotonicity properties of  $\rho(\cdot)$. Section \ref{sec contact Markus} is dedicated to the  proof of Theorem \ref{prop contact process}.

\subsection{Discussion}\label{sec disc}
\begin{enumerate}
\item The SLLN for random walks on a $2$-state IPS has  been proven earlier by \citet*{AvenaHollanderRedigRWDRELLN2011}  under strong mixing assumptions on the environment, known as cone mixing. This has been extended to more general IPS by \citet{RedigVolleringRWDRE2013}, however, still under a uniform mixing assumption similar to cone mixing.

Theorem \ref{thm main} in this paper yields an extension of the SLLN in \cite{AvenaHollanderRedigRWDRELLN2011}  to random walks on IPS which are not cone mixing, but satisfy a monotonicity property. Indeed, instead of cone mixing, we assume that the IPS has an attractive graphical representation coupling and is started from a configuration where all sites are equal. Theorem \ref{prop coupling speed} present sufficient mixing conditions for relaxing the restriction on the starting configuration.  

Contrary to \cite{AvenaHollanderRedigRWDRELLN2011} and \cite{RedigVolleringRWDRE2013}, it is essential to the proof of Theorem \ref{thm main} that the random walk only has two transition kernels. That is, at jump times, the random walk chooses one among two transition kernels, depending on the environment. It is not clear how to extend our argument to random walks having more than two  transition kernels. In fact, there are examples showing that the monotonicity property crucial to our proof (see Lemma \ref{lem mono}) does not always hold for such systems already when the random walk depends on three states; see \citet{HolmesSalisburyRWDRE2012}.

\item Important to the proof of Theorem \ref{thm main} is the fact that a certain functional of the environment $\xi$ and $(W_t)$ is monotone in $\xi$. This functional counts, as a function of time, the number of occupied sites the random walk observes at the jump times of the random walk (see Section \ref{sec construction} for a definition). The monotonicity property of this functional is proven in Lemma \ref{lem mono}. We note that this property has earlier been exploited by \citet{HolmesSalisburyRWDRE2012} to study monotonicity properties of random walks on i.i.d.\ \emph{static} $2$-state random environments.

In several recent works on nearest neighbour RWDRE on $\integers$, e.g.\ \citet*{HilarioHollanderSidoraviciusSantosTeixeiraRWRW2014} and \citet{HuveneersSimenhausRWSEP2014}, monotonicity properties of the random walk have played an important role. Lemma \ref{lem mono} seems useful in order to extend their results to random walks on $\integers^d$ with more general transition kernels.

\item In \citet*{PeresPopovSousiRTRW2013}, sufficient conditions for general RWRE models to be transient were proven. In particular, \cite[Proposition 1.4]{PeresPopovSousiRTRW2013}  implies that $(W_t)$, as studied in this paper, is transient if it is elliptic and $d \geq 5$. If $u_1=u_0=0$, and under weak moment assumptions, (\cite[Theorem 1.2]{PeresPopovSousiRTRW2013}) yields that $(W_t)$ is transient when $d \geq 3$.

\item Theorem \ref{prop coupling speed} can be extended to hold for measures different from $\bar{\nu}_0$ and $\bar{\nu}_1$, see the remark at the end of Section \ref{sec proofs coupling}. In particular, the statement of Theorem \ref{prop coupling speed} holds if $(W_t)$ is elliptic and the dynamic environment is initialised from a measure $\mu \in \mathcal{P}(\Omega)$ which satisfies \eqref{eq conemixing} (with $\mu$ replacing $\bar{\nu}_i$). As an example, the contact process started from any measure $\mu$ stochastically dominating a non-trivial Bernoulli product measure satisfies \eqref{eq conemixing} with $i=1$, as follows from the proof of Theorem \ref{prop contact process}a).

\item Large deviation estimates, such as those obtained in Theorem \ref{thm ldp estimate W}, have previously been obtained in \citet*{AvenaHollanderRedigRWDRELDP2009} and \citet{RedigVolleringLTRWDRE2011}.
In \cite{RedigVolleringLTRWDRE2011}, explicit estimates are derived, but under strong mixing assumptions.
Closest to this paper is \cite{AvenaHollanderRedigRWDRELDP2009}, where an annealed as well as a quenched large deviation  principle is derived, however, restricted to nearest neighbour random walks on attractive spin-flip dynamics on $\integers$.
Note that, Theorem \ref{thm ldp estimate W} extends the estimates in  \cite[Proposition 2.5]{AvenaHollanderRedigRWDRELDP2009} to hold throughout the cone mixing regime and for random walks on $\mathbb{Z}^d$ with $d\geq1$.

\item The proof of Theorem \ref{prop contact process}b) can be adapted to more general dynamics 
such as  general additive and attractive spin-flip systems in the supercritical regime. At least when $d\geq 2$, our proof  seems to transfer to  this case using that such processes also survives in (tilted) space-time slabs, as shown in \citet{BezuidenhoutGrayCASS1994}.

Non-triviality of the speed for RWDRE as in Theorem \ref{prop contact process}b) has previously been proven by dos Santos \cite{SantosRWSEP2013} for a random walk on the exclusion process, by employing multi-scale arguments. This argument can perhaps be adapted to yield a different proof of $\rho(\lambda) \in (0,1)$ for the random walk on the contact process.
\end{enumerate}

\section{Construction}\label{sec construct}
 
\subsection{Coupling construction of the random walk}\label{sec construction}
In this section, we describe a particular coupling construction of the random walk. This construction is at the heart of the argument for the proof of Theorem \ref{thm main}, as it yields an important monotonicity property; see Lemma \ref{lem mono} below.

To construct the evolution of the random  walk, let $(N_t)$ be a Poisson jump process with jump rate $\gamma \in (0,\infty)$ and with inverse process $(J_k)_{k \geq 0}$. We call these times the jump times of the random walk. 
Essential to our approach and for the proof of Theorem \ref{thm main} is the introduction of two independent sequences of i.i.d.\ {\sc unif}[0,1] random variables, $O=(O_j)_{j \geq 1}$ and $V= (V_j)_{j\geq 1}$. Here $O$ stands for occupied, whereas $V$ stands for vacant.

Given $\alpha = (\alpha(i,z))_{i\in \set{0,1}, z \in \mathbb{Z}^d}$ as introduced in \eqref{eq jump rate walker}, enumerate $\mathbb{Z}^d = \set{z_1,z_2,\dots}$ and define \newline
$(p_1(m))_{m=0}^{\infty}$ and $(p_0(n))_{n=0}^{\infty}$ by setting $p_1(0)=p_0(0)=0$ and
\begin{align}
p_1(m)=\frac1\gamma\sum_{j=1}^m \alpha(1,z_j)\quad  \text{ and } \quad p_0(n)=\frac1\gamma\sum_{j=1}^n \alpha(0,z_j),\quad n,m\in \nat. \end{align} 
For convenience, we shall assume that 
the maximum in \eqref{eq jump rate walker} is attained by both $\alpha(1,\cdot)$ and $\alpha(0,\cdot)$  
by adapting the values for $\alpha(0,o)$ and $\alpha(1,o)$ appropriately. Note that this does not affect the behaviour of the random walk. Moreover, we can arrange that 
\begin{align}
\lim_{m\to\infty}p_1(m)=\lim_{n\to\infty}p_0(n)=1.
\end{align} 
Given a fixed environment $(\xi_t)_{t\geq0}$ we next define the \emph{discrete-time}
 random walk $S=(S_k)_{k\in\nat}$. For this, we also introduce the functional $(\rho(k,\xi))_{k \in \nat}$, taking values in $\nat$.
 Let $S_0 = o$ and $\rho(0,\xi)=0$ and, given $S_k$ and $\rho(k,\xi)$, define  $S_{k+1}$ and $\rho(k+1)$ iteratively by
\begin{equation}
\rho(k+1,\xi) = \rho(k,\xi) + \xi_{J_k}(S_k)
\end{equation} and
\begin{equation}\label{eq walker construction}
\begin{aligned}
S_{k+1} = S_k &+ (1-\xi_{J_k}(S_k)) \sum_{n=1}^{\infty} 1_{[p_0(n-1),p_0(n))}(V_{k+1-\rho(k,\xi)}) z_n \\&+ \xi_{J_k}(S_k) \sum_{m=1}^{\infty} 1_{[p_1(m-1),p_1(m))}(O_{\rho(k,\xi)+1}) z_m.
\end{aligned}
\end{equation}
Note that, in \eqref{eq walker construction}, since  $(\xi_t)$ is c\`adl\`ag and independent of $(J_k)_{k\geq0}$, we have that $\xi_{J_{k}^-}(S_k)$ is a.s.\ equal to $\xi_{J_k}(S_k)$ for all $k$. 
Further, we have that 
\begin{equation}
\rho(k,\xi) = \sum_{i=0}^{k-1} \xi_{J_i}(S_i), \quad k\in\nat, 
\end{equation} 
counts the number of occupied sites the (discrete-time) random walk $(S_k)$ has observed at the first $k$ jump times.
The (continuous-time) random walk $(W_t)$ with transition kernels $\alpha(i,\cdot)$, $i \in \{0,1\}$, is obtained by setting $W_t = S_{N_t}$, as follows by the construction, using that $(\xi_t)$ is right-continuous. The continuous version of the (rescaled) $\rho(n,\xi)$ is given by 
\begin{equation}\label{def density}
\rho_t(\xi) := \frac1{\gamma} \rho(N_t,\xi), \quad t\in [0,\infty).
\end{equation}

\subsection{Generalisation of the coupling construction}\label{sec construct general}

In the proof of Theorem \ref{prop contact process}b) in Section \ref{sec contact a} we carry out a domination argument. 
For this purpose, we consider a generalisation of the construction in Section \ref{sec construction}. 
For any $t \in [0,\infty)$, denote by $\xi_{[0,t]}$ the space-time environment from time $0$ to time $t$. 
Furthermore, consider a family of Boolean functions $(f_k)$, measurable 
 with respect to the $\sigma$-algebra $\sigma(\xi_{[0,J_k]},N,O,V)$. 
The construction in Section \ref{sec construction} can then be generalised in the same manner by setting $S_0=0$ and $\rho^{(d)}(0,\xi)=0$, and iteratively,
\begin{align}\label{eq general construction}
\begin{split}
&\rho^{(d)}(k+1,\xi) = f_{k+1} + \rho^{(d)}(k,\xi) 
\\ &S_{k+1} = S_k + (1-f_{k}) \sum_{n=1}^{\infty} 1_{[p_0(n-1),p_0(n))}(V_{k+1-\rho^{(d)}(k,\xi)}) z_n
\\& \quad \quad \quad \quad  \quad + f_k \sum_{m=1}^{\infty} 1_{[p_1(m-1),p_1(m))}(O_{\rho^{(d)}(k,\xi)+1}) z_m.
\end{split}
\end{align}
Thus, in this more general setup, $\rho^{(d)}(k,\xi) = \sum_{i=0}^{k-1} f_i$, $ k\in\nat$.
Note that, we recover \eqref{eq walker construction} when $f_k = \xi_{J_k}(S_k)$.
In Section \ref{sec contact a}, we consider cases where $f_k= \ind_{R_k} \xi_{J_k}(S_k)$ for some event $R_k \in \sigma(\xi_{[0,J_k]},N,O,V)$, for which we readily see that $\rho^{(d)}(k,\xi) \leq \rho(k,\xi)$.

\subsection{Monotonicity}\label{sec mono}
The construction in the previous two subsections provides us with a coupling that keeps track of the number of occupied sites the random walk has observed at any given time. The key property of the coupling construction in Subsection  \ref{sec construction} is a monotonicity property, which we state next. For this, denote the elements of $D_{\Omega}[0,\infty)$ by $(\eta_t)_{t\geq0}$ and write $(\eta_t)_{t\geq0} \leq (\omega_t)_{t\geq0}$ if $\eta_t(x) \leq \omega_t(x)$ for all $x \in \mathbb{Z}^d$ and $t \in [0,\infty)$.
\begin{lem}[Monotonicity of particle density]\label{lem mono}
For any $\eta=(\eta_t)_{t\geq0}$ and $\omega= (\omega_t)_{t\geq0}$ contained in $D_{\Omega}[0,\infty)$ satisfying  $(\eta_t)_{t\geq0} \leq (\omega_t)_{t\geq0}$;
\begin{equation}\label{eq lem mono supermann}
\rho_{s}(\eta) \leq \rho_{s}(\omega) \quad \forall \: s \in [0,\infty).
\end{equation}
 \end{lem}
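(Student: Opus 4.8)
The natural strategy is induction on the jump index $k$, proving the stronger discrete statement that, whenever $(\eta_t)\leq(\omega_t)$, one has simultaneously
\begin{equation*}
\rho(k,\eta)\leq\rho(k,\omega)\qquad\text{and}\qquad S_k^\eta \text{ ``agrees suitably with'' } S_k^\omega,
\end{equation*}
from which \eqref{eq lem mono supermann} follows by the definition \eqref{def density} since $\rho_s(\cdot)=\tfrac1\gamma\rho(N_s,\cdot)$ and $N_s$ does not depend on the environment. The subtlety is that $\rho(k,\eta)\leq\rho(k,\omega)$ alone is not a strong enough induction hypothesis: the increment at step $k$ is $\eta_{J_k}(S_k^\eta)$ versus $\omega_{J_k}(S_k^\omega)$, and these are read off at \emph{different} spatial positions $S_k^\eta$ and $S_k^\omega$, so even with $\eta\leq\omega$ pointwise there is no immediate domination of the increments. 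One therefore has to track how the two walks are positioned relative to one another and how the index mismatch in the {\sc unif}$[0,1]$ variables plays out.

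The key structural observation — and what makes the construction in Section~\ref{sec construction} work — is the bookkeeping of the variables $O$ and $V$. At step $k$, if the walk currently sees an occupied site it uses $O_{\rho(k,\cdot)+1}$ and increments $\rho$, and if it sees a vacant site it uses $V_{k+1-\rho(k,\cdot)}$ and leaves $\rho$ unchanged. Thus the $\eta$-walk and the $\omega$-walk consume the $O$-variables in order $O_1,O_2,\dots$ exactly up to the current value of $\rho$, and likewise consume $V_1,V_2,\dots$ exactly up to $k-\rho$. I would argue by induction that the two walks, run against $\eta$ and against $\omega$, stay coupled in the following sense: they make identical moves and see identical environment values up to the first time the $\omega$-walk sees a $1$ where the $\eta$-walk sees a $0$ — which, since $\eta\leq\omega$, is the only way a discrepancy in the increment of $\rho$ can arise, and it can only ever push $\rho(\cdot,\omega)$ ahead of $\rho(\cdot,\eta)$, never behind. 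Concretely: as long as the two density counters agree, say $\rho(k,\eta)=\rho(k,\omega)=r$ and the two walks are at the same site $x$, at step $k$ the $\eta$-walk uses $O_{r+1}$ or $V_{k-r}$ according to $\eta_{J_k}(x)$ and the $\omega$-walk uses $O_{r+1}$ or $V_{k-r}$ according to $\omega_{J_k}(x)$; if $\eta_{J_k}(x)=\omega_{J_k}(x)$ both use the same variable, make the same move, and the counters still agree; if $\eta_{J_k}(x)=0<1=\omega_{J_k}(x)$ then $\rho(\cdot,\omega)$ jumps ahead by one and stays (at least) one ahead forever after, because once $\rho(\cdot,\omega)\geq\rho(\cdot,\eta)+1$ the gap is non-decreasing: the $\omega$-increment is in $\{0,1\}$ and the $\eta$-increment is in $\{0,1\}$, so the gap can only change by at most $1$ in absolute value per step, and for it to \emph{decrease} to zero the $\eta$-walk would have to see a $1$ where the $\omega$-walk sees a $0$ — impossible under $\eta\leq\omega$ — or the two would have to be reading the same site with $\eta$ seeing $1$ and $\omega$ seeing $0$, again impossible. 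Hence $\rho(k,\eta)\leq\rho(k,\omega)$ for all $k$.

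I would organise the write-up by first stating the induction hypothesis as: for all $k$, either (i) $S_k^\eta=S_k^\omega$ and $\rho(k,\eta)=\rho(k,\omega)$, or (ii) $\rho(k,\eta)<\rho(k,\omega)$. Then one checks that each of (i) and (ii) is preserved at step $k+1$ — case (i) splitting into the sub-cases $\eta_{J_k}(S_k)=\omega_{J_k}(S_k)$ (stay in (i)) and $\eta_{J_k}(S_k)=0<1=\omega_{J_k}(S_k)$ (move to (ii)), and case (ii) simply noting $\rho(k+1,\eta)=\rho(k,\eta)+\eta_{J_k}(S_k^\eta)\leq\rho(k,\eta)+1\leq\rho(k,\omega)\leq\rho(k,\omega)+\omega_{J_k}(S_k^\omega)=\rho(k+1,\omega)$, so (ii) persists. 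The base case $k=0$ is (i) trivially since $S_0^\eta=S_0^\omega=o$ and $\rho(0,\cdot)=0$. Finally, since $\rho(k,\eta)\leq\rho(k,\omega)$ for every $k$, evaluating at $k=N_s$ and multiplying by $1/\gamma$ gives $\rho_s(\eta)\leq\rho_s(\omega)$ for all $s\in[0,\infty)$, as claimed.

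\textbf{Main obstacle.} The genuinely delicate point is the second part of case~(i): verifying that once the counters split, the walks' subsequent divergence in position cannot cause $\rho(\cdot,\eta)$ to overtake $\rho(\cdot,\omega)$. The clean way around this is exactly what is exploited above — that the counter gap changes by at most one per step and can only increase, which relies on $\eta\leq\omega$ holding at \emph{every} site and \emph{every} time, together with the fact that once we are in regime (ii) we do not need to know anything about the relative positions of the two walks at all: the crude bound $\eta_{J_k}(\cdot)\in\{0,1\}$ suffices. So the apparent difficulty (walks at different sites reading different environment values) dissolves once one realises the induction hypothesis should be the disjunction (i)$\vee$(ii) rather than a direct comparison of the two trajectories.
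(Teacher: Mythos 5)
There is a genuine gap, and it sits exactly at the point you flag as the ``main obstacle''. Your claim that once the counters split the gap $\rho(k,\omega)-\rho(k,\eta)$ is non-decreasing is false: after the split the two walks sit at \emph{different} sites, and $\eta\leq\omega$ only compares values at the \emph{same} site, so nothing prevents $\eta_{J_k}(S_k^\eta)=1$ while $\omega_{J_k}(S_k^\omega)=0$ (take $\eta=\omega$ with the $\eta$-walk currently on an occupied site and the $\omega$-walk on a vacant one). Hence your case (ii) need not persist: the chain $\rho(k+1,\eta)\leq\rho(k,\eta)+1\leq\rho(k,\omega)\leq\rho(k+1,\omega)$ only yields a non-strict inequality, and when equality occurs your induction hypothesis (i)$\vee$(ii) can only be restored if you also know $S_{k+1}^\eta=S_{k+1}^\omega$ at that moment --- which you never establish, and which your (incorrect) ``gap cannot decrease'' argument was meant to make unnecessary.

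The missing ingredient is the one the paper's proof actually uses: because the occupied steps consume $O_1,O_2,\dots$ in order and the vacant steps consume $V_1,V_2,\dots$ in order, the position admits the closed form $S_k=\sum_{i=1}^{\rho(k,\cdot)}\bigl(\sum_m 1_{[p_1(m-1),p_1(m))}(O_i)z_m\bigr)+\sum_{j=1}^{k-\rho(k,\cdot)}\bigl(\sum_n 1_{[p_0(n-1),p_0(n))}(V_j)z_n\bigr)$, i.e.\ $S_k$ depends on the environment only through the single number $\rho(k,\cdot)$. Consequently, whenever the two counters coincide --- at time $0$, or after a temporary split that closes again --- the positions coincide automatically, and $\eta\leq\omega$ then dominates the increments. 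With this observation the induction hypothesis can simply be $\rho(k,\eta)\leq\rho(k,\omega)$ (no statement about positions needed), with the equality case handled via the closed form and the strict case via your crude bound; that is precisely the paper's argument. Your write-up becomes correct once you replace the ``gap is monotone'' claim by this re-coalescence fact.
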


 \begin{proof}
  Consider two (discrete-time) random walkers $(S_n^{1})_{n \geq 0}$ and $(S_m^{2})_{m \geq 0}$, both constructed as in Subsection \ref{sec construction} using the same realisation of $((N_t),(O_k),(V_k))$ and having identical  transition kernels, seeing environment $(\eta_t)$ and $(\omega_t)$, respectively. We claim that 
  \begin{equation}\label{eq lem mono supertarzan}
  \rho (n,\eta)\leq \rho(n,\omega) \quad \text{ for all } n \geq0.
  \end{equation}
  To see this, we argue by induction. First note that, by definition, we have that $\rho(0,\eta)=\rho(0,\omega)$. As the induction hypothesis, we assume that $\rho(k,\eta)\leq \rho(k,\omega)$ for some $k \geq 0$.
  
  In the case that $\rho(k,\eta)< \rho(k,\omega)$, we have  $\rho(k+1,\eta)\leq \rho(k,\eta)+1 \leq \rho(k,\omega) \leq \rho(k+1,\omega)$. 
  Thus, the induction step holds in this case.
  In the other case, where  $\rho(k,\eta)=  \rho(k,\omega)$, we have by construction that 
 \begin{align}
 S_k^{1} &= \sum_{i=1}^{\rho(k,\eta)} \big[ \sum_{m=1}^{\infty} 1_{[p_1(m-1),p_1(m))}(O_i) z_m \big]
 \\ &+ \sum_{j=1}^{k-\rho(k,\omega)} \big[ \sum_{n=1}^{\infty} 1_{[p_0(n-1),p_0(n))}(V_j) z_n \big] = S_k^{2}.\end{align} 
Since $\eta \leq \omega$, it holds that 
$\eta_{J_k}(S_k^1) \leq \omega_{J_k}(S_k^2)$, which in particular implies that $\rho(k+1,\eta) \leq \rho(k+1,\omega)$. Hence, also the second case satisfy the induction step and consequently \eqref{eq lem mono supertarzan} holds. Finally, by replacing $n$ in  \eqref{eq lem mono supertarzan} by $N_t$ and multiplying by $\gamma^{-1}$, this proves \eqref{eq lem mono supermann}.
\end{proof}

\begin{rem}
For any attractive IPS $\xi$, Lemma \ref{lem mono} transfers to an almost sure statement with respect to the annealed measure (and thus also the quenched measure). In this case, for every  $\mu ,\nu \in \mathcal{P}(\Omega)$ with $\mu\leq \nu$ there exists a coupling $\widehat{\Prob}$ of $\Prob_{\mu,o}$, $\Prob_{\nu,o}$ and $\xi$ such that $\widehat{\Prob}( \rho_{t}(\xi^{\mu}) \leq \rho_{t}(\xi^{\nu})) =1$. The existence of such a coupling follows by \cite[Theorem II.2.4]{LiggettIPS1985} and the construction above.
\end{rem}

\begin{rem}
The coupling construction in Section \ref{sec construction} for random walks on a \emph{dynamic} random environment is to our knowledge new. Apparently the same coupling construction has previously been used to study monotonicity properties for certain specific random walks in \emph{static} random environment by \citet{HolmesSalisburyRWDRE2012}.  Lemma \ref{lem mono} can be seen as an immediate extension of \cite[Theorem 4.1i)]{HolmesSalisburyRWDRE2012}. 
\end{rem}

\begin{rem}
Lemma \ref{lem mono} can be extended to hold in certain cases under the general construction considered in Section \ref{sec construct general}. For this, the functions $(f_k)$ need to be monotone in the sense that  (for $\eta \leq \xi$ as above), if $\rho^{(d)}(k,\eta) = \rho^{(d)}(k,\xi)$, then $\rho^{(d)}(k+1,\eta)\leq \rho^{(d)}(k+1,\xi)$.
\end{rem}

\section{Proofs}\label{sec proofs Markus}

\subsection{Proof of Theorem \ref{thm main}}\label{sec proof lln}

In this subsection we first present the proof of Theorem \ref{thm main} for the case when the environment is started from all sites occupied. 
Essentially the same proof can be applied to the case where the environment is started from all sites vacant. We comment at the end of this subsection on which changes to the proof are necessary for this case.

The main idea is to show that $\rho_t(\xi)$ is sub-additive, by using that $\xi$ has an attractive graphical representation coupling and Lemma \ref{lem mono}. 
Subsequently, the subadditive ergodic theorem applied to $\rho_{t}(\xi)$ yields that $t^{-1} \rho_t(\xi)$ converges towards a deterministic constant. This, in turn, identifies the limiting speed. 

Let $\xi$ be an IPS with an attractive graphical representation coupling, $\widehat{P}$, where  by $\mathcal{I}$ we denote the corresponding collection of Poisson point processes. 
In order to formulate the proof, we have to be more specific about $\mathcal I$ and write $\mathcal I$ as a countable set of Poisson point processes indexed by the lattice $\mathbb{Z}^d$, $\mathcal I=\left((\mathcal{X}_y^1)_{y\in\mathbb{Z}^d}, (\mathcal{X}_y^2)_{y\in\mathbb{Z}^d},\dots\right)$, where every $\mathcal{X}^i_y$ is an (independent) Poisson point process on $[0,\infty)$.  Further, for $x\in\mathbb{Z}^d$ and $t\in[0,\infty)$ let $\Theta_{x,t}$ be the space-time shift operator on the realisations of $\mathcal{I}$: 
$$ \Theta_{x,t}\left((\mathcal{X}_{s,y}^1), (\mathcal{X}_{s,y}^2),\dots\right)_{s\in[0,\infty),y\in\mathbb{Z}^d}
=\left((\mathcal{X}_{s+t,y+x}^1), (\mathcal{X}_{s+t,y+x}^2),\dots\right)_{s\in[0,\infty),y\in\mathbb{Z}^d}.$$
For the contact process, the set $\mathcal I$ as considered in Section \ref{sec contact Markus}, consists of the Poisson processes   $\left\{H_x,  I_{x,e} \colon x,e\in\mathbb{Z}^d,|e|=1\right\}$, and $\Theta_{x,t}$ shifts crosses and arrows in space by $x$ and in time by $t$. Further, in \cite[Theorem 2.5]{DurrettIPS1995}, $\mathcal{I}$ is the set of birth and death events (which in \cite{DurrettIPS1995} are denoted  by 
 $\left\{T_n^{y,i} \colon  n\geq0,y \in \mathbb{Z}^d, i \in \{0,1\} \right\}$ ).

To emphasise the graphical representation, we write $\rho_{t}(\xi) = \rho_{t}(\eta,\mathcal{I},N,O,V)$ for $\xi_0=\eta$ and let $\widehat{\Prob}$ denote the joint law of the graphical construction coupling and $N,O$ and $V$.
Note that, by Lemma \ref{lem mono} and \eqref{eq agr}, for any $\eta \in \Omega$,
\begin{equation}\label{eq mono 3.1}\rho_{t}(\eta,\mathcal{I},N,O,V) \leq \rho_{t}(\bar{1},\mathcal{I},N,O,V), \quad \widehat{\Prob}-a.s.\end{equation}
Moreover, let
\begin{align*}
&N^{(s)} = (N_t^{(s)})_{t \geq 0} := (N_{t+s}-N_s)_{t \geq 0},
\\&O^{(s)} = (O_n^{(s)})_{n \geq 0} := (O_{n + \gamma \rho_s(\xi)})_{n \geq 0},
\\&V^{(s)} = (V_n^{(s)})_{n \geq 0} := (V_{n+ N_s-\gamma \rho_s(\xi)})_{n \geq 0}.
\end{align*}
Similar to $\Theta_{x,t}$ we introduce the space-time shift $\theta_{x,t}$  on $\Omega^{[0,\infty)}$ by 
\[ (\theta_{x,t}\xi_s)_{s\ge0} = (\theta_{x}\xi_{s+t})_{s \geq 0} \]
with space-shift $\theta_x$ introduced in \eqref{eq stationarity}. 
Next, define the continuous-time process $(X_{t,s})_{0\leq t \leq s}$ by
\begin{equation}\label{eq sub pro}
X_{t,s} := \rho_{s-t}(\bar{1},\Theta_{W_t,t} \mathcal{I}, N^{(t)},O^{(t)},V^{(t)}), \quad  \text{ for } 0 \leq t \leq s.
\end{equation}
Note that, if $\xi$ is such that $\xi_0=\bar1$, then
\begin{equation}\label{X eta equiv}
	X_{0,s}=\rho_s(\bar 1,\mathcal{I},N,O,V) = \rho_s(\xi),
	\qquad  s\in [0,\infty). 
\end{equation}
In the next statement and in the proceedings, for $\mu \in \mathcal{P}(\Omega)$ and $x \in \mathbb{Z}^d$, we write  $\widehat{\Prob}_{\mu,x}$ to  emphasise the starting configuration of both $\xi$ and $(W_t)$.
\begin{lem}[Sub-additivity]\label{lem subadd}
 The process $(X_{t,s})_{0\leq t \leq s}$ has the following properties.
\begin{description}
\item[i)] $X_{0,0}=0$ and for all $t,s \in [0,\infty)$: $X_{0,t+s}\leq X_{0,t} + X_{t,t+s}$.
\item[ii)] For all $t \in (0,\infty)$, $(X_{t,k+t})_{k\geq 1}$ has the same distribution as  $(X_{0,k})_{k\geq 1}$.
\item[iii)] For all $t \in [0,\infty)$, $(X_{(k-1)t,kt})_{k\geq1}$ is a sequence of i.i.d.\ random variables.
\item[iv)] For all $t \in [0,\infty)$, the expectation $\widehat{\mathbb{E}}_{\delta_{\bar{1}},o}[X_{0,t}]$ is finite and $X_{0,t} \geq 0$.
\end{description}
\end{lem}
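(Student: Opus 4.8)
The plan is to verify the four sub-additivity properties by unpacking the definition \eqref{eq sub pro} of $X_{t,s}$ and exploiting the explicit coupling construction together with Lemma \ref{lem mono}. Throughout, the key structural fact is that $\rho_{s-t}(\bar 1, \Theta_{W_t,t}\mathcal I, N^{(t)}, O^{(t)}, V^{(t)})$ is precisely the ``particle-count functional'' of a random walk that, starting afresh at the space-time point $(W_t,t)$ with the shifted driving data, runs for time $s-t$ in the environment started from $\bar 1$ at time $t$. So the proof is mostly a matter of carefully matching indices in the iterative definitions \eqref{eq walker construction}--\eqref{def density}.

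For \textbf{i)}, the identity $X_{0,0}=0$ is immediate from $\rho_0(\cdot)=0$. For the sub-additivity inequality, I would argue as follows. By the semigroup/cocycle structure of the space-time shifts, running the random walk from time $0$ to time $t+s$ in the environment $\xi^{\bar 1}$ and then observing that at time $t$ the walk is at $W_t$, the evolution on $[t,t+s]$ is that of a walk started at $(W_t,t)$ driven by $\Theta_{W_t,t}\mathcal I$ and the shifted Poisson/uniform data $N^{(t)},O^{(t)},V^{(t)}$, but seeing the \emph{true} environment $\xi^{\bar 1}$ restricted to times $\ge t$, i.e.\ $\theta_{W_t,t}\xi^{\bar 1}$, rather than an environment freshly started from $\bar 1$ at time $t$. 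Since $\theta_{W_t,t}\xi^{\bar 1}\le \xi^{\bar 1}$ pointwise in space-time (attractivity: an environment already run for time $t$ lies below one freshly started from $\bar 1$ — here I would use \eqref{eq agr} applied to $\xi^{\bar 1}_t\le\bar 1$, giving $\xi^{\bar 1}_{t+\cdot}\le\xi^{\bar 1}_{\cdot}$ under $\widehat P$), Lemma \ref{lem mono} gives that the particle count accumulated on $[t,t+s]$ by the true walk is at most $X_{t,t+s}$. Adding the count $X_{0,t}$ accumulated on $[0,t]$ and noting $X_{0,t+s}=\rho_{t+s}(\xi^{\bar 1})$ equals the total count over $[0,t+s]$ (which splits additively by definition of $\rho$ as a sum over jump times) yields $X_{0,t+s}\le X_{0,t}+X_{t,t+s}$. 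The delicate point — and what I expect to be the \textbf{main obstacle} — is to make precise that the restarted walk on $[t,t+s]$ genuinely uses the shifted data $N^{(t)},O^{(t)},V^{(t)}$ with the correct index offsets $\gamma\rho_t(\xi)$ and $N_t-\gamma\rho_t(\xi)$ into the $O$- and $V$-sequences; this is exactly why those offsets were built into the definitions of $O^{(t)},V^{(t)}$, and one must check the bookkeeping in \eqref{eq walker construction} matches.

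For \textbf{ii)}, I would observe that $N^{(t)}$ is again a rate-$\gamma$ Poisson process independent of the rest by the strong Markov / stationary-increments property of Poisson processes, that $\Theta_{W_t,t}\mathcal I$ has the same law as $\mathcal I$ by spatial-temporal homogeneity of the Poisson point processes underlying the graphical representation (here translation invariance \eqref{eq stationarity} is used in the form that the graphical construction is built from homogeneous Poisson processes), and that $O^{(t)},V^{(t)}$ are, conditionally on the offsets, i.i.d.\ {\sc unif}$[0,1]$ sequences independent of everything relevant — since an i.i.d.\ sequence shifted by an independent (or, more carefully, measurable-but-"used-up") finite index is again i.i.d.\ with the same law and independent of the past used portion. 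Consequently the whole tuple $(\Theta_{W_t,t}\mathcal I, N^{(t)},O^{(t)},V^{(t)})$ has the same joint law as $(\mathcal I,N,O,V)$, and since $X_{t,k+t}$ is the same deterministic functional of this tuple that $X_{0,k}$ is of $(\mathcal I,N,O,V)$, the processes agree in distribution. The one subtlety is independence of the offsets from the resampled tail of $O,V$; I would phrase this via a filtration argument (the offsets $\gamma\rho_t(\xi)$ and $N_t-\gamma\rho_t(\xi)$ are measurable w.r.t.\ data up to time $t$, and the tails $O^{(t)},V^{(t)}$ only involve fresh coordinates) rather than a direct computation.

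For \textbf{iii)}, the increments over disjoint time blocks $[(k-1)t,kt]$ use disjoint portions of all the driving data: the Poisson processes $\mathcal I$ restricted to disjoint time windows are independent, $N$ has independent increments, and the blocks consume disjoint (random but adapted) ranges of the $O$- and $V$-sequences. Combined with ii) (applied with $s$ the block length) to get identical distributions, this gives an i.i.d.\ sequence; I would spell out the independence via the restart/Markov property at the deterministic times $kt$. Finally, for \textbf{iv)}, non-negativity $X_{0,t}\ge 0$ is clear since $\rho$ is a sum of indicators $\xi_{J_i}(S_i)\in\{0,1\}$, and finiteness of $\widehat{\mathbb E}_{\delta_{\bar 1},o}[X_{0,t}]$ follows from the trivial bound $\rho(N_t,\xi)\le N_t$, hence $X_{0,t}=\gamma^{-1}\rho(N_t,\xi)\le \gamma^{-1}N_t$, and $\widehat{\mathbb E}[N_t]=\gamma t<\infty$. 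I do not anticipate difficulty with ii)--iv); essentially all the work is in setting up the notation in i) so that the restart identity is transparent, after which the sub-additive inequality is a one-line consequence of Lemma \ref{lem mono} and attractivity.
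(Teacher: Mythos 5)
Your proposal is correct and follows essentially the same route as the paper: the paper likewise splits $\rho_{t+s}(\bar 1,\mathcal I,N,O,V)$ at time $t$ using the restart property of the driving data $(\Theta_{W_t,t}\mathcal I, N^{(t)},O^{(t)},V^{(t)})$ and bounds the second piece by $X_{t,t+s}$ via Lemma \ref{lem mono} combined with the attractive coupling \eqref{eq agr} (i.e.\ \eqref{eq mono 3.1}), obtains ii)--iii) from translation invariance and the Markov property of the graphical representation, and gets iv) from the bound by $N_t$. The only caveat is notational: your shorthand $\xi^{\bar 1}_{t+\cdot}\le \xi^{\bar 1}_{\cdot}$ should be read, as your surrounding prose makes clear, as comparing the environment restarted at time $t$ from $\xi^{\bar 1}_t\le\bar 1$ with the one restarted from $\bar 1$ driven by the \emph{same} shifted graphical data, which is exactly the inequality in the paper's display.
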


\begin{proof}
Fix $t,s\in [0,\infty)$ and recall \eqref{eq mono 3.1}. By the Markov property of the Poisson point process $\mathcal{I}$, we have that
\begin{align*}
X_{0,t+s} &= \rho_{t+s}(\bar{1},\mathcal{I},N,O,V) 
\\ &= \rho_{t}(\bar{1},\mathcal{I},N,V,O) + \rho(s, \theta_{W_t,t} \xi, \Theta_{W_t,t} \mathcal{I},N^{(t)},O^{(t)},V^{(t)})
\\ &\leq \rho_{t}(\bar{1}, \mathcal{I},N,O,V)+ \rho(s, \bar{1}, \Theta_{W_t,t}\mathcal{I},N^{(t)},O^{(t)},V^{(t)})
\\ &= X_{0,t} + X_{t,t+s}.
 \end{align*}
Properties i) and ii) follow from the equality $X_{0,0}=0$, the translation invariance in \eqref{eq stationarity} and the equality in distribution  $X_{0,s} = X_{t,t+s}$. 
Moreover, iii) follows by the Markov property of $\xi$ and the graphical representation.
Lastly, property iv) holds trivially, since $X_{0,t}$ is non-negative by definition and since $X_{0,t}\leq N_t$. \end{proof}

Lemma \ref{lem subadd} enables us to prove the SLLN for the process $\rho_{t}(\xi)$ when $\xi$ is initialised at time $0$ by $\bar{1}$, by applying the subadditive ergodic theorem.

\begin{thm}[Law of large numbers for $\rho_{t}(\xi)$]\label{cor main}
Assume that  $\xi$ has an attractive graphical representation coupling.
There exists $\rho_1 \in [0,1]$ such that
\begin{equation}\label{eq lln rho}
\lim_{t \rightarrow \infty} \frac{1}{t}\rho_{t}(\xi)  = \rho_1 \qquad \widehat{\Prob}_{\delta_{\bar{1}},o}\text{-a.s. and in } L^1.
\end{equation}
Moreover, $\rho_1 = \inf_{t \geq 1} t^{-1} \widehat{\mathbb{E}}_{\delta_{\bar{1}},o}(\rho_{t}(\xi))$.
\end{thm}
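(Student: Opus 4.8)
The plan is to apply Liggett's subadditive ergodic theorem (see \citet[Theorem VI.2.6]{LiggettIPS1985}, or the classical Kingman theorem in the form requiring only stationarity of the diagonal blocks) to the two-parameter family $(X_{t,s})_{0\le t\le s}$ defined in \eqref{eq sub pro}. Lemma \ref{lem subadd} has already verified exactly the four hypotheses needed: superadditivity of $-X$ / subadditivity of $X$ in i); the distributional shift-invariance in ii); the i.i.d.\ (hence stationary, ergodic) structure of the diagonal increments $(X_{(k-1)t,kt})_k$ in iii); and the integrability bound $0\le X_{0,t}\le N_t$, so $\widehat{\mathbb E}_{\delta_{\bar 1},o}[X_{0,t}]\le \gamma t<\infty$, in iv). The subadditive ergodic theorem then yields that $t^{-1}X_{0,t}$ converges $\widehat{\Prob}_{\delta_{\bar1},o}$-a.s.\ and in $L^1$ to a constant (the limit is a.s.\ constant because iii) supplies ergodicity of the shift), and that this constant equals $\inf_{t\ge1} t^{-1}\widehat{\mathbb E}_{\delta_{\bar1},o}[X_{0,t}]$. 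By \eqref{X eta equiv}, when $\xi_0=\bar1$ we have $X_{0,t}=\rho_t(\xi)$ along integer times, so $t^{-1}\rho_t(\xi)$ converges along integers to this infimum; call it $\rho_1$.

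Next I would upgrade the convergence from integer times to all real $t\to\infty$. Here the bound $\rho_{t+1}(\xi)-\rho_t(\xi)\le N_{t+1}-N_t$ (the functional $\rho$ increases by at most one per jump of $N$, and is monotone nondecreasing in $t$) controls the oscillation between consecutive integers: for $\lfloor t\rfloor\le t\le \lceil t\rceil$ we have $\rho_{\lfloor t\rfloor}(\xi)\le \rho_t(\xi)\le \rho_{\lceil t\rceil}(\xi)$, and $\tfrac1t(\rho_{\lceil t\rceil}(\xi)-\rho_{\lfloor t\rfloor}(\xi))\le \tfrac1t(N_{\lceil t\rceil}-N_{\lfloor t\rfloor})\to0$ a.s.\ by the SLLN for the Poisson process $N$ (together with $\widehat{\mathbb E}[N_{n+1}-N_n]=\gamma$ and Borel--Cantelli, or simply $N_n/n\to\gamma$). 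Sandwiching gives the a.s.\ limit for continuous $t$; the $L^1$ convergence for continuous $t$ follows since $\{t^{-1}\rho_t(\xi):t\ge1\}$ is uniformly integrable (dominated in $L^1$ by $t^{-1}N_t$, which converges in $L^1$, hence is UI).

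It remains to check $\rho_1\in[0,1]$. Nonnegativity is immediate since $\rho_t(\xi)\ge0$. For the upper bound, $\rho_t(\xi)=\gamma^{-1}\rho(N_t,\xi)$ and $\rho(N_t,\xi)\le N_t$, so $t^{-1}\rho_t(\xi)\le \gamma^{-1} t^{-1}N_t\to \gamma^{-1}\gamma=1$ a.s., whence $\rho_1\le1$. Finally, using \eqref{X eta equiv} again, $\inf_{t\ge1}t^{-1}\widehat{\mathbb E}_{\delta_{\bar1},o}[X_{0,t}]=\inf_{t\ge1}t^{-1}\widehat{\mathbb E}_{\delta_{\bar1},o}[\rho_t(\xi)]$, giving the stated formula for $\rho_1$.

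I do not expect a serious obstacle here: the conceptual work is already done in Lemma \ref{lem subadd}, and the remaining argument is the standard packaging of a discrete subadditive limit into a continuous one. The only mild care needed is (a) confirming that the version of the subadditive ergodic theorem invoked really does deliver an \emph{a.s.\ constant} limit plus the infimum formula under hypotheses i)--iv) as stated (Liggett's formulation does, precisely because iii) makes the relevant shift ergodic), and (b) making the integer-to-continuum interpolation clean using monotonicity of $t\mapsto\rho_t(\xi)$ and $N_t/t\to\gamma$. The proof for the companion case $\xi_0=\bar0$ (yielding $\rho_0$) is identical after reversing the monotonicity direction, as the authors note.
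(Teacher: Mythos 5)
Your proposal is correct and follows essentially the same route as the paper: apply Liggett's subadditive ergodic theorem (the paper cites \cite[Theorem VII.2.6]{LiggettIPS1985}, not VI.2.6) to $(X_{t,s})$ using Lemma \ref{lem subadd}, then pass from integer to continuous $t$ by the sandwich $X_{0,\lfloor t\rfloor}\leq X_{0,t}\leq X_{0,\lceil t\rceil}$. Your extra remarks on uniform integrability, the oscillation bound via $N$, and the bound $\rho_1\leq 1$ via $t^{-1}\rho_t(\xi)\leq \gamma^{-1}N_t/t$ are fine but only make explicit what the paper leaves implicit.
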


\begin{proof}
By Lemma \ref{lem subadd} we know that $X$ satisfies property a)-d) of \cite[Theorem VII.2.6]{LiggettIPS1985}. In particular, by the independence property in Lemma \ref{lem subadd}iii), the process is stationary and ergodic. Hence, the conclusion of Theorem \ref{cor main} holds  when $t$ takes integer values. This can easily be extended to continuous $t$. Indeed, for any $t\in (0,\infty)$ we have that 
\begin{align}\label{eq cor main help}
X_{0,\lfloor t \rfloor} \leq X_{0,t} \leq X_{0,\lceil t \rceil}.
\end{align}
In particular, by dividing by $t$ in \eqref{eq cor main help} and  
 taking $t \rightarrow \infty$ (as in \eqref{eq lln rho}), we conclude the proof.
 \end{proof}

We are now in position to present the proof of Theorem \ref{thm main}.

\begin{proof}[Proof of Theorem \ref{thm main}]
By the construction in Section \ref{sec construction}, $W_t$ can be written as
\begin{equation}
W_t = \sum_{i=1}^{\rho(N_t,\xi)}  \left(\sum_{m=1}^{\infty} 1_{[p_1(m-1),p_1(m))}(O_{i})z_m \right)+ \sum_{j=1}^{N_t-\rho(N_t,\xi)} \left(\sum_{n=1}^{\infty} 1_{[p_0(n-1),p_0(n))}(V_j)z_n\right).
\end{equation}
Dividing by $t >0$ gives 
\begin{align} \label{eq rho speed relation2}
\frac{W_t}{t}=
&\frac{ \rho(N_t,\xi)}{t} \frac{1}{\rho(N_t,\xi)} \sum_{i=1}^{\rho(N_t,\xi)}  \left(\sum_{m=1}^{\infty} 1_{[p_1(m-1),p_1(m))}(O_{i})z_m \right) \\
&+ \frac{N_t- \rho(N_t,\xi)}{t} \frac{1}{N_t - \rho(N_t,\xi)} \sum_{j=1}^{N_t- \rho(N_t,\xi)} \left(\sum_{n=1}^{\infty} 1_{[p_0(n-1),p_0(n))}(V_j)z_n\right).\end{align}
 Taking the limit as $t \rightarrow \infty$ and applying Theorem  \ref{cor main} we obtain 
\begin{equation}\label{eq rho speed relation}
\lim_{t \rightarrow \infty} \frac{1}{t}W_t = \rho_1 u_1 + (1-\rho_1) u_0 \quad \Prob_{\delta_{\bar{1}},o}\text{-a.s. and in } L^1,
\end{equation}
where 
$\rho_1$ is as in Theorem  \ref{cor main} and $u_0,u_1 \in \reals^d$ are as in \eqref{eq speed SRW}. This proves Theorem \ref{thm main} for the case when the environment is started from all sites equal to $1$. 

We next comment on the changes necessary in the argument for proving Theorem \ref{thm main} when started from all sites equal to $0$. For this case we can define the process $(Y_{t,s})_{0\leq t \leq s}$ given by
\begin{equation}\label{eq super process}
Y_{t,s} := \rho(s-t,\bar{0},\Theta_{W_t,t} \mathcal{I}, N^{(t)},O^{(t)},V^{(t)})  \text{ for } 0 \leq t \leq s.
\end{equation}
By the same arguments as in Lemma \ref{lem subadd} we can prove that $-Y$ is a sub-additive process satisfying property ii) and iii) as in Lemma \ref{lem subadd}. Moreover, since $Y_{0,t}$ is dominated by $N_t$ it follows that $\widehat{\mathbb{E}}_{\delta_{\bar{0}},o}[Y_{0,t}] \leq \widehat{\mathbb{E}}_{\delta_{\bar{0}},o}[N_t] = t$. This is sufficient in order to apply  \cite[Theorem VII.2.6]{LiggettIPS1985}. By a literal adaptation of the proof under $\widehat{\Prob}_{\delta_{\bar{1}},o}$ above we obtain
\begin{equation}
\lim_{t \rightarrow \infty} \frac{1}{t}W_t = \rho_0 u_1 + (1-\rho_0) u_0 \quad \Prob_{\delta_{\bar{0},o}}\text{-a.s. and in } L^1,
\end{equation}
where 
$\rho_0$ is the limit  in Theorem \ref{cor main} when $\widehat{\Prob}_{\delta_{\bar{1}},o}$ is replaced by $\widehat{\Prob}_{\delta_{\bar{0}},o}$. This completes the proof of Theorem \ref{thm main}.
\end{proof}

\subsection{Proof of Theorem \ref{prop coupling speed}}\label{sec proofs coupling}
In this subsection we present  the proof of Theorem \ref{prop coupling speed}. 
The presentation is inspired by the proof of \cite[Proposition 3.3]{HollanderSantosRWCP2013} (see also Remark 3.4 therein), and the proof of 
Theorem \ref{prop coupling speed} is an extension of their proof to higher dimensions. We only provide the proof when $i=1$. The proof for the case $i=0$ is analogous. 

\begin{proof}[Proof of Theorem \ref{prop coupling speed}]
We start with the construction of the random walk. 
Let $U:= (U_k)_{k\in \nat_0}$ be an i.i.d.\ sequence of {\sc unif}$[0,1]$ random variables, independent of the jump process $N=(N_t)_{t\geq0}$. Set $S_0^{(U)} :=0$ and, recursively for $k \geq 0$,
\begin{align*}
S_{k+1}^{(U)} := S_k^{(U)} &+ (1-\xi_{J_k}(S_k^{(U)})) \sum_{n=1}^{\infty} 1_{[p_0(n-1),p_0(n))}(U_k) z_n \\&+ \xi_{J_k}(S_k^{(U)})\sum_{m=1}^{\infty} 1_{[p_1(m-1),p_1(m))}(U_k) z_m,
\end{align*}
and let $W_t^{(U)} := S_{N_t}^{(U)}$ and $\rho_t^{(U)}= \sum_{k=0}^{N_t} \xi_{J_k}(S_k)$. Clearly $W_t^{(U)}$ and $W_t$ (as constructed in Section \ref{sec construction}) are equal in distribution, and similarly for $\rho_t^{(U)}$ and $\rho_t$, and hence onwards we do not distinguish them and write $W_t$ and $\rho_t$ for both processes.

Let
$N^{(\bar{\nu}_1)}$,
$U^{(\bar{\nu}_1)}$ and 
$N^{(\delta_{\bar{1}})}$,
$U^{(\delta_{\bar{1}})}$ be independent copies of $U$ and $N$ and denote by $\widehat{\Prob}$ the joint law of $\widehat{P},N^{(a)}, U^{(a)}$, $a \in \set{\bar{\nu}_1,\delta_{\bar{1}}}$. Then $W^{(\bar{\nu}_1)}:= W(\xi^{(\bar{\nu}_1)},N^{(\bar{\nu}_1)},U^{(\bar{\nu}_1)})$  and $\rho^{(\bar{\nu}_1)}:= \rho(\xi^{(\bar{\nu}_1)},N^{(\bar{\nu}_1)},U^{(\bar{\nu}_1)})$ under $\widehat{\Prob}$ have the same law as $W$ and $\rho$ under $\widehat{\Prob}_{\bar{\nu}_1,o}$. Similarly, $W^{(\delta_{\bar{1}})}:= W(\xi^{(\delta_{\bar{1}})},N^{(\delta_{\bar{1}})},U^{(\delta_{\bar{1}})})$  and $\rho^{(\delta_{\bar{1}})}:= \rho(\xi^{(\delta_{\bar{1}})},N^{(\delta_{\bar{1}})},U^{(\delta_{\bar{1}})})$  have the same law as $W$  and $\rho$ under $\Prob_{\delta_{\bar{1}},o}$.
Further, for $T>0$ fixed, let 
$\hat{N}=(\hat{N}_s)_{s\geq0}$ and
$\hat{U}=(\hat{U}_n)_{n\in \nat}$ be defined by
\begin{equation}
\hat{U}_n :=
\left\{
	\begin{array}{ll}
		U_n^{(\delta_{\bar{1}})}  & \mbox{if } n \leq N_T^{(\delta_{\bar{1}})}; \\
		U_n^{(\bar{\nu}_1)} & \mbox{otherwise, } 
	\end{array}
\right.
\end{equation}
\begin{equation}
\hat{N}_s :=
\left\{
	\begin{array}{ll}
		N_s^{(\delta_{\bar{1}})}  & \mbox{if } s \leq T; \\
		N_T^{(\delta_{\bar{1}})} + N_s^{(\bar{\nu}_1)} -N_T^{(\bar{\nu}_1)} & \mbox{otherwise. } 
	\end{array}
\right.
\end{equation}
It is clear that $\hat{W} := W(\xi^{(\delta_{\bar{1}})},\hat{N},\hat{U})$ and $\hat{\rho} := \rho(\xi^{(\delta_{\bar{1}})},\hat{N},\hat{U})$ have the same laws as $W^{(\delta_{\bar{1}})}$ and $\rho^{(\delta_{\bar{1}})}$. Furthermore, $\hat{N}$ and $N^{(\bar{\nu}_1)}$ are independent up to time $T$, and thus the jump times of $W^{(\bar{\nu}_1)}$ and $\hat{W}$ are independent in the time interval $[0,T]$. By construction, for times later than $T$, the jumping times of $W^{(\bar{\nu}_1)}$ and $\hat{W}$ are the same.

Next, let $\epsilon, m > 0$ be such that $U(u_0,u_1) \times \{1/\gamma\} \subset V_{m(1-\epsilon)}$ and 
\begin{equation}\label{eq  proof conemixing help}
\lim_{T \rightarrow \infty} \phi_1^{(m)}(\eta,T)=0, \quad \text{ for } \bar{\nu}_1 \text{-a.e. } \eta \in \Omega. 
\end{equation}
For $T\geq 0$, define the event
\begin{equation}
D_T := \set{ (W_t^{(\bar{\nu}_1)},t) \in V_{m(1-\epsilon)}, \: \forall \: t \geq T }
\end{equation}
and let
\begin{equation}
\Gamma_T := D_T \cap \set{ \xi_s^{(\bar{\nu}_1)}(x) = \xi_s^{(\delta_{\bar{1}})}(x), \: \forall \: (x,s) \in V_m \cap \mathbb{Z}^d \times [T,\infty)}.
\end{equation}
Note that, since $\widehat{\Prob}(\Gamma) \geq 1 - \left( \widehat{\Prob}(D_T^c) + \int \phi_1^{(m)}(\eta,T) \bar{\nu}_1(d\eta) \right)$, by  \eqref{eq  proof conemixing help} and since $U(u_0,u_1) \times \{1/\gamma\} \subset V_{m(1-\epsilon)}$, it holds that 
$\lim_{T \rightarrow \infty} \widehat{\Prob}(\Gamma_T)=1.$ 
Furthermore, by stationarity under $\bar{\nu}_1$, and since $\hat{N}_T$ is independent of  $(W_t^{(\bar{\nu}_1)})$ and $(\xi_t)$, we have that
\begin{align*}
\widehat{\Prob} \left(\lim_{t \rightarrow \infty} t^{-1} \rho_t^{(\bar{\nu}_1)} = \rho_1 \right)  
&= \widehat{\Prob} \left(\lim_{t \rightarrow \infty} t^{-1} \rho_t^{(\bar{\nu}_1)} = \rho_1 \mid N_T^{(\bar{\nu}_1)}= \hat{N}_T=0 \right) 
\\ &\geq \widehat{\Prob} \left( \left\{ \lim_{t \rightarrow \infty} t^{-1} \rho_t^{(\bar{\nu}_1)} = \rho_1\right\} \cap \Gamma_T \mid N_T^{(\bar{\nu}_1)}= \hat{N}_T=0 \right) 
\\ &=  \widehat{\Prob} \left( \left\{ \lim_{t \rightarrow \infty} t^{-1} \rho_t^{(\delta_{\bar{1}})} = \rho_1\right\} \cap \Gamma_T \mid N_T^{(\bar{\nu}_1)}= \hat{N}_T=0 \right) 
\\ & =  \widehat{\Prob} \left( \Gamma_T \mid  N_T^{(\bar{\nu}_1)}= \hat{N}_T=0 \right) =  \widehat{\Prob} \left( \Gamma_T \right) \xrightarrow{T \rightarrow \infty} 1.
\end{align*}
Hence $\widehat{\Prob} \left(\lim_{t \rightarrow \infty} t^{-1} \rho_t^{(\bar{\nu}_1)} = \rho_1 \right)   =1$. From this and the monotonicity property obtained in Lemma \ref{lem mono}, and by arguing as in the proof of Theorem \ref{thm main}, we conclude the proof. 
\end{proof}

\begin{rem}
The last paragraph of the proof of Theorem \ref{prop coupling speed} is based on \cite[Remark 3.4]{HollanderSantosRWCP2013}. Following the proof of \cite[Proposition 3.3]{HollanderSantosRWCP2013}, assuming that $(W_t)$ is elliptic, this part can be extended to hold for any measure $\mu\in \mathcal{P}(\Omega)$ for which \eqref{eq conemixing} holds, with $\bar{\nu}_1$ replaced by $\mu$. For this, since $W_T^{(\mu)} \in [-mT,mT]^d$ on $\Gamma_T$, we have that
\begin{align}
 &\widehat{\Prob} \left(\lim_{t \rightarrow \infty} t^{-1} W_t^{(\mu)} = v_1  \mid \Gamma_T\right)
\\= \sum_{x \in [-mT,mT]^d} &\widehat{\Prob}\left( \lim_{t \rightarrow \infty} t^{-1} W_t^{(\mu)} = v_1\mid \Gamma_T, W_T^{(\mu)}=x \right)  \widehat{\Prob} \left(  W_T^{(\mu)}=x \mid \Gamma_T \right).
 \end{align}
In particular, it is sufficient to show that
\begin{align}\label{rem prop coupling speed}
\widehat{\Prob} \left( \lim_{t \rightarrow \infty} t^{-1} W_t^{(\mu)} = v_1\mid \Gamma_T, W_T^{(\mu)}=x \right) =1, \: \forall \: x \in [-mT,mT]^d.
\end{align}
To prove the latter equation, use the ellipticity assumption to construct events $A_x$,  $x \in [-mT,mT]^d$, having the following properties:  $A_x \subset \{ \hat{W}_T = x \}$ and $A_x$ is independent of $(\xi_t)$ and $(W_t^{(\mu)})$. 
 Conclude \eqref{rem prop coupling speed} by first conditioning on $A_x$ and then noting that, under $\Gamma_T \cap \{W_T^{(\mu)}=x\} \cap A_x$, we can replace $\{\lim_{t \rightarrow \infty} t^{-1} W_t^{(\mu)}\}$ by $\{\lim_{t \rightarrow \infty} t^{-1}  \hat{W}_t\}$. 
\end{rem}

\subsection{Large deviations properties and proof of Theorem \ref{thm ldp estimate W}}\label{sec proofs ldp}
We constructed in Lemma \ref{lem subadd} an independent sub-additive process $(X_{t,s})_{0\leq t \leq s}$. Such processes are well known to satisfy large deviation properties, see e.g.\
\citet{GrimmettLDPSAP1985}. 
In particular, we have the following large deviation estimates for $\rho_t(\xi)$. 

\begin{thm}[Large deviation estimates for $\rho_{t}(\xi)$]\label{thm ldp estimate}
Assume that $\xi$ has an attractive graphical representation coupling. 
Then, for any $\epsilon>0$, there exists $R_i(\epsilon)>0$, $i \in \{0,1\}$, such that
\begin{align}
\begin{split}
&\widehat{\Prob}_{\delta_{\bar{1}},o} \left( \rho_{t}(\xi) > t( \rho_1 + \epsilon) \right) \leq \exp \left(-t R_1(\epsilon)\right),  \quad\text{ for all } t >0;
\\&\widehat{\Prob}_{\delta_{\bar{0}},o} \left( \rho_{t}(\xi) < t(\rho_0 - \epsilon) \right) \leq \exp \left(-t R_0(\epsilon) \right), \quad\text{ for all } t >0.
\end{split}
\end{align}
\end{thm}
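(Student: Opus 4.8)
The plan is to derive the large deviation estimates for $\rho_t(\xi)$ directly from the sub-additive structure established in Lemma~\ref{lem subadd}, using the standard theory of large deviations for sub-additive processes (e.g.\ the results of Grimmett and Kesten as in \cite{GrimmettLDPSAP1985}, or the Hammersley--Kingman--type arguments). I would treat only the first inequality, under $\widehat{\Prob}_{\delta_{\bar 1},o}$, since the second follows by the symmetric argument applied to the sub-additive process $-Y_{t,s}$ from \eqref{eq super process} (whose limit is $\rho_0$, cf.\ the last part of the proof of Theorem~\ref{thm main}). Recall from \eqref{X eta equiv} that under $\widehat{\Prob}_{\delta_{\bar 1},o}$ we have $X_{0,t}=\rho_t(\xi)$, and from Theorem~\ref{cor main} that $t^{-1}X_{0,t}\to\rho_1$ a.s.\ and in $L^1$ with $\rho_1=\inf_{t\ge 1}t^{-1}\widehat{\mathbb E}_{\delta_{\bar 1},o}[X_{0,t}]$.

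**Key steps.** First I would fix an integer $n$ and iterate the sub-additivity property i) of Lemma~\ref{lem subadd}: writing $X_{0,n}\le \sum_{k=1}^{n} X_{(k-1),k}$ is not quite what I want; instead I use the finer decomposition along a block length $\ell$, namely for $n=q\ell+r$ write $X_{0,n}\le \sum_{j=1}^{q} X_{(j-1)\ell,j\ell} + X_{q\ell,n}$, where by property iii) the first $q$ summands are i.i.d.\ copies of $X_{0,\ell}$, and the remainder term $X_{q\ell,n}$ is stochastically dominated by $N_r$ (a Poisson variable with mean $\gamma r$) since $0\le X_{t,s}\le N_{s}-N_t$. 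Second, I would introduce the moment generating function $\Lambda_\ell(\lambda):=\log\widehat{\mathbb E}_{\delta_{\bar 1},o}[e^{\lambda X_{0,\ell}}]$, which is finite for every $\lambda$ because $X_{0,\ell}\le N_\ell$ and Poisson variables have exponential moments of all orders. An exponential Chebyshev bound then gives, for $\lambda>0$,
\begin{equation}
\widehat{\Prob}_{\delta_{\bar 1},o}\bigl(X_{0,n}> n(\rho_1+\epsilon)\bigr)\le e^{-\lambda n(\rho_1+\epsilon)}\,e^{q\Lambda_\ell(\lambda)}\,\widehat{\mathbb E}[e^{\lambda N_r}].
\end{equation}
Third, I choose $\ell$ large enough that $\ell^{-1}\widehat{\mathbb E}_{\delta_{\bar 1},o}[X_{0,\ell}]<\rho_1+\epsilon/2$ (possible by the infimum characterization of $\rho_1$), then $\lambda>0$ small enough that $\ell^{-1}\Lambda_\ell(\lambda)<\lambda(\rho_1+3\epsilon/4)$ (possible since $\Lambda_\ell$ is smooth with $\Lambda_\ell'(0)=\widehat{\mathbb E}[X_{0,\ell}]$); then $q\Lambda_\ell(\lambda)-\lambda n(\rho_1+\epsilon)\le q\ell\lambda(\rho_1+3\epsilon/4)-\lambda n(\rho_1+\epsilon)\le -\lambda n\epsilon/4 + O(1)$, and the bounded remainder factor $e^{\lambda N_r}$ over $r<\ell$ contributes only a constant. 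This yields $\widehat{\Prob}_{\delta_{\bar 1},o}(X_{0,n}>n(\rho_1+\epsilon))\le C e^{-c n}$ for integer $n$, and finally the passage to continuous $t$ uses the monotonicity $X_{0,t}\le X_{0,\lceil t\rceil}$ from \eqref{eq cor main help} together with $\rho_1+\epsilon/2$ in place of $\rho_1+\epsilon$, absorbing the rounding.

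**Main obstacle.** The argument above is essentially the classical Hammersley--Kingman--Grimmett scheme and is routine once one has properties i)--iv) of Lemma~\ref{lem subadd}; the only real subtlety is that one obtains here a \emph{one-sided} estimate only. The deviation $\widehat{\Prob}_{\delta_{\bar 1},o}(\rho_t(\xi)<t(\rho_1-\epsilon))$ is \emph{not} controlled by this method, because sub-additivity gives an upper bound on $X_{0,n}$ in terms of a sum of i.i.d.\ pieces but no matching lower bound — this is exactly why the hypothesis $\rho_0=\rho_1$ is needed in Theorem~\ref{thm ldp estimate W} to get the two-sided estimate (the lower deviation of $\rho_t$ under $\delta_{\bar 1}$ is obtained from the upper deviation of $-Y$ under $\delta_{\bar 0}$, using $\rho_t(\xi^{\bar 0})\le\rho_t(\xi^{\mu})\le\rho_t(\xi^{\bar 1})$ from Lemma~\ref{lem mono}). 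So the main point to get right is simply to state Theorem~\ref{thm ldp estimate} with the correct one-sided directions ($>$ under $\delta_{\bar 1}$, $<$ under $\delta_{\bar 0}$), as is done, and to verify the finiteness of all exponential moments, which is immediate from domination by the Poisson process $N$.
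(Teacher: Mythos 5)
Your proposal is correct and follows essentially the same route as the paper: the Grimmett--Kesten block scheme built on Lemma~\ref{lem subadd}, with i.i.d.\ blocks of length $T$ (your $\ell$) chosen via the infimum characterisation of $\rho_1$, an exponential Chebyshev bound with a small tilt parameter justified by Poisson domination of the exponential moments, a separately controlled remainder for times that are not multiples of the block length, and the analogous argument for the sub-additive process $-Y$ under $\delta_{\bar 0}$. Your remarks on the one-sidedness of the estimates and the role of $\rho_0=\rho_1$ in Theorem~\ref{thm ldp estimate W} also match the paper's discussion.
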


\begin{proof}[Proof of Theorem \ref{thm ldp estimate}]
We follow the proof of Theorem 3.2 in \cite{GrimmettKestenFPP1984} and give the proof with respect to $\delta_{\bar{1}}$ only. The proof with respect to $\delta_{\bar{0}}$ follows analogously.
Let $\epsilon>0$ and choose $T>1$ such that
\begin{align}\label{eq ldp help 1.1}
g_T := \frac{1}{T} \widehat{\mathbb{E}}_{\delta_{\bar{1}},o} \big[X_{0,T} \big] \leq \rho_1 + \epsilon.
\end{align}

We first consider the case when $t=rT$ for some $r \in \nat$. Using the properties from Lemma \ref{lem subadd}, we have
\[ \widehat{\Prob}_{\delta_{\bar{1}},o} \left( X_{0,t} \geq t(\rho_1+2\epsilon)\right) \leq \widehat{\Prob}_{\delta_{\bar{1}},o} \left(Q_1 + \dots + Q_r \geq t(\rho_1 + 2 \epsilon) \right), \]
where $Q_i = X_{(i-1)T,iT}$. Moreover, the $Q_i$'s are i.i.d., and, since $Q_1$ is dominated by a Poisson random variable, $\widehat{\mathbb{E}}_{\delta_{\bar{1}},o}\big[e^{z Q_1}\big] < \infty$ for all $z \in \reals$. 

Next, let $Z_i = Q_i - \widehat{\mathbb{E}}_{\delta_{\bar{1}},o}(Q_i)$, and note that (by \eqref{eq ldp help 1.1})\begin{align*}
 \widehat{\Prob}_{\delta_{\bar{1}},o} \left( Q_1 + \dots + Q_r \geq  t (\rho_1 + 2 \epsilon) \right)
\leq \widehat{\Prob}_{\delta_{\bar{1}},o} \left( Z_1 + \dots + Z_r \geq  rT  \epsilon \right).
\end{align*}
Further, applying the exponential Chebyshev inequality implies that for each $y\geq0$,
\[ \widehat{\Prob}_{\delta_{\bar{1}},o} \left( Z_1 + \dots + Z_r \geq  rT \epsilon\right) \leq e^{-rT\epsilon y} \widehat{\mathbb{E}}_{\delta_{\bar{1}},o}\big[e^{yZ_1}\big]^r. \]
Since $\widehat{\mathbb{E}}_{\delta_{\bar{1}},o}\big[e^{yZ_1}\big]< \infty$ for all $y \leq 1$ and $\widehat{\mathbb{E}}_{\delta_{\bar{1}},o}\big[Z_1\big]=0$, there exists a constant $c= c(1) >0$ such that $\widehat{\mathbb{E}}_{\delta_{\bar{1}},o}\left[e^{yZ_1}\right] \leq 1+cy^2$  for  $y \in [0,1]$. 
Hence, by setting $y= \frac{\epsilon}{2c}$, for $r$ large,
\begin{align*}
\widehat{\Prob}_{\delta_{\bar{1}},o} \left( Z_1 + \dots + Z_r \geq  rT \epsilon \right) &\leq \exp \big[ -rT\epsilon y + r \log (1+cy^2) \big]
\\ &\leq \exp \big[-r T \epsilon y + rcy^2 \big]
\\ &= \exp \big[ -t\frac{  \epsilon^2}{4Tc}  \big].
\end{align*}
This completes the proof for the case when $t$ is a multiple of $T$.

For general values of $t$, write $t= rT + s$, where $0 \leq s < T$, and note that
\[ X_{0,t} \leq X_{0,rT} + X_{rT,t}, \]
where the last two variables are independent. Further, notice that we can bound
\[ \widehat{\Prob}_{\delta_{\bar{1}},o} \left( X_{0,t} \geq t(\rho_1+\epsilon) \right) \leq \widehat{\Prob}_{\delta_{\bar{1}},o} \left(X_{0,rT} \geq t(\rho_1 + \epsilon/2) \right) + \widehat{\Prob}_{\delta_{\bar{1}},o} \left( X_{0,s} \geq t \epsilon/2 \right).\]
By using that $X_{0,s}\leq \sum_{k=1}^{\lceil s \rceil} X_{(k-1),k}$ and Markov's inequality, we obtain that
\[ \widehat{\Prob}_{\delta_{\bar{1}},o} (X_{0,s} \geq t \epsilon) \leq e^{-t\epsilon} \widehat{\mathbb{E}}_{\delta_{\bar{1}},o}(e^{X_{0,1}})^{\lceil s \rceil}, \]
which completes the proof since $ \widehat{\mathbb{E}}_{\delta_{\bar{1}},o}(e^{X_{0,1}})^{\lceil s \rceil} \leq  \widehat{\mathbb{E}}_{\delta_{\bar{1}},o}(e^{X_{0,1}})^{\lceil T \rceil} < \infty$.
\end{proof}

We  continue with the proof of Theorem \ref{thm ldp estimate W}.

\begin{proof}[Proof of Theorem \ref{thm ldp estimate W}]
First note that, by the construction in Section \ref{sec construct}, we have that  
\begin{equation} W_t = \sum_{i=1}^{\rho(N_t,\xi)} \tilde{O}_i + \sum_{j=1}^{N_t-\rho(N_t,\xi)} \tilde{V}_j ,\end{equation}
where $\tilde{O}_i  = \sum_{m=1}^{\infty} 1_{[p_1(m-1),p_1(m))}(O_{i})z_m$ and $\tilde{V}_j =\sum_{n=1}^{\infty} 1_{[p_0(n-1),p_0(n))}(V_j)z_n$. 
Let $v=\rho u_1 + (1-\rho)u_0$, where $\rho:=\rho_1$ ($=\rho_0$ by assumption). Then, for $\epsilon>0$ and $\mu \in \mathcal{P}(\Omega)$, we have that
\begin{align*}
\Prob_{\mu,o} \left( \norm{W_t - t v}_1\geq t \epsilon \right)
 = &\widehat{\Prob}_{\mu,o} \left( \norm{ \sum_{i=1}^{\rho(N_t,\xi)} \tilde{O}_i  + \sum_{j=1}^{N_t-\rho(N_t,\xi)} \tilde{V}_j -tv}_1 \geq t \epsilon \right)
\\ \leq &\widehat{\Prob}_{\mu,o} \left( \norm{ \sum_{i=1}^{\rho(N_t,\xi)} \tilde{O}_i - t \rho u_1}_1 \geq t \frac{\epsilon}{2} \right) 
\\ + &\widehat{\Prob}_{\mu,o} \left( \norm{ \sum_{j=1}^{N_t- \rho(N_t,\xi)} \tilde{V}_j - t (1-\rho) u_0}_1 \geq t \frac{\epsilon}{2} \right).
\end{align*}
To conclude the proof it is thus sufficient to show that both the latter terms decay exponentially in $t$. Since our argument is almost identical for both terms, we only provide the detailed proof for the first one. By Theorem \ref{thm ldp estimate}, for any $\delta>0$,
\begin{align}
&\widehat{\Prob}_{\mu,o} \left( \norm{ \sum_{i=1}^{\rho(N_t,\xi)} \tilde{O}_i - t \rho u_1}_1 \geq t \frac{\epsilon}{2} \right)
\\ \label{eq help 3.14} \leq &\sum_{n=\lfloor t\gamma(\rho-\delta) \rfloor }^{\lceil t\gamma(\rho + \delta) \rceil} \widehat{\Prob}_{\mu,o} \left( \norm{ \sum_{i=1}^{n} \tilde{O}_i - t \rho u_1}_1 \geq t \frac{\epsilon}{2} \right) + \exp(-t R(\delta)),
\end{align}
where $R(\delta):=\min \{R_1(\gamma \delta),R_0(\gamma \delta)\}>0$. Indeed, the estimates in Theorem \ref{thm ldp estimate} applies to the process started from $\mu$, since by Lemma \ref{lem mono}, for any $\mu \in \mathcal{P}(\Omega)$ and $t, s>0$, we have 
$\widehat{\Prob}_{\delta_{\bar{0}},o} \left( \rho_t(\xi) \geq s \right) \leq \widehat{\Prob}_{\mu,o} \left(\rho_t(\xi) \geq s\right) \leq \widehat{\Prob}_{\delta_{\bar{1}},o} \left(\rho_t(\xi) \geq s\right).$
Further, for any integer $n \in t \gamma (\rho- \delta, \rho+\delta)$ we have that
\begin{align}\label{eq help 3.15}
\widehat{\Prob}_{\mu,o} \left( \norm{ \sum_{i=1}^{n } \tilde{O}_i - t \rho u_1}_1 \geq t \frac{\epsilon}{2} \right) \leq 
\widehat{\Prob}_{\mu,o} \left( \norm{ \sum_{i=1}^{n}  \tilde{O}_i - n  u_1}_1 \geq t (\frac{\epsilon}{2}-\norm{u}_1 \delta ) \right).
\end{align}
 Observe that, by taking $\delta>0$ small enough, we can guarantee that $\frac{\epsilon}{2}-\norm{u}_1 \delta>0$. In this case, since $(W_t)$ has finite exponential moments, \eqref{eq help 3.15} is exponentially small (in $n$, hence in $t$) by Cram\'ers Theorem applied to $(\tilde{O}_i)$. 
Applying this estimate to \eqref{eq help 3.14}, taking $\delta$ small, yields that for some $C_1,c_1>0$, \begin{align}
\widehat{\Prob}_{\mu,o} \left( \norm{ \sum_{i=1}^{\rho(N_t,\xi)} \tilde{O}_i - t \rho u_1}_1 \geq t \frac{\epsilon}{2} \right) \leq C_1e^{-c_1 t}.
\end{align}
 Noting that the same argument can be used to yield, for some $C_0,c_0>0$,
 \begin{align}
 \widehat{\Prob}_{\mu,o} \left( \norm{ \sum_{j=1}^{N_t-\rho(N_t,\xi)} \tilde{V}_j - t (1-\rho) u_0}_1 \geq t \frac{\epsilon}{2} \right) \leq C_0e^{-c_0t},
 \end{align} completes the proof of the theorem. 
\end{proof}

\section{Random walk on the contact process}\label{sec contact Markus}

\subsection{Preliminaries}\label{sec contact properties}

A c\`adl\`ag version of the contact process can be constructed from a graphical representation in the following standard way. For this, let $H:= (H(x))_{x \in \mathbb{Z}^d}$ and $I:=(I(x,e))_{x,e\in\mathbb{Z}^d \colon \norm{e}_1=1}$ be two independent collections of i.i.d Poisson processes with rate $1$ and $\lambda$, respectively.
On $\integers^d \times [0,\infty)$, draw the events of $H(x)$ as \emph{crosses} over $x$ and the events of $I(x,e)$ as \emph{arrows} from $x$ to $x+e$.

For $x,y \in \integers^d$ and $0 \leq s \leq t$, we say that $(x,s)$ and $(y,t)$ are connected, written $(x,s) \leftrightarrow (y,t)$, if and only if there exists a directed path in $\integers^d \times [0,\infty)$ starting at $(x,s)$, ending at $(y,t)$ and going either forward in time without hitting crosses or ``sideways'' following arrows in the prescribed direction. For $A \subset \mathbb{Z}^d$ and $s \in [0,\infty)$, define the set at time $t>s$ connected to $(A,s)$ in the graphical representation by
\begin{equation}\label{eq Ctilde}
\mathcal{C}_t(A,s) := \set{ y \in \integers^d \colon \text{ there exist } x\in A \text{ such that } (x,s) \leftrightarrow (y,t)}.
\end{equation}
When $A = \{x\}$ for some $x \in \mathbb{Z}^d$, we write $C_t(x,s)$ for simplicity. 
Note that this construction allows us to couple copies of the contact processes starting from different configurations. For each $A \subset \integers^d$ denote by $(\xi_t^A)_{t \geq 0}$ the process with initial configuration $\xi_0^A(x) =\ind_A$ and, for all  $y\in \mathbb{Z}^d$ and $t > 0$,
\begin{equation} \xi_t^A(y) = \ind_{\mathcal{C}_t(A,0)}(y) .\end{equation}
Let $(\xi_t^{A,\lambda})_{t \geq 0}$ denote the contact process with starting configuration $A \subset \mathbb{Z}^d$ and infection parameter $\lambda>0$.

The following monotonicity property is a direct consequence of the graphical construction. 
\begin{lem}[Monotonicity property]\label{lem con mon}
The contact process $(\xi_t^{A,\lambda})_{t \geq 0}$ has an attractive graphical representation coupling, which is stochastically monotone in $A$ and in $\lambda$.
\end{lem}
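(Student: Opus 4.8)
The plan is to verify that the standard graphical construction of the contact process, recalled just above the statement, directly exhibits the monotonicity property $\eqref{eq agr}$ and that it behaves monotonically when the initial set $A$ or the infection parameter $\lambda$ is increased. First I would fix the collections $H=(H(x))_{x\in\mathbb Z^d}$ and $I=(I(x,e))$ of Poisson processes and take them as the realisation of $\mathcal I$ in the definition of $\widehat P$; that is, I would let $\widehat P$ be the law of $(H,I)$ and note that, by construction, for every $A\subset\mathbb Z^d$ the process $\xi^A_t(y)=\ind_{\mathcal C_t(A,0)}(y)$ is a c\`adl\`ag version of the contact process started from $\ind_A$, all coupled on one probability space. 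Since for an arbitrary $\eta\in\Omega$ one sets $A=\{x:\eta(x)=1\}$, this gives a coupling of $(\xi^\eta)_{\eta\in\Omega}$.

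The key observation is the elementary set-inclusion statement: if $A\subseteq B\subseteq\mathbb Z^d$, then for every $t\ge0$,
\begin{equation}\label{eq Cmono plan}
\mathcal C_t(A,0)\subseteq\mathcal C_t(B,0),
\end{equation}
because any directed infection path in the graphical representation starting from a point $(x,0)$ with $x\in A$ also starts from $A\subseteq B$, so every site reachable from $A$ is reachable from $B$. This is immediate from the definition $\eqref{eq Ctilde}$ of $\mathcal C_t$. Translating $\eqref{eq Cmono plan}$ into configurations: $\eta\le\omega$ in $\Omega$ corresponds to $A_\eta\subseteq A_\omega$, and then $\xi^\eta_t(y)=\ind_{\mathcal C_t(A_\eta,0)}(y)\le\ind_{\mathcal C_t(A_\omega,0)}(y)=\xi^\omega_t(y)$ for all $y$ and all $t$, which is exactly $\eqref{eq agr}$ holding $\widehat P$-a.s. (in fact surely). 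Hence $\widehat P$ is an attractive graphical representation coupling, and monotonicity in $A$ is literally the same statement $\eqref{eq Cmono plan}$.

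For monotonicity in $\lambda$, I would use the standard thinning construction: realise the arrow processes for the largest parameter $\lambda'$ under consideration, and obtain the arrows for a smaller $\lambda<\lambda'$ by independently retaining each arrow event with probability $\lambda/\lambda'$ (equivalently, attach i.i.d.\ {\sc unif}$[0,1]$ marks to the $I(x,e)$-events and keep, at level $\lambda$, those with mark $\le\lambda/\lambda'$). The crosses $H(x)$ are the same for all $\lambda$. Then the set of directed paths available at level $\lambda$ is a subset of those available at level $\lambda'$, so $\mathcal C^\lambda_t(A,0)\subseteq\mathcal C^{\lambda'}_t(A,0)$ for all $A$ and $t$, giving $\xi^{A,\lambda}_t\le\xi^{A,\lambda'}_t$ pointwise; combined with $\eqref{eq Cmono plan}$ this yields joint monotonicity in $(A,\lambda)$. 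I do not expect a genuine obstacle here — the whole statement is a direct consequence of the graphical construction — the only point requiring a line of care is making explicit that one fixes a single probability space (the Poisson processes with the richest parameters, plus thinning marks) on which all the coupled processes are simultaneously defined, so that the inclusions of path-sets, and hence the inequalities between configurations, hold simultaneously for all $t$, all $A$, and all $\lambda$.
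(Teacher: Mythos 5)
Your argument is correct and is exactly the standard reasoning the paper relies on: the paper states the lemma without proof as a direct consequence of the graphical construction, and your path-set inclusion $\mathcal C_t(A,0)\subseteq\mathcal C_t(B,0)$ for $A\subseteq B$ together with the thinning coupling of the arrow processes for $\lambda\le\lambda'$ is precisely the intended justification. No issues.
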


We next recall  the self-duality property which is used in the proof of Theorem \ref{prop contact process}a) and b). 
For this, define the backwards process $(\hat{\xi}_s^{A,t})_{0 \leq s \leq t}$ given $A \subset \mathbb{Z}^d$ and $t>0$ by
\begin{equation}
\hat{\xi}_s^{A,t}(x) = 
\begin{cases} 1\qquad& \text{if there exists $y\in A$ such that }  y \in \mathcal{C}_{t}(x,t-s);\\
0&\text{otherwise.}\end{cases}
\end{equation}
Then, the distribution of $(\hat{\xi}_s^{A,t}(x))_{s \geq 0}$ is the same as that of the contact process with the same initial configuration. Moreover, the backwards process and the contact process satisfy the duality equation. Namely,
\begin{equation} \xi_t^A \cap B \neq \emptyset \text{ if and only if } A \cap \hat{\xi}_t^{B,t} \neq \emptyset, \quad \text{ for any } A,B \subset \integers^d.\end{equation}

\subsection{Proof of Theorem \ref{prop contact process}a)}\label{sec contact coupling}

Theorem \ref{prop contact process}a) for the contact process started from $\bar{1}$ is an immediate consequence of 
Theorem \ref{thm main} and Lemma \ref{lem con mon}. We next show how to extend this to all measures stochastically dominating $\bar{\nu}_{\lambda}$. The proof goes by showing that \eqref{eq conemixing} in Theorem \ref{prop coupling speed} holds. Actually, our proof of \eqref{eq conemixing} is more general and applies to the contact process started from any measure stochastically dominating  a non-trivial Bernoulli-product measure. For this, we first state and prove two lemmas.

\begin{lem}
\label{lem dual}
For $\lambda > \lambda_c(d)$  there exist constants $a,C,c>0$ such that for all $t >0$,
\begin{equation}
\widehat{P} \left( |\mathcal{C}_t(o,0)| \leq at \mid \mathcal{C}_t(o,0) \neq \emptyset \right) \leq Ce^{-ct},
\end{equation}
\end{lem}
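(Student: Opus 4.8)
Write $\tau^o := \sup\{t\ge 0 : \mathcal{C}_t(o,0)\neq\emptyset\}$ for the extinction time of the cluster emanating from $(o,0)$. The plan is first to remove the conditioning, then to split according to whether this cluster survives forever. Since the events $\{\mathcal{C}_t(o,0)\neq\emptyset\}$ are nested decreasing in $t$ (any directed path realising $\mathcal{C}_t(o,0)\neq\emptyset$ passes through a site alive at every earlier time), the map $t\mapsto\widehat{P}(\mathcal{C}_t(o,0)\neq\emptyset)$ is non-increasing with limit $\theta(\lambda):=\widehat{P}(\tau^o=\infty)$; by the self-duality recalled in Section~\ref{sec contact properties} together with the definition of $\lambda_c(d)$ one has $\theta(\lambda)=\bar{\nu}_{\lambda}(\eta(o)=1)>0$ for $\lambda>\lambda_c(d)$. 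Hence $\widehat{P}(\mathcal{C}_t(o,0)\neq\emptyset)\ge\theta(\lambda)>0$ for all $t$, so it suffices to produce $a,C,c>0$ with
\[ \widehat{P}\big(0<|\mathcal{C}_t(o,0)|\le at\big)\le Ce^{-ct},\qquad t>0, \]
and I would bound this event by $\{t\le\tau^o<\infty\}\cup\big(\{\tau^o=\infty\}\cap\{|\mathcal{C}_t(o,0)|\le at\}\big)$, handling the two pieces separately.

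For $\{t\le\tau^o<\infty\}$ I would invoke the classical fact that in the supercritical regime extinction, when it occurs, occurs exponentially fast: $\widehat{P}(t\le\tau^o<\infty)\le C_1e^{-c_1 t}$ for suitable $C_1,c_1>0$. This is a standard consequence of the Bezuidenhout--Grimmett block construction (see e.g.\ \cite{LiggettSIS1999}) and I would simply quote it.

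For $\{\tau^o=\infty\}\cap\{|\mathcal{C}_t(o,0)|\le at\}$ I would use the comparison of the supercritical contact process with supercritical oriented percolation. Fix $\lambda>\lambda_c(d)$. By the block construction there are scales $L,T<\infty$ such that, once a spatial box of side $L$ is fully infected, the contact process dominates on the associated space-time blocks an oriented percolation process which is supercritical. A restart argument then shows that on $\{\tau^o=\infty\}$ the first time $\sigma$ at which such a fully infected box is produced (and which seeds a surviving oriented percolation branch) has an exponential tail, $\widehat{P}(\tau^o=\infty,\,\sigma>t/2)\le C_2e^{-c_2 t}$, since the number of failed restart attempts is stochastically dominated by a geometric random variable and each attempt takes bounded time. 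On $\{\sigma\le t/2\}$ the set $\mathcal{C}_t(o,0)$ contains a copy of the wet sites of a supercritical oriented percolation cluster that has survived through a number $n$ of levels of order $t/(2T)$, and a standard large deviation estimate for oriented percolation gives that such a surviving cluster occupies at least $\delta n$ sites except with probability $\le C_3e^{-c_3 n}$. Taking $n$ of order $t/(2T)$ and $a:=\delta/(4T)$ then yields $\widehat{P}(\tau^o=\infty,\,|\mathcal{C}_t(o,0)|\le at)\le C_4e^{-c_4 t}$, completing the proof.

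The crux is this last step: everything in it is by now standard, but it requires carefully invoking the Bezuidenhout--Grimmett block comparison, the geometric control on the number of restarts, and the oriented-percolation large deviation bound on the number of wet sites, and then transferring the latter back to $|\mathcal{C}_t(o,0)|$ while keeping all error terms exponentially small in $t$. In the write-up I would expect these to be stated as known inputs with references rather than reproved; alternatively, one could cite directly a shape/linear-growth result for the supercritical contact process conditioned on survival, from which the desired bound follows with only minor bookkeeping, the model-specific ingredients being precisely the graphical representation and self-duality of Section~\ref{sec contact properties}.
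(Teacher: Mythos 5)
Your argument is correct in outline, but it takes a genuinely different and much heavier route than the paper. Both proofs rest on the same single input, the exponential bound on extinction given survival up to time $t$ (\cite[Theorem I.2.30]{LiggettSIS1999}), which is exactly your bound on $\{t\le\tau^o<\infty\}$. The paper, however, never needs to control the surviving cluster at all: it observes that on the event $\{\mathcal{C}_t(o,0)\neq\emptyset,\ |\mathcal{C}_t(o,0)|\le at\}$ the process goes extinct with conditional probability at least $e^{-c_2at}$ (by the Markov property, let each of the at most $at$ particles present at time $t$ die before giving birth), so that $\widehat{P}\left(|\mathcal{C}_t(o,0)|\le at \mid \mathcal{C}_t(o,0)\neq\emptyset\right)e^{-c_2at}$ is dominated by the conditional extinction probability $\le Ce^{-c_1t}$, and then chooses $a<c_1/c_2$. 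This two-line comparison replaces your entire second step: no Bezuidenhout--Grimmett block construction, restart argument, or oriented-percolation density large deviations are required. Your route is viable --- it essentially re-derives the known linear-growth-on-survival estimate --- but as written it quotes several heavy inputs and leaves genuine bookkeeping open: a failed restart attempt does not take bounded time (its duration only has an exponential tail, which still suffices but must be said), and wetness at the top block level certifies infected, connected sites at times near, not exactly at, $t$, so an extra persistence/binomial step is needed before you can lower bound $|\mathcal{C}_t(o,0)|$ itself. The trade-off is clear: your approach, once completed, yields more (a linear lower bound on the surviving cluster), whereas the paper's conditioning trick yields precisely Lemma \ref{lem dual} from one quoted estimate and an elementary computation.
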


\begin{proof}
By  \cite[Theorem I.2.30]{LiggettSIS1999}, for some constants $C,c_1>0$,
\begin{equation}\label{eq lemma 4.2 help me again}
	\widehat{P} \left(\sup _{s\ge0} |\mathcal{C}_s(o,0)| <\infty \mid \mathcal{C}_t(o,0) \neq \emptyset \right) \leq Ce^{-c_1t}.
\end{equation} 
Let $c_2>0$ be the number such that $e^{-c_2}=1/(1+\lambda)$, where $e^{-c_2}<1$ is the probability 
that an occupied site becomes vacant before producing any offsprings.
The probability on the left hand side of \eqref{eq lemma 4.2 help me again} is clearly bounded from below by the probability that $|\mathcal{C}_t(o,0)|\leq at$ and each of the particles in $\mathcal{C}_t(o,0)$ dies before producing further offsprings. Since the contact process is Markovian and attractive, 
\begin{equation} \label{eq lemma 4.2 help me again2}
	\widehat{P} \left(\sup _{s\ge0} |\mathcal{C}_s(o,0)| <\infty \mid \mathcal{C}_t(o,0) \neq \emptyset \right) 
	\geq \widehat{P} \left(|\mathcal{C}_t(o,0)| \leq at \mid \mathcal{C}_t(o,0) \neq \emptyset \right) e^{-c_2at}.
\end{equation}
Combining the two bounds  \eqref{eq lemma 4.2 help me again} and  \eqref{eq lemma 4.2 help me again2}, we obtain the result for $c=c_1-ac_2$, and $c>0$ if and only if $a<c_1/c_2$. 
\end{proof}

Using the duality property together with Lemma \ref{lem dual} we obtain the following estimate.

\begin{lem}[Coupling of the contact process]\label{lem coupling of CP}
Let $\mu_{\rho} \in \mathcal{P}(\Omega)$ be the Bernoulli product measure with density $\rho>0$. For the contact process with $\lambda>\lambda_c(d)$, there exist constants $C,c>0$ such that
\begin{equation}\label{eq coupling of CP}
\widehat{P} \left( \xi_t^{(\mu_{\rho})}(o) \neq \xi_t^{(\delta_{\bar{1}})}(o) \text{ for some } t \in [T,T+1)  \right) \leq C e^{-cT}.
\end{equation}
\end{lem}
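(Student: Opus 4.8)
The plan is to exploit the coupling of contact processes through the common graphical representation (Lemma~\ref{lem con mon}): since $\mu_\rho \leq \delta_{\bar 1}$ in the stochastic order, one can realise $\xi^{(\mu_\rho)}$ and $\xi^{(\delta_{\bar 1})}$ on the same graphical space so that $\xi_t^{(\mu_\rho)} \leq \xi_t^{(\delta_{\bar 1})}$ for all $t$, and hence $\xi_t^{(\mu_\rho)}(o) \neq \xi_t^{(\delta_{\bar 1})}(o)$ can only happen when $\xi_t^{(\delta_{\bar 1})}(o)=1$ but $\xi_t^{(\mu_\rho)}(o)=0$. The event that the discrepancy at $o$ occurs somewhere in the window $[T,T+1)$ should be controlled by the event that $o$ is not connected (in the graphical representation, using only crosses/arrows) to any initially occupied site of $\xi^{(\mu_\rho)}$ during that window. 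I would first reduce to a single time, say time $T$: using that each site flips only at rate bounded by $1+2d\lambda$, the probability that some discrepancy appears in $[T,T+1)$ but not already at time $T$ is controlled by the expected number of graphical events near $o$ in that unit window, which is $O(1)$; more cleanly, one bounds the window event by the time-$T$ discrepancy event for the process started slightly earlier, or simply absorbs the unit-length window into the constants since an extra unit of time changes the exponential bound only by a factor $e^{c}$. So it suffices to show $\widehat P(\xi_T^{(\mu_\rho)}(o) \neq \xi_T^{(\delta_{\bar1})}(o)) \leq C e^{-cT}$.

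**Dual argument.** For the time-$T$ discrepancy I would use self-duality. The event $\{\xi_T^{(\delta_{\bar1})}(o)=1,\ \xi_T^{(\mu_\rho)}(o)=0\}$ means: the backward (dual) cluster $\hat\xi_\cdot^{\{o\},T}$ started at $(o,T)$ and run down to time $0$ survives to time $0$ (so $o$ is occupied under $\bar 1$), yet the set $\hat\xi_T^{\{o\},T} \subset \mathbb Z^d$ of sites reached at the bottom contains no site that is initially occupied in the $\mu_\rho$-configuration. Since the $\mu_\rho$-configuration is an independent Bernoulli$(\rho)$ field, conditionally on the dual cluster reaching a set $A = \mathcal C_T(o,0)$ at the bottom (in dual terms), the probability that $A$ misses every occupied site is exactly $(1-\rho)^{|A|}$. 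Therefore
\begin{align*}
\widehat P\big(\xi_T^{(\delta_{\bar1})}(o)=1,\ \xi_T^{(\mu_\rho)}(o)=0\big)
&= \widehat{E}\big[(1-\rho)^{|\mathcal C_T(o,0)|}\,;\ \mathcal C_T(o,0)\neq\emptyset\big].
\end{align*}
Now Lemma~\ref{lem dual} does the rest: split according to whether $|\mathcal C_T(o,0)| \leq aT$ or not. On $\{|\mathcal C_T(o,0)| > aT\}$ we get $(1-\rho)^{|\mathcal C_T(o,0)|} \leq (1-\rho)^{aT} = e^{-a T \log(1/(1-\rho))}$, an exponential bound; on the complementary event $\{|\mathcal C_T(o,0)| \leq aT\} \cap \{\mathcal C_T(o,0) \neq \emptyset\}$, Lemma~\ref{lem dual} gives probability at most $Ce^{-cT}$, and we bound $(1-\rho)^{|\mathcal C_T(o,0)|} \leq 1$. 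Adding the two contributions yields $\widehat P(\xi_T^{(\delta_{\bar1})}(o) \neq \xi_T^{(\mu_\rho)}(o)) \leq C' e^{-c' T}$ for suitable constants, and reinstating the unit-length time window $[T,T+1)$ costs only a bounded multiplicative factor.

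**Main obstacle.** The routine parts are the duality identity and the Bernoulli conditioning, which are standard; the conceptually delicate point is handling the time interval $[T,T+1)$ rather than a fixed time, i.e.\ making precise that a discrepancy somewhere in a unit window is essentially as unlikely as a discrepancy at a fixed time. I would handle this by noting that $\{\exists\, t\in[T,T+1): \xi_t^{(\mu_\rho)}(o)\neq\xi_t^{(\delta_{\bar1})}(o)\}$ is contained in the event that the backward cluster from the space-time box $\{o\}\times[T,T+1)$ down to time $0$ either dies out or fails to hit an occupied site — and such a backward cluster dominates (and is dominated by, up to the bounded local Poisson activity in the extra unit of time) the single-time dual cluster, so the same two-case estimate via Lemma~\ref{lem dual} applies with adjusted constants. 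Alternatively one can simply use a union bound over the (a.s.\ finitely many, Poisson-many) jump times of the relevant graphical events in $[T,T+1)$ near $o$, each contributing a fixed-time estimate. Either way the exponential rate is preserved.
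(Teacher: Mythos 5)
Your proposal is correct and takes essentially the same route as the paper's proof: by the monotone coupling a discrepancy at time $T$ means the dual cluster at $(o,T)$ is non-empty yet misses every initially occupied site of the Bernoulli field, and you bound this by splitting on $\{|\hat{\mathcal{C}}_T(o,T)|\leq aT\}$ via Lemma \ref{lem dual} together with the factor $(1-\rho)^{aT}$ on the complementary event. Your handling of the window $[T,T+1)$ by a union bound over the Poisson-many arrow events into $o$ is also exactly what the paper does (yielding its factor $1+2d\lambda$).
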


\begin{proof}
By attractiveness, $\xi_T^{(\mu)}(o) \leq \xi_T^{(\bar{1})}(o)$ for all $\mu \in \mathcal{P}(\Omega)$. In particular, we have that $\xi_T^{(\mu_{\rho})}(o) \neq \xi_T^{(\bar{1})}(o)$ if and only if the connected set of the dual process at time $T$ started at $(o,T)$, denoted by $\hat{\mathcal{C}}_T(o,T)$, is non-empty and $\xi_0^{(\mu)}(x) = 0$ for all $x \in \hat{\mathcal{C}}_T(o,T)$.
That is, 
\begin{equation}\label{eq help contact}
\widehat{P} \left( \xi_T^{(\mu_{\rho})}(o) \neq \xi_T^{(\delta_{\bar{1}})}(o) \right) = \widehat{P} \left( \set{\hat{\mathcal{C}}_T(o,T)\neq \emptyset} \cap \set{ \xi_0^{(\mu_{\rho})}(x) =0 \: \forall \: x \in \hat{\mathcal{C}}_T(o,T)} \right). 
\end{equation}
By Lemma \ref{lem dual} 
and self-duality of the contact process, we can estimate the size of $\hat{\mathcal{C}}_T(o,T)$. In particular, for certain constants $C_1,c_1,C_2,c_2>0$,
\begin{align*}
&\widehat{P}\left( \xi_0^{(\mu_{\rho})}(x) =0 \: \forall \: x \in \hat{\mathcal{C}}_T(o,T) \mid  
\hat{\mathcal{C}}_T(o,T)\neq \emptyset \right) \\
&\leq \widehat{P} \left( |\hat{\mathcal{C}}_T(o,T) \cap B(o,rT)| \leq aT  \mid  
\hat{\mathcal{C}}_T(o,T)\neq \emptyset \right) \\
&\qquad + \widehat{P} \left({ \xi_0^{(\mu_{\rho})}(x) =0 \: \forall \: x \in \hat{\mathcal{C}}_T(o,T)}  \mid |\hat{\mathcal{C}}_T(o,T) \cap B(o,rT)| \geq aT \right)
\\ 
&\leq  C_1e^{-c_1T} + C_2e^{-c_2T}.
\end{align*}
Thus, the l.h.s.\ of \eqref{eq help contact} decays exponentially (in $T$). To conclude  \eqref{eq coupling of CP}, by the graphical representation, it is sufficient to control the times at which there is an arrow events $I(e,o)$ from $e \in \mathbb{Z}^d$ with $\norm{e}_1=1$. Note that the number of such events is  Poisson distributed with parameter $2d\lambda$. In particular, 
\[ \widehat{P} \left( \xi_t^{(\mu_{\rho})}(o) \neq \xi_t^{(\delta_{\bar{1}})}(o) \text{ for some } t \in [T,T+1) \right) \leq (1+2d\lambda) \left(C_1e^{-c_1T} + C_2e^{-c_2T}\right).\]
\end{proof}

The proof of Theorem \ref{prop contact process}a) follows as a consequence of Lemma \ref{lem coupling of CP} and Theorem \ref{prop coupling speed}.

\begin{proof}[Proof of Theorem \ref{prop contact process}a)]
 Let $\mu_{\rho} \in \mathcal{P}(\Omega)$ be a non-trivial Bernoulli product measure, let $m,T \in (0,\infty)$ and consider $V_m(T) := V_m \cap \left( \integers^d \times [T,T+1) \right)$ with $V_m$  as defined in \eqref{eq cone m}. Since the contact process is attractive and translation invariant,
\begin{align*}
&\widehat{P} \left( \exists (x,t) \in V_m(T): \: \xi_t^{(\mu)}(x) \neq \xi_t^{(\delta_{\bar{1}})}(x) \right)
\\  \leq &(m(T+1))^d \widehat{P} \left( \xi_t^{(\mu)}(o) \neq \xi_t^{(\delta_{\bar{1}})}(o) \text { for some }t \in [T,T+1) \right).
\end{align*}
Hence, by Lemma \ref{lem coupling of CP}, for some constant $C >0$,
\begin{align*}
&\widehat{P} \left( \exists (x, t) \in V_m \cap \left( \integers^d \times [T,\infty) \right) \colon \xi_t^{(\mu)}(x) \neq \xi_t^{(\delta_{\bar{1}})}(x) \right)
\\ \leq &C \sum_{k =0}^{\infty} (T+  k)^de^{-c(T+ k)}, 
 \end{align*}
and this vanishes as $T \rightarrow \infty$.
Hence, since $m$ was arbitrary chosen, the contact process started from $\mu_{\rho}$ satisfies equation \eqref{eq conemixing} for any $m \in [0,\infty)$. Evoking Theorem \ref{prop coupling speed} this  completes the proof of Theorem \ref{prop contact process}a), noting that $\bar{\nu}_{\lambda}$ stochastically dominates a non-trivial Bernoulli product measure, as shown in \cite[Corollary 4.1]{LiggettSteifSD2006}, and by using Lemma \ref{lem con mon}.
\end{proof}

\subsection{Proof of Theorem \ref{prop contact process}b)}\label{sec contact a}
In the remaining part of this article we present the proof of Theorem \ref{prop contact process}b). We start by showing that $\rho(\cdot)$ is non-decreasing and right-continuous.

\begin{proof}[Proof of monotonicity and right continuity of $\lambda\mapsto\rho(\lambda)$]

Monotonicity of $\lambda\mapsto\rho(\lambda)$ follows directly by the coupling construction in Section \ref{sec construction}  and the graphical representation of the contact process, Lemma \ref{lem mono} and Lemma \ref{lem con mon}.

For right-continuity, let $\lambda \in (0,\infty)$ and denote by $\xi(\lambda)$ the corresponding contact process.
For $T>0$, let
\begin{equation}
f(T,\lambda) := \frac{1}{T} \widehat{\mathbb{E}}_{\delta_{\bar{1}},o}\big[\rho_{T}(\xi(\lambda)) \big].
\end{equation}
By Theorem \ref{cor main}, it follows that $f(T,\lambda) \downarrow \rho(\lambda)$ as $T \rightarrow \infty$. Moreover, by Lemma \ref{lem mono} and Lemma \ref{lem con mon}, $f(T,\lambda)$ is also non-decreasing in $\lambda$. 
Hence, $\lambda\mapsto\rho(\lambda)$ is right-continuous as the decreasing limit of non-decreasing continuous functions provided  that $\lambda\mapsto f(T,\lambda)$ is continuous for any fixed $T>0$. 

To see that $f(T,\lambda)$ is continuous in $\lambda$, note first that in order to determine the behaviour of $W_t$ for $t \in [0,T]$ we only need to consider the contact process in a finite space-time box. This follows by large deviation estimates on $N_T$ and our restriction on the transition rates, i.e.\  $\norm{u_0}_1,\norm{u_1}_1<\infty$. By the weak law of large numbers, this suffices to conclude that the probability of the walker escaping a box of size $L$ within time $T$ converges to $0$ as $L \rightarrow \infty$. Continuity of $f(T,\lambda)$ now follows by using the graphical representation of the contact process and standard arguments for functions on the contact process in a finite space-time region (see e.g. the discussion on page 40 in \cite{LiggettSIS1999}).
\end{proof}

We continue with the proof of \eqref{eq nontrivial rho}. It is not difficult to see that $\rho(\lambda) <1$ for any $\lambda \in (0, \infty)$, since vacant sites appear independently. This was already observed in den Hollander and dos Santos \cite{HollanderSantosRWCP2013} for $d=1$, 
  and their argument transfers directly to higher dimensions.

However, in order to show that $\rho(\lambda) >0$, the arguments in \cite{HollanderSantosRWCP2013} do not carry over. 
In brief, they essentially use that there is a positive density of ``waves'' of particles moving from the right to the left in space-time. Using the ordering of $\integers$ and monotonicity in the displacement of a nearest neighbour random walk, the random walk cannot escape less than a positive proportion of the waves. Due to the one-dimensional nature of this argument it does not carry over to general dimensions nor does it extend beyond nearest neighbour jumps.

Our proof is based on monotonicity of $\rho_{t}(\xi)$.
We propose here a simple strategy which also generalises to many other monotone dynamics. In particular, by applying the theory in \cite{BezuidenhoutGrayCASS1994}, our argument for $d\geq 2$ seems to extend to all super-critical additive spin-flip systems. 
For dimension $d=1$, we use an improved version of the corresponding proof in \cite{HollanderSantosRWCP2013}. For both the one-dimensional and higher dimensional cases we make use of the general construction in Section \ref{sec construct general}.

\begin{proof}[Proof of \eqref{eq  nontrivial rho} for $d=1$]

 Let $\lambda > \lambda_c(1)$ and consider the contact process $(\xi_t)$ on $\integers$ with infection parameter $\lambda$ and initial configuration drawn according to $\mu \in \mathcal{P}(\Omega)$, where $\mu$ is the distribution of \[ \eta = \eta' \cdot \ind_{(-\infty,0)}\text{, where }\eta' \sim \bar{\nu}_{\lambda}.\] 
 Further, for $0\leq s \leq t$ and $z\in \integers$, denote by
\begin{equation}
r_{s,t}(z) := \sup \{ y \in \integers \colon  \xi_s(x)=1 \text{ for some } x \leq z \text{ and } (x,s) \leftrightarrow (y,t) \}
\end{equation}
the rightmost site that is occupied at time $t$  by a particle and connected to a site to the left of $z$ at time $s$. It is well known that there exists $\alpha>0$ (depending on $\lambda$) such that
\begin{equation}\label{eq rightmost speed}
\lim_{t \rightarrow \infty} \frac{1}{t} r_{0,t}(o) = \alpha \quad P^{\mu}-a.s.
\end{equation}
See \cite[Theorem VI.2.19]{LiggettIPS1985}  for a proof of \eqref{eq rightmost speed} with respect to $P^{\delta_{\bar{1}}}$.
The extension to $P^{\mu}$ follows from this statement and standard coupling arguments (e.g.\ \cite[Theorem VI.2.2]{LiggettIPS1985}). 

(Recall the general construction of $(W_t)$ in Section \ref{sec construct general}). We next specify the family of Boolean functions $(f_k)_{k \geq 0}$, which is defined by an iterative procedure involving a second process  of elements in $\integers$ denoted by $(R_k)$. 
 Assume w.l.o.g.\ that $u_0 \leq 0$.
Let  $R_0= r_{0,J_0}(o)$ be the position of the rightmost particle of the contact process at the first jump time of the random walk. Recall that $S_0=o$, and define iteratively for $k \geq 0$,
\begin{align}
f_k:= &\begin{cases} \xi_{J_{k}}(S_{k}) \qquad&\text{if } S_{k} \leq R_{k};\\
 0&\text{otherwise}.\end{cases}
 \\ R_{k+1}:=&\begin{cases} r_{J_{k},J_{k+1}}(S_{k}) \qquad &\text{if } S_{k} \leq R_{k};
\\ r_{J_{k}, J_{k+1}} (R_{k}) &\text{otherwise}.\end{cases}
\end{align}
Hence, if $r_{0,J_0}(o) \geq 0$, then $R_1$ is assigned the position of the rightmost particle which at time $J_1$ is connected to an occupied site to the left of  $o$ at time $J_0$, the first jump time. Further, in this case, $f_0$ is assigned the value of the contact process at the location of the random walk  at the $1$'st jump time. Thus, the random walk ``observes'' the environment and chooses its transition kernel accordingly.
Otherwise, if $r_{0,J_0}(o) < 0$, $R_1$ is assigned the position of the rightmost particle which at time $J_1$ is connected to an occupied site to the left  of $r_{0,J_0}(o)$ at time $J_0$ and  $f_0$ is assigned the value $0$. Consequently, for this latter case, the random walk jumps as if it had observed a vacant site.

For arbitrary $k \geq 0$, if $S_{k} \leq R_{k}$, then $R_{k+1}$ is assigned the location of the rightmost site at  the $(k+2)$'th jump time which is connected to an occupied site to the left of $S_{k}$ at the $(k+1)$'th jump time, and in this case $f_k$ is assigned the value $\xi_{J_{k}}(S_{k})$. 
On the other hand, if $S_{k-1} > R_{k-1}$, $R_{k+1}$ is a prolongation of $R_{k}$  and we set $f_k=0$. 
By construction, and using that $\mathbb{Z}$ is ordered and the contact process has nearest neighbour interactions, the following holds: at times $k$ for which $S_k \leq R_k$ there is a connected path $(\omega_t)_{0\leq t \leq J_{k}}$ such that $\xi_0(\omega_0)=1$, $\omega_l \leq S_l$ for all $l \leq k-1$ and $\omega_k \geq S_k$. 
Furthermore, since $f_k$ is the product of an indicator function and $\xi_{J_{k}}(S_{k})$, we have  $\rho^{(1)}(k, \xi)\leq \rho(k, \xi)$, where $\rho^{(1)}(k,\xi)= \sum_{i=0}^{k-1} f_i$ and $\rho(k,\xi)$ is as in Section \ref{sec construction}. By the last observation, in order to prove the first part of \eqref{eq nontrivial rho}, it is sufficient to show that there exists $\rho^{(1)}>0$, depending on $\lambda$, such that
\begin{equation}\label{eq positive density 1}
\liminf_{k \rightarrow \infty} \frac{1}{k} \rho^{(1)}(k,\xi) \geq \rho^{(1)}, \quad \widehat{\Prob}_{\mu,o}-a.s.
\end{equation}
 For this, let $T_0=0$ and, for $k \geq 1$, let
 \begin{equation}
 T_k := \inf \{ n > T_{k-1} \colon S_{n-1} \leq R_{n-1}\}
 \end{equation}
denote the $k$'th time that the random walk observes the environment and set 
 \begin{equation}\label{eq tau}
 \tau_k := T_k - T_{k-1}, \qquad k\ge1.
 \end{equation}
 As noted above, at times $T_k$, 
  there is a connected path $(\omega_t)_{0\leq t \leq J_{k}}$ such that $\xi_0(\omega_0)=1$, $\omega_l \leq S_l$ for all $l \leq k-1$ and $\omega_k \geq S_k$. 
 At such times, the law of $(\xi_{J_{k-1}}(x))_{x \leq z}$ stochastically dominates $\bar{\nu}_{\lambda}$, as shown in  \cite[Lemma 4.1]{HollanderSantosRWCP2013}. Consequently, since $\tau_k$ is decreasing with respect to the configuration of the contact process on sites strictly to the left of $S_{T_{k-1}}$, the  times $(\tau_k)_{k \geq 1}$  are dominated by an i.i.d.\ sequence of $\tau_0$ distributed random variables. Furthermore, for the same reason, at times $T_k$, the random walk has a probability of at least  $\bar{\nu}_{\lambda}(\eta(o)=1)>0$ of observing an occupied site. 
Thus, the first part of  \eqref{eq nontrivial rho} follows once we have shown that 
\begin{equation}\label{eq tau finite mean}
\widehat{\mathbb{E}}_{\nu_{\lambda, o}}(\tau_0) < \infty.
\end{equation}
For this, note that, since $\{ \tau_0\geq n\} = \{ S_{k-1} \geq R_{k-1} \text { for all } k \leq n\}$,
\begin{align}\label{eq tau finite mean help}
\widehat{\Prob} \left( \tau_0 \geq n \right) \leq \widehat{\Prob} \left( S_{n-1} \geq \beta (n-1), \tau_0\geq n \right) + \widehat{\Prob} \left( R_{n-1} \leq \beta (n-1) \right), 
\end{align}
for any $\beta>0$. For any $\beta <\alpha$ (with $\alpha$ as in \eqref{eq rightmost speed}), the rightmost term in \eqref{eq tau finite mean help} decays exponentially (in $n$) due to large deviation estimates for $r_{0,t}(o)$; see \cite[Corollary 3.22]{LiggettIPS1985}. Moreover, for $\beta>0$, the leftmost term of \eqref{eq tau finite mean help} decays like $n^{-2}$ as  $n \to \infty$. This follows  by applying Chebyshev's inequality, using that $(W_t)$ has finite second moments, and since, under $\tau_0>n$, the random walk $(S_k)_{0\leq k \leq n}$ behaves as a simple random walk in a $0$-homogeneous environment. Consequently, by setting $0<\beta <\alpha$, \eqref{eq tau finite mean} holds and this concludes the first part of  \eqref{eq nontrivial rho}. 

To conclude the  second part of \eqref{eq nontrivial rho}, we note that $\alpha=\alpha(\lambda)$ in \eqref{eq rightmost speed} diverges to $\infty$ as $\lambda \to \infty$. In particular, by reasoning as above and choosing $\beta = \alpha/2$, we have that  $ \lim_{\lambda \rightarrow \infty} \widehat{\Prob} \left( \tau_0 \geq 2 \right)=0$.  Moreover, since $\lim_{\lambda\rightarrow\infty} \bar{\nu}_{\lambda}(\eta(o)=1)=1$, it follows that $\rho^{(1)}$ in \eqref{eq positive density 1} approaches $1$ as $\lambda \rightarrow \infty$ from which, by the remark above \eqref{eq positive density 1}, we  conclude the proof.
  \end{proof}

\textit{We proceed with the proof of \eqref{eq nontrivial rho} for the case $d\geq 2$.} For this, we make use of the fact that the supercritical contact process survives in certain space-time slabs, as first shown in  \citet{BezuidenhoutGrimmettCP1990}. For the cases when $u_0\neq o$ their result suffices. However, in order to also treat the special case when $u_0 =o$, we use an extension of their theorem to certain tilted slabs. 

For $K \in \nat$, $L \in \reals$ and $A\subset \mathbb{Z}^d$, denote by $\left(_K^L \xi_t^A\right)_t$ the \emph{truncated} contact process defined via the graphical representation by
\begin{equation}
_K^L \xi_t^A(x) = \begin{cases} 1 \quad&\text{if }\{ (y,0) \colon y \in A\} \text{ is connected to } (x,t)
\text{ within } \mathcal{S}_{K,L};\\ 0 &\text{otherwise}.\end{cases}
\end{equation}
Here, 
\begin{equation}\label{def tilted slab}
\mathcal{S}_{K,L} := \left\{ (x,t) \in \integers^d \times [0,\infty) \colon \: x \in [-K,K] \times \integers^{d-1} + [Lt,0,\dots, 0] \right\},
\end{equation}
and $(A,0)$  is connected to $ (x,t)$
 within  $\mathcal{S}_{K,L}$ if $(A,0)$ is connected to $(x,t)$ in the graphical representation without using arrows outside $\mathcal{S}_{K,L}$.
We say that $\mathcal{S}_{K,L}$  is a tilted slab if $L \neq 0$, and that it is not tilted if $L=0$.

\begin{prop}[Survival in tilted slabs]\label{prop Grimmett1}
Let $d\geq 2$. 
\begin{description}
\item[i)] For  $\lambda > \lambda_c(d)$, there exist $K(\lambda) \in \nat$, $L(\lambda) > 0$ 
such that for all $K> K(\lambda)$ and $L \in (-L(\lambda),L(\lambda))$,
\begin{equation}\label{eq tilting} \widehat{P} \left( _K^L\xi_t^{\set{o}} \neq \emptyset \quad \forall \: t  \geq 0 \right) > 0. \end{equation}
\item[ii)] There exists $K \in \nat$ such that for all $L>0$,
\begin{equation}\label{eq Markus survival of the fittest}
\lim_{\lambda \rightarrow \infty} \widehat{P} \left( _{K}^L\xi_t^{\set{o}} \neq \emptyset \quad \forall \: t \geq 0 \right) = 1.
\end{equation}

\end{description}
\end{prop}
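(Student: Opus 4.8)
The plan is to derive both statements from the renormalisation underlying the Bezuidenhout--Grimmett slab theorem \cite{BezuidenhoutGrimmettCP1990} (see also \cite{LiggettSIS1999}), adapted to a drifting window. Recall that for $\lambda>\lambda_c(d)$ and any $\epsilon>0$ one can choose spatial scales $N_1,N_2\in\nat$ (the first in the $e_1$-direction and the second in the remaining directions), a time scale $T>0$, a half-width $m_0=m_0(\lambda,\epsilon)\in\nat$, and a family of ``good'' events attached to renormalised space--time blocks of $e_1$-extent of order $N_1$, transverse extent of order $N_2$ and temporal extent $T$, such that: (a) each good event is measurable with respect to the graphical representation restricted to its block, so only arrows lying inside the slab $[-m_0N_1,m_0N_1]\times\integers^{d-1}$ are used and blocks sufficiently far apart are independent; and (b) along an oriented path of blocks whose good events all occur, the contact process remains infected and passes a suitable seed from each block to the next. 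Consequently the contact process confined to that slab dominates from below a finitely dependent oriented percolation on $\{-m_0,\dots,m_0\}\times\integers^{d-1}\times\nat_0$ of density at least $1-\epsilon$, and taking $\epsilon$ small enough that this percolation survives recovers survival in the (non-tilted) slab.

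For part~(i), I would fix $\lambda>\lambda_c(d)$, run the construction above with $\epsilon$ small, and set $K(\lambda):=(m_0+2)N_1$ and $L(\lambda):=N_1/T$. Given $K>K(\lambda)$ and $|L|<L(\lambda)$, lay out the renormalised blocks along a staircase tracking the drift: the block with coordinates $(i,j,k)\in\{0,\dots,2m_0\}\times\integers^{d-1}\times\nat_0$ is the usual block translated in the $e_1$-coordinate by $c_kN_1$, where $c_k:=\lfloor LkT/N_1\rfloor$. Because $|L|T/N_1<1$ we have $|c_{k+1}-c_k|\leq1$, so oriented-percolation paths (which advance by at most one renormalised step per time unit) can follow the staircase; and because the window $[-K+Lt,K+Lt]$ moves by at most $|L|T<N_1\leq K$ over one time step while a block has $e_1$-footprint of order $N_1$, each good event uses only arrows inside $\mathcal{S}_{K,L}$ once $K\geq(m_0+2)N_1$. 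Hence the contact process inside $\mathcal{S}_{K,L}$ dominates a finitely dependent oriented percolation of density at least $1-\epsilon$ on the drifting tube $\{(i,j,k):0\leq i\leq 2m_0\}+(c_k,0,\dots,0)$, which survives by the choice of $\epsilon$; starting it from a good block containing the origin (possible since $0\in[-K,K]$) gives \eqref{eq tilting}.

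For part~(ii), note that $d\geq2$ provides at least one \emph{unrestricted} coordinate direction, and exploit the fast spreading of the contact process at large $\lambda$. Fix $L>0$ and a suitable fixed $K\in\nat$, and discretise time with mesh $\delta=\delta(\lambda)$ satisfying $\delta\to0$ and $\lambda\delta\to\infty$ (for instance $\delta=\lambda^{-1/2}$); then, uniformly over sites, with probability at least $1-\psi(\lambda)$, where $\psi(\lambda)\to0$, an occupied site stays occupied over one step and within it sends an arrow to each of its $2d$ neighbours. Iterating over $N$ consecutive steps ($N$ a large fixed integer) produces renormalised blocks of temporal extent $T':=N\delta$ and transverse extent $N$ for which the good event ``from an occupied site near the block's input, produce occupied sites near the inputs of the neighbouring blocks, using arrows only within distance $N$ in the unrestricted directions and distance $1$ in the $e_1$-direction'' has probability at least $1-\psi'(\lambda)$ with $\psi'(\lambda)\to0$, once $\lambda$ is large enough that $LT'=LN\delta<1$ (so one $e_1$-step per block absorbs the drift). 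This yields a finitely dependent oriented percolation on $\integers^{d-1}\times\nat_0$ of density at least $1-\psi'(\lambda)$, living entirely inside $\mathcal{S}_{K,L}$. As $\lambda\to\infty$ the density tends to $1$; for such percolations the probability that the cluster of the origin is infinite tends to $1$ (within a bounded number of generations the cluster fills a wide transverse interval with probability tending to $1$, after which extinction is super-exponentially unlikely), and a single infected origin likewise seeds a good initial block with probability tending to $1$. Together these give \eqref{eq Markus survival of the fittest}, uniformly in $L>0$.

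I expect the main obstacle in part~(i) to be the bookkeeping confirming that the Bezuidenhout--Grimmett good events really can be localised inside the \emph{moving} window and that their oriented-percolation comparison is unaffected by the small drift; this is essentially a re-reading of their construction with the tilt in mind, the only genuinely new input being the elementary fact that oriented percolation confined to a tube drifting at renormalised slope strictly below the maximal path speed still percolates. In part~(ii) the delicate points are making the block estimate uniform as $\lambda\to\infty$ and upgrading ``density tends to $1$'' to ``survival probability tends to $1$'', both of which are standard once the scales are chosen as above.
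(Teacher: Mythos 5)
Your part (i) is essentially the paper's argument: the paper likewise adapts the Bezuidenhout--Grimmett renormalisation (in Liggett's formulation, via \cite[Proposition 2.22]{LiggettSIS1999}), arranges the renormalised blocks in a tilted fashion, and compares the contact process restricted to $\mathcal{S}_{K,L}$ with a supercritical oriented percolation on an explicitly tilted graph (tilt ratio $\frac{l}{l+1}<1$, percolation for density close to $1$ via \cite[Theorem B26]{LiggettSIS1999}), with $K$ and $L$ expressed in terms of the block scales exactly as in your staircase bookkeeping, so the two differ only in presentation. Part (ii) is where you genuinely diverge. The paper does not build a new renormalisation at large $\lambda$: it couples the processes with rates $\lambda$ and $3\lambda$ (a time change together with monotonicity in the recovery rate) to show that if the block estimate holds for $\lambda$ with space/time constants $a,b$, then it holds for $3\lambda$ with constants $a$ and $b/3$; hence, keeping the spatial constant (and so $K$) fixed, the admissible tilt $L=\frac{l}{l+1}\frac{2a}{5b}$ can be made arbitrarily large by increasing $\lambda$, and since for fixed $K,L$ the block probability tends to $1$ as $\lambda\to\infty$, so does the survival probability. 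Your alternative---discretising time at mesh $\delta(\lambda)$ with $\delta\to0$ and $\lambda\delta\to\infty$, so that occupied sites retain their infection and infect all neighbours within one step with probability tending to $1$, and absorbing the drift by one $e_1$-step per block once $LN\delta<1$---is sound and more self-contained, at the price of setting up and verifying a second, large-$\lambda$ block estimate localised in the moving window and upgrading ``density tending to $1$'' to ``survival probability tending to $1$'' from a single seed; the paper's scaling trick gets both of these for free from the part (i) machinery, while your route avoids the rate-$3\lambda$ coupling and makes the uniformity in $L$ completely explicit. I see no genuine gap in either part at the level of detail of the paper's own sketch.
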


The proof of survival in non-tilted slabs proceeds via a block argument and comparison with a certain (dependent) oriented percolation model. As pointed out by \citet{BezuidenhoutGrimmettCP1990}, there is a certain freedom in the spatial location of these blocks. The proof of Proposition \ref{prop Grimmett1} is achieved by adapting the proof of \cite{BezuidenhoutGrimmettCP1990} in a way where the blocks are organised in a tilted way.
A sketch of the proof of Proposition \ref{prop Grimmett1} is given at the end of this section.

Using that infections can spread fast in a small time interval, we note that also the following corollary of Proposition \ref{prop Grimmett1} holds. 
\begin{cor}\label{cor Grimmett1}
Let $\lambda > \lambda_c(d)$ and consider the same parameters as in Proposition \ref{prop Grimmett1}. Then, for any $\delta>0$ small enough there is an $\epsilon>0$, depending on all parameters, such that for all $(x,t) \in S_{K,L}$ with $(x,t-\delta) \in S_{K,L}$, we have that  
$\widehat{P} \left( _K^L\xi_t^{[-K,K]\times \mathbb{Z}^{d-1}}(x)=1 \right) > \epsilon,$
and $\epsilon=\epsilon(\lambda)$ approaches $1$ as $\lambda \rightarrow \infty$.
If $L=0$, then the claim also holds for $\delta=0$.
\end{cor}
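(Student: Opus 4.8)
The plan is to read the corollary off the block construction that underlies Proposition~\ref{prop Grimmett1}, combined with a short‑time flooding estimate: survival in the (possibly tilted) slab delivers an occupied site inside a macroscopic block near $(x,t-\delta)$ with probability bounded below, and then one routes the infection from that site up to the single point $x$ within the remaining short time. For $L=0$ (and $\delta=0$) the flooding step can even be skipped in favour of self‑duality.

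First I would unpack the proof of Proposition~\ref{prop Grimmett1}. That proof proceeds, in the usual Bezuidenhout--Grimmett manner \cite{BezuidenhoutGrimmettCP1990}, by showing that the truncated process ${}_K^L\xi^{\,\cdot}$, restarted on a coarse space--time grid, stochastically dominates a supercritical oriented percolation on a renormalised lattice, with percolation parameter bounded away from criticality and, by part~ii) of that proposition, tending to $1$ as $\lambda\to\infty$. Starting ${}_K^L\xi^{\,\cdot}$ from the \emph{full} cross‑section $[-K,K]\times\mathbb{Z}^{d-1}$ corresponds to starting this oriented percolation from its entire bottom line; by translation invariance in the $\mathbb{Z}^{d-1}$‑directions, every renormalised vertex is then connected to the bottom line with probability at least some $\theta=\theta(\lambda)>0$, uniformly in the vertex, with $\theta(\lambda)\to1$ as $\lambda\to\infty$. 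Translating back, there are $r,\tau\in(0,\infty)$, depending on $K,L,\lambda$, so that for every $(y,s)\in\mathcal{S}_{K,L}$ lying sufficiently deep inside the slab,
\[
\widehat{P}\Bigl(\exists\,(y',s')\in\mathcal{S}_{K,L}:\ \norm{y'-y}_1\le r,\ s-\tau\le s'\le s,\ {}_K^L\xi_{s'}^{[-K,K]\times\mathbb{Z}^{d-1}}(y')=1\Bigr)\ \ge\ \theta .
\]
I would apply this with $(y,s)=(x,t-\delta)$. Requiring $\delta$ to be ``small enough'' is exactly what guarantees that $(x,t-\delta)$, together with an $(r,\tau)$‑neighbourhood of it, sits comfortably inside $\mathcal{S}_{K,L}$; in the tilted case this amounts to a constraint of the form $|L|\delta\le 2K$ minus a margin, whereas for $L=0$ the cross‑section does not move and $\delta=0$ is admissible.

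Next I would route the infection. Conditionally on the above event with witness $(y',s')$, consider the event $F$ that over $[s',t]$ the infection from $y'$ propagates along a fixed monotone path of at most $r$ nearest‑neighbour steps ending at $x$, with no recovery marks on the sites of that path and with the whole path contained in $\mathcal{S}_{K,L}$; this is geometrically feasible because $x$ and $y'$ lie comfortably inside the slab and $t-s'\in[\delta,\delta+\tau]$ is bounded away from $0$ and $\infty$. Since the relevant arrows fire at rate $\lambda$ and recoveries at rate $1$, a direct computation gives $\widehat{P}(F\mid\cdot)\ge \epsilon'=\epsilon'(K,L,\lambda,\delta)>0$ with $\epsilon'(\lambda)\to1$ as $\lambda\to\infty$, and on $F$, by attractiveness, ${}_K^L\xi_t^{[-K,K]\times\mathbb{Z}^{d-1}}(x)=1$. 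Multiplying the two bounds yields the claim with $\epsilon:=\theta\epsilon'$, and $\epsilon(\lambda)\to1$. For $L=0$ one may instead bypass the flooding step by self‑duality of the truncated slab process, identifying $\widehat{P}\bigl({}_K^0\xi_t^{[-K,K]\times\mathbb{Z}^{d-1}}(x)=1\bigr)$ with the probability that the dual truncated slab process started from $\{x\}$ is non‑empty at time $t$, which is at least its forever‑survival probability and hence positive by Proposition~\ref{prop Grimmett1} (and tends to $1$ as $\lambda\to\infty$ by part~ii)).

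The hard part will be the first step: extracting from the renormalisation behind Proposition~\ref{prop Grimmett1} a genuinely \emph{uniform, quantitative} lower bound $\theta$ on block occupancy — uniform over all admissible $(x,t)$, in particular those close to the moving spatial boundary of the tilted slab and at every height $t$ (including small $t$, where one falls back on the base being occupied) — together with the asymptotics $\theta(\lambda)\to1$. The qualitative survival statement alone does not suffice; one has to revisit the block construction and track how the comparison‑percolation density depends on $\lambda$, which is precisely why the corollary is phrased as a by‑product of, and with the same parameters as, Proposition~\ref{prop Grimmett1}.
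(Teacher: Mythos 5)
Your two-step skeleton --- produce an occupied site near $(x,t-\delta)$ from survival in the slab, then push the infection to $(x,t)$ over the remaining short time with a probability bounded below uniformly --- is exactly the paper's argument, so in essence you have the right proof. Where you diverge is in how the first step is made uniform: you propose to re-enter the Bezuidenhout--Grimmett renormalisation and extract a quantitative block-occupancy density $\theta(\lambda)$, and you flag this as the hard part. The paper avoids this entirely: since the process starts from the \emph{full} cross-section $[-K,K]\times\mathbb{Z}^{d-1}$ and $\mathcal{S}_{K,L}$ is translation invariant in the $d-1$ unbounded directions, one may assume $x=(x_1,0,\dots,0)$, and Proposition \ref{prop Grimmett1}i) already gives a ($t$-uniform, by monotonicity/duality within the slab) positive probability that \emph{some} site of the finite column cross-section $[-K+L(t-\delta),K+L(t-\delta)]\times\{0\}^{d-1}$ is occupied at time $t-\delta$; the Markov property and the finiteness of that set then give the uniform spreading bound. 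So your heavier renormalisation step buys a more quantitative $\theta(\lambda)$ but is not needed for the statement. Your self-duality treatment of the $L=0$, $\delta=0$ case is a clean alternative route (the paper leaves that case essentially unexplained), modulo the small observation that the dual survival probability from an arbitrary $x$ in the slab is bounded below uniformly in $x_1$ by first steering to the central column in at most $K$ steps.

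One concrete inaccuracy: the routing event $F$ you use (a fixed monotone path of length $\le r$ with \emph{no recovery marks on its sites throughout} $[s',t]$, where $t-s'\ge\delta$) has probability of order $e^{-(r+1)(\delta+\tau)}$ times an arrow term, so $\epsilon'(\lambda)$ does \emph{not} tend to $1$ as $\lambda\to\infty$ for fixed $\delta$: recoveries fire at rate $1$ independently of $\lambda$. The asymptotics $\epsilon(\lambda)\to1$ therefore cannot come from $\theta\cdot\epsilon'$ with this event; one should instead perform the final spreading in a time window $h=h(\lambda)\to0$ with $\lambda h\to\infty$ (or appeal to the fact that for large $\lambda$ the local density near $x$ just before time $t$ tends to $1$, using Proposition \ref{prop Grimmett1}ii)). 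This is a repairable slip --- and, to be fair, the paper's own proof is silent on the $\lambda\to\infty$ claim as well --- but as written your argument only yields $\epsilon>0$, not $\epsilon(\lambda)\to1$.
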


\begin{proof}
First note that, since the process is started from all sites in $[-K,K]\times \mathbb{Z}^{d-1}$ occupied and the space-time slab  $\mathcal{S}_{K,L}$ is translation invariant in all coordinate directions besides the first one, we may assume w.l.o.g.\ that $x=(x_1,0,\dots,0)$. Further, by Proposition \ref{prop Grimmett1}i), we have that 
\begin{align*}\widehat{P} \left( _K^L\xi_{t-\delta}^{[-K,K]\times \mathbb{Z}^{d-1}}(y)=1 \right) > 0 \text{ for some }y \in [-K+ L(t-\delta),K +L(t-\delta)] \times \{0,\dots,0\}.\end{align*} The claim thus follows by the Markov property and since an infection at $(y,t-\delta)$ can spread to each $(x,t)$ with $x \in [-K +Lt, K+Lt] \times \{0,\dots, 0\}$ with positive probability. In particular, since $x$ is such that  also $(x,t-\delta) \in S_{K,L}$ and the set $[-K +Lt,K +Lt] \times \{0,\dots, 0\}$ is finite,  this probability is uniformly bounded away from $0$.
\end{proof} 
We next present the proof of \eqref{eq nontrivial rho} in Theorem \ref{prop contact process} for the case $d\geq 2$, assuming Proposition \ref{prop Grimmett1} to be true.

\begin{proof}[Proof of \eqref{eq  nontrivial rho} for $d \geq 2$]
 We consider first the case where $u_0 \neq o$, for which we do not need the notion of tilted slabs in order to prove the l.h.s.\ of \eqref{eq  nontrivial rho}. Moreover, in this case, by translation invariance of the contact process, we assume w.l.o.g.\ that $u_0 \cdot e_1 <0$. Let $\lambda > \lambda_c(d)$, and  
let $K \in \nat$ be  such that  Proposition \ref{prop Grimmett1}i) is satisfied with $L=0$. Partition $\integers^d\times [0,\infty)$ into slabs $\Pi_i = 2Ki + \mathcal{S}_{K,0}$, $i\in\integers$, 
and consider $(\zeta_t^{(i)})_{i \in \integers}$ consisting of independent copies of  the process $(_K^0 \xi_t^{[-K,K]\times \mathbb{Z}^{d-1}})$. Further, denote by $(\zeta_t)$ the process on $\Omega$ where, for $(x,t) \in \Pi_i$, we set $\zeta_t(x) = \zeta_t^{(i)} (\theta_{2Ki\cdot e_1}x)$. 

We next specify the family of Boolean functions $(f_k)_{k \geq 0}$. 
Let $R_0=1$, recall that $S_0=o$, and define iteratively for $k \geq 0$,
\begin{align}
f_k:= &\begin{cases} \zeta_{J_{k}}(S_{k}) \qquad&\text{if } (S_{k},J_{k}) \in \Pi_i \text{ for some } i< R_{k};\\
 0&\text{otherwise}.\end{cases}
\\ R_{k+1}:= &\begin{cases} i \qquad&\text{if } (S_{k},J_{k}) \in \Pi_i \text{ for some } i< R_{k};\\
 R_k&\text{otherwise}.\end{cases}
\end{align}
That is, $f_0= \zeta_{J_0}(o)$ and $R_1=0$. Further, for arbitrary $k$, $R_k$ records the label of the leftmost slab the random walk has ``observed'' at jump times $J_0,\dots, J_{k}$. If, at a jump time, the random walk  finds itself inside a slab which is at the left of all the slabs it previously has observed, then, by the definition of $f_k$, the random walk ``observes'' the environment. Otherwise, $f_k=0$, and the random walk acts as if it had seen a vacant site.
In particular, 
 we have that $\rho^{(d)}(k, \zeta)\leq \rho(k, \zeta)$, where $\rho^{(d)}(k,\zeta)= \sum_{i=0}^{k-1} f_i$ and $\rho(k,\zeta)$ is as in Section \ref{sec construction}. As in the $d=1$ case, and since $(\zeta_t) \leq (\xi_t)$, it is  sufficient to show that there exists $\rho^{(d)} >0$ such that
\begin{equation}\label{eq positive density 2}
\liminf_{k \rightarrow \infty} \frac{1}{k} \rho^{(d)}(k,\zeta) \geq \rho^{(d)} \quad \widehat{\Prob}_{\delta_{\bar{1}},o}-a.s.
\end{equation}
For this, let $T_0=0$ and, for $k \geq 1$, let
 \begin{equation}
 T_k := \inf \{ n > T_{k-1} \colon  R_n< R_{n-1}\}
 \end{equation}
denote the $k$'th jump time at which the random walk is in a slab to the left of the origin which it previously has not observed, and let  
 \begin{equation}\label{eq tau}
 \tau_k := T_k - T_{k-1}, \quad k\ge1,
 \end{equation}
be  the number of jumps it takes before the random walk observes a new slab.
By Corollary \ref{cor Grimmett1} (which holds with $\delta=0$ since $L=0$), at the times $(\tau_k)$, the random walk has a positive probability to observe an occupied site, uniform in $k$. We conclude \eqref{eq positive density 2}, since the times $\tau_k$ have finite mean, uniformly in $k$. Indeed, the latter follows since $u_0 \cdot e_1 < 0$ and by Chebyshev's inequality, using that $(W_t)$ has finite second moments.\medskip

We continue with the case $u_0 =o$, for which we need to make certain modifications to the approach above. Choose $L>0$ and $K \in \nat$ such that Proposition \ref{prop Grimmett1}i) holds and partition $\mathbb{Z}^d \times [0,\infty)$ into tilted slabs $\tilde{\Pi}_i = 2Ki + \mathcal{S}_{K,L}$. For 
$i \in \integers$, denote by $(\tilde{\zeta}_t^{(i)})_{i \in \integers}$ independent copies of  the process $(_K^L \xi_t^{[-K,K]\times \mathbb{Z}^{d-1}})$. Further, denote by $(\tilde{\zeta}_t)$ the process on $\Omega$ where, for $(x,t) \in \tilde{\Pi}_i$, we set $\tilde{\zeta}_t(x) = \tilde{\zeta}_t^{(i)} (\theta_{2Ki\cdot e_1}x)$.
Next, let $\tilde{R}_0=1$ and recall that $S_0=o$. Fix $\delta>0$ small such that Corollary \ref{cor Grimmett1} holds, and define iteratively (for $k\geq 0$),
\begin{align}
f_k:= &\begin{cases} \tilde{\zeta}_{J_{k}}(S_{k}), &\text{if } \{S_{k}\} \times [J_{k} -\delta,J_{k}) \in \tilde{\Pi}_i \text{ for some } i<\tilde{R}_{k};\\
 0, &\text{otherwise}.\end{cases}
 \\ \tilde{R}_{k+1}:= &\begin{cases} i \qquad&\text{if } (S_{k},J_{k}) \in \tilde{\Pi}_i \text{ for some } i< \tilde{R}_{k};\\
 R_k&\text{otherwise}.\end{cases}
\end{align}
Next, define the variables $\tilde{T}_k$ and $\tilde{\tau}_k$ similar to the non-tilted case, by replacing $R$ by $\tilde{R}$ and $T$ by $\tilde{T}$. 
Note that, by our choice of $\delta>0$, with strictly positive probability (uniformly in $k$), it holds that $\tilde{\zeta}_{J_{\tau_k}}(S_k)=1$. 
It hence suffices to show that the times $\tau_k$ have finite mean, which implies that \eqref{eq positive density 2} holds and thus the l.h.s.\ of \eqref{eq nontrivial rho}. 
To see that this indeed is the case, first note that, since the jump times $(N_t)$ are continuous,  each time the random walk enters a new slabs there is the possibility that it satisfies $ \{S_{k}\} \times [J_{k} -\delta,J_{k}) \in \Pi_i $. Lastly, the number of new slabs that $(W_t)$ observes is a positive fraction of its jumping times. This follows similarly as for the case that $u_0\neq o$, by using that now $u_0 \cdot e_1 = 0$ and that $L>0$. In particular, by again using Chebyshev's inequality we obtain sufficient estimates on the time it takes until a new slab is observed. By this we conclude  that the times $\tau_k$ have finite mean, and thus the l.h.s.\  of \eqref{eq nontrivial rho}.  \medskip

Note that the approach with tilted slabs also applies in the case when $u_0\cdot e_1<0$ (with the same $K$ and $L$). In order to  conclude $\rho(\lambda) \rightarrow 1$ as $\lambda \rightarrow \infty$ for both cases, we argue based on this approach. First note that, by Proposition \ref{prop Grimmett1}ii), we may take $L$ large (keeping $K$ fixed) by choosing $\lambda$ large enough. Consequently, we may chose $L$ large and $\delta>0$ small so that the times $(\tilde{\tau}_k)$ have mean bounded from above by $1+c$, uniformly in $k$ and for any fixed $c>0$. Subsequently, for each such $K,L$ and $\delta$, the probability $\tilde{\zeta}_{J_{\tau_k}}(S_k)=1$ can be made arbitrary close to $1$, uniformly in $k$, by again tuning $\lambda$ large. This yields the proof of \eqref{eq nontrivial rho} for $d\geq 2$ and thus concludes the proof of Theorem \ref{prop contact process}.
\end{proof}


We end with a sketch of the proof of Proposition \ref{prop Grimmett1}.

\begin{proof}[Sketch of the proof of Proposition \ref{prop Grimmett1}]
For the proof of Proposition \ref{prop Grimmett1} we adapt the proof of  \cite[Theorem 1.2.30]{LiggettSIS1999}. The idea is to proceed in the same manner, however, instead of  comparing the survival of the process with the ordinary oriented percolation structure, we compare it to a certain tilted oriented percolation model.

To make this more precise, consider the following sub-graph of $\integers^d \times \integers$. Fix $l \in \nat$. Next, set $V_0 = \set{(0,0,\dots,0)}$ and, for $n \in \nat$, iteratively define 
\[V_{n} =\left\{
\begin{array}{cc} V_{n-1} + \set{(0,l,0,\dots 0,l)} & \text{ if }n\text{ is odd}, \\V_{n-1} + \set{(1,0,\dots ,0,1),(-1,0,\dots ,0,1} & \text{ if }n\text{ is even}.\end{array}\right. \]

Let the directed graph $G=(V,E)$ be given by $V= \bigcup_{i=1}^{\infty} V_i$, and, for any pair $x,y \in V$, the (directed) edge $(x,y) \in E$ if and only if $x \in V_{n-1}$ and $y \in V_n$ for some $n \in \nat$ with
\[ y = \left\{\begin{array}{cc}x + (0,l,0,\dots,0,l) & \text{ if }n\text{ is odd}, \\x + (\pm 1, 0, \dots,0, 1) & \text{ if }n\text{ is even}.\end{array}\right. \]
This produces a tilted oriented percolation graph where the tilting in the second coordinate depends on the ratio $\frac{l}{l+1}< 1$. As for the ordinary oriented percolation model, let each edge be open with probability $p$ independently, otherwise closed. By  \cite[Theorem B26]{LiggettSIS1999}, 
when $p$ is large enough (depending on $l$), the origin lies in an infinite connected component of open edges with positive probability.

The proof of the first part of Proposition \ref{prop Grimmett1} now follows similar as the proof of Theorem 2.23 in \cite{LiggettSIS1999}, by constructing a coupling between the contact process and the oriented percolation model on the above defined graph $G$. Thus, by choosing $\epsilon>0$ in \cite[Proposition 2.22]{LiggettSIS1999}  small enough,  with positive probability $G$ percolates when  edges are open with probability $p=1-\epsilon$. By the coupling construction in \cite{LiggettSIS1999}, this also holds for the contact process, depending only on the graphical representation within $\mathcal{S}_{K,L}$, with
 $K= 5a+2a\frac{l}{l+1}$ and $L = \frac{l}{l+1} \frac{2a}{5b}$ . This completes the proof of the first part.   

The second part follows by monotonicity in $\lambda$. By a standard coupling procedure we may couple the systems with infection rates $\lambda$ and $3\lambda$ such that, if  \cite[Proposition 2.22]{LiggettSIS1999} is true for $\lambda$ and constants $a$ and $b$, then it also holds for the system with infection rate $3\lambda$ and constants $a$ and $b/3$. Hence, given the constants $a$ and $b$ for a fixed $\lambda$, by letting $\lambda$ converge towards infinity we may take $b$ as small as we wish. In particular, we may choose $L$ large and still satisfy Equation \eqref{eq tilting}. On the other hand, for fixed $K,L >0$, the probability in \cite[Proposition 2.22]{LiggettSIS1999} converges to $1$ as $\lambda \rightarrow \infty$, and therefore \eqref{eq Markus survival of the fittest} follows. 
\end{proof}

\subsection*{Acknowledgments}
The authors are indebted to Rob van den Berg for kind support during various stages of this project.
They are further grateful to Frank den Hollander, Renato dos Santos and Florian V\"ollering for fruitful discussions, and to the referees for valuable comments and suggestions.
This work is supported by the Netherlands Organization for Scientific Research (NWO).

\section*{References}

\end{document}